\newtheorem{theorem}{Theorem}[section]
\newtheorem{prop}[theorem]{Proposition}
\newtheorem{lemma}[theorem]{Lemma}
\newtheorem{cor}[theorem]{Corollary}
\newtheorem{conjecture}[theorem]{Conjecture}
\theoremstyle{definition}
\newtheorem{defn}[theorem]{Definition}
\newtheorem{claim}[theorem]{Claim}
\newcommand*{\abs}[1]{\lvert #1\rvert}
\newcommand{\ep}{\varepsilon}
\title{Balanced subdivisions of cliques in graphs}
\author{
	Bingyu Luan\thanks{School of Mathematics, Shandong University, China. Email: {\tt  byluan@mail.sdu.edu.cn, ghwang@sdu.edu.cn}. B.L. and G.W. were supported by Natural Science Foundation of China (11871311), Young Taishan Scholars Program
	and seed fund program for international research cooperation of Shandong University.}
	\and
	Yantao Tang\thanks{Zhongtai Securities Institute for Financial Studies, Shandong University, China. Email: {\tt yttang@mail.sdu.edu.cn}.}
	\and
	Guanghui Wang\footnotemark[1]
	\and
	Donglei Yang\thanks{Data Science Institute, Shandong University, China. Email: {\tt dlyang@sdu.edu.cn}. Supported by the China Postdoctoral Science Foundation (2021T140413), Natural Science Foundation of China (12101365) and Natural Science Foundation of Shandong Province (ZR2021QA029).}
}
\date{}
\begin{document}
\maketitle
\begin{abstract}
   Given a graph $H$, a balanced subdivision of $H$ is a graph obtained from $H$ by subdividing every edge the same number of times. In 1984, Thomassen conjectured that for each integer $k\ge 1$, high average degree is sufficient to guarantee a balanced subdivision of $K_k$. Recently, Liu and Montgomery resolved this conjecture. We give an optimal estimate up to an absolute constant factor by showing that there exists $c>0$ such that for sufficiently large $d$, every graph with average degree at least $d$ contains a balanced subdivision of a clique with at least $cd^{1/2}$ vertices. It also confirms a conjecture from Verstra{\"e}te: every graph of average degree $cd^2$, for some absolute constant $c>0$, contains a pair of disjoint isomorphic subdivisions of the complete graph $K_d$. We also prove that there exists some absolute $c>0$ such that for sufficiently large $d$,  every $C_4$-free graph with average degree at least $d$ contains a balanced subdivision of the complete graph $K_{cd}$, which extends a result of Balogh, Liu and Sharifzadeh.
\end{abstract}

\section{Introduction}

Given a graph $H$, a \emph{subdivision} of $H$, denote by $\mathsf{T}H$, is a graph obtained from $H$ by subdividing some or all of its edges by drawing new vertices on those edges. In other words, some edges of $H$ are replaced by internally vertex-disjoint paths. The original vertices of $H$ are the \emph{branch vertices} of the $\mathsf{T}H$, and its new vertices are called \emph{subdividing vertices}. Subdivision  plays a central role in topological graph theory since Kuratowski \cite{Kuratowski} showed that a graph is planar if and only if it does not contain a subdivision of a complete graph on five vertices or a subdivision of a complete bipartite graph with three vertices in each partition.

In 1967, Mader \cite{Mader-1} proved that for each integer $k\ge 1$, there exists $c>0$ such that every graph with average degree at least $c$ contains a subdivision of the complete graph $K_k$. For each integer $k\ge 1$, let $d(k)$ be the smallest number such that each graph with average degree at least $d(k)$ contains a subdivision of $K_k$. Mader \cite{Mader-1}, and independently Erd\H{o}s and Hajnal \cite{E-H} conjectured that $d(k)=O(k^2)$. The extremal example, the disjoint union of complete regular bipartite subgraphs, first observed by Jung \cite{Jung}, gives a lower bound of $d(k)$, which matches the conjecture. Later, Bollob{\'a}s and Thomason \cite{B-Th}, independently Koml{\'o}s and Szemer{\'e}di \cite{K-Sz-1,K-Sz-2} confirmed the conjecture in 1990s.

The extremal example suggests that this bound can be improved when some small subgraphs are forbidden. Mader \cite{Mader-3} conjectured that every $C_4$-free graph contains a subdivision of a clique with order linear in its average degree. K{\"u}hn and Osthus \cite{K-O-1,K-O-3} proved that every graph with sufficiently large girth contains a subdivision of a clique with order linear in its minimum degree. They \cite{K-O-2} also showed that each $C_4$-free graph $G$ with average degree $k$ contains a $\mathsf{T}K_{k/\log^{12}k}$. Balogh, Liu and Sharifzadeh \cite{BLS} proved when $k\ge 3$, each $C_{2k}$-free graph contains a subdivision of a clique with order linear in its average degree. In 2016, Liu and Montgomery \cite{LM1} solved this conjecture.

A natural question of subdivision is whether we can control the length of paths which replace the edges. Given an integer $z$, denoted by $\mathsf{T}H^{(z)}$ the subdivision obtained from $H$ by replacing all edges of $H$ with internally vertex-disjoint paths of length $z$, and we also call it a \emph{balanced} $H$-subdivision.
An old question of Erd\H{o}s \cite{E} is, for each $\ep>0$, whether one can find a $\delta>0$ such that every graph with $n$ vertices and at least $\ep n^2$ edges contains a $\mathsf{T}K_{\delta \sqrt{n}}^{(2)}$. Alon, Krivelevich and Sudakov \cite{AKS} showed the existence of $\delta$, and this result was improved to $\delta=\ep$ by Fox and Sudakov \cite{F-S}. Erd\H{o}s' question only considers dense graphs. Generally, Thomassen \cite{Th-1,Th-2,Th-3} gave the following conjecture.

\begin{conjecture}[\cite{Th-3}]\label{conj}
	For every $k\ge 1$, there exists a function $f(k)$, if $\delta(G)\ge f(k)$, then $G$ contains a balanced subdivision of $K_k$.
\end{conjecture}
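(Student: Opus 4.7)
The plan is to prove Thomason's Conjecture via the Komlós–Szemerédi sublinear expander method, the framework underlying \cite{K-Sz-1,K-Sz-2} and the recent Mader-conjecture solution of Liu–Montgomery \cite{LM1}. The first step is a reduction: using the Koml\'os–Szemer\'edi regularisation lemma, pass from $G$ to a subgraph $H\subseteq G$ that is a sublinear expander and whose minimum degree is only a polylogarithmic factor smaller than $\delta(G)$. Inside such an expander one has a \emph{connecting lemma}: any two prescribed vertex sets of reasonable size can be joined by a path of length $O(\mathrm{polylog}|H|)$ avoiding any prescribed small forbidden set of vertices.

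With this machinery, pick $k$ branch vertices $v_1,\dots,v_k$ in $H$ and reserve, for each $i$, a large neighbourhood reservoir $R_i$ around $v_i$, with the $R_i$ pairwise disjoint. Applying the connecting lemma $\binom{k}{2}$ times, greedily, produces internally vertex-disjoint paths $P_{ij}$ between $v_i$ and $v_j$ for every pair, each path starting in $R_i$ and ending in $R_j$ and avoiding vertices used by previous paths. This already yields a (generally unbalanced) $TK_k$ and essentially reproves Mader's theorem.

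The main obstacle, which is the whole content of the \emph{balanced} version, is tuning all $\binom{k}{2}$ paths to a common length $\ell$. My plan is to build in advance $\binom{k}{2}$ pairwise vertex-disjoint \emph{adjusters}, one per index pair, where each adjuster is a small subgraph with two distinguished endpoints carrying two internally disjoint endpoint-to-endpoint paths whose lengths differ by exactly $2$. Arrange every $P_{ij}$ so that it traverses the $(i,j)$-adjuster; then swapping between the adjuster's two internal branches shifts the length of $P_{ij}$ by exactly $2$. After building the rough paths, pad each $P_{ij}$ inside $R_i\cup R_j$ by a short slack extension so that its length becomes either $\ell$ or $\ell-2$, and then flip the appropriate adjusters to absorb the remaining discrepancy; a global parity of $\ell$ can be forced at the outset by adding a single pendant edge at one branch vertex. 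The hardest step, which is where the full strength of the sublinear expansion and of the minimum-degree hypothesis is used, is showing that all $\binom{k}{2}$ adjusters, together with their slack reservoirs, can be embedded simultaneously and disjointly inside $H$ while still leaving enough untouched expansion for the connecting paths. Executed carefully, this strategy should yield $f(k)$ polynomial in $k$, consistent with the quantitative $f(k)=O(k^2)$ bound announced in the abstract.
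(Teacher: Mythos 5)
The central gap is in your Step~6--7, where you assert that each rough path $P_{ij}$ can be ``padded'' by a ``short slack extension'' inside $R_i\cup R_j$ so its length becomes $\ell$ or $\ell-2$, after which a single $\pm 2$ adjuster finishes the job. This is exactly where the difficulty of the problem lives, and the claim is unjustified: the connecting lemma produces paths whose lengths vary over a range of order $\mathrm{polylog}\,n$, and there is no generic way in a sparse expander to grow a path by a prescribed amount while staying inside a bounded reservoir. Liu and Montgomery (and this paper) resolve this by replacing your two-branch adjuster with a $(D,m,r)$-adjuster whose internal paths realise a \emph{long arithmetic progression} of lengths $l, l+2, \ldots, l+2r$ with $r$ comparable to $dm^2$; building such a thing robustly (so that $\binom{k}{2}$ of them can be extracted one after another while avoiding all previously used vertices) is itself a substantial piece of work, carried out here via chains of simple adjusters linked through the ``octopus'' structure. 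Without an adjuster of that strength, the length discrepancies cannot be absorbed.

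Two secondary issues. First, your parity fix (``add a single pendant edge at one branch vertex'') does not make sense: the parity of a $v_i$--$v_j$ path is a property of the host graph, not something you can alter by attaching a pendant leaf, and it has to hold simultaneously for all $\binom{k}{2}$ pairs. The paper instead passes to a \emph{bipartite} expander and picks branch vertices on the same side, so every $v_i,v_j$-path has even length automatically and the adjuster's step size $2$ matches. Second, your ``neighbourhood reservoirs'' $R_i$ are too weak a device: to guarantee that $k-1$ paths can leave each branch vertex internally disjointly, one needs the richer hub/unit structure (a hub gives each outgoing path its own private fan of exits, and the unit packages $\Theta(\sqrt d)$ such hubs around one core vertex), together with the robustness lemmas that let one build all units up front with pairwise disjoint interiors. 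So while your high-level outline (expander $\to$ branch structures $\to$ connecting lemma $\to$ length adjusters) is the right shape and does match the paper's strategy, the pieces you supply are not strong enough for the key step of equalising the path lengths.
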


Recently, Liu and Montgomery \cite{LM2} confirmed Conjecture \ref{conj}. Later, a result of Wang \cite{WY} implies balanced clique subdivision of order $d^c$ for any $c<1/2$. We give the following theorem, improving Wang's result.

\begin{theorem}\label{thm: balanced subdivision}
	There exists an absolute constant $c>0$ such that every graph with average degree at least $d$ contains a $\mathsf{T}K_{c\sqrt{d}}^{(z)}$ for sufficiently large $d$ and some $z\in \mathbb{N}$.
\end{theorem}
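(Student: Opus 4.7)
The plan is to adapt the Komlós--Szemerédi sublinear expander framework together with the ``adjuster'' technology developed by Liu--Montgomery in their proof of Conjecture~\ref{conj}. By a now-standard reduction, one may first pass to a subgraph $H\subseteq G$ which is a sublinear expander with minimum degree $\delta(H)=\Omega(d)$, while preserving the average degree up to a constant factor. All further work takes place inside $H$, so the target becomes: find a balanced $TK^{(\ell)}_{c\sqrt{d}}$ in a sublinear expander of minimum degree $\Omega(d)$.

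Next I would greedily select $k=c\sqrt{d}$ candidate branch vertices $v_1,\dots,v_k$, each equipped with a pairwise disjoint \emph{unit} $U_i$: a small rooted gadget built from the neighbourhood of $v_i$ that supplies $k-1$ internally disjoint ``exits'' from $v_i$, together with a small \emph{adjuster}, a subgraph consisting of a chain of short ears whose lengths can be toggled so that the distance between two designated terminals takes every value in some interval $[\ell_0,\ell_0+K]$. Since $\delta(H)=\Omega(d)$ and $k-1=O(\sqrt{d})$, a unit of size $O(\sqrt{d})$ around each $v_i$ suffices, and the $U_i$ can be built one at a time, each time avoiding only a vanishing fraction of $H$.

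The heart of the proof is the connection step. For every pair $\{i,j\}$, using sublinear expansion one finds a short path from the ``outer boundary'' of $U_i$ to the outer boundary of $U_j$, avoiding all vertices used by previous connections and the interiors of all other units. The two adjusters at the endpoints are then used to inflate the concatenated path to a common target length $\ell$, which is chosen in advance so that it lies in the adjustable interval of every pair. Iterating over the $\binom{k}{2}$ pairs in any order, and arguing that after each step every unit has lost only a constant fraction of its exits and its adjuster still covers a long length interval, one obtains $\binom{k}{2}$ internally disjoint $v_i$--$v_j$ paths of the same length $\ell$, which is exactly a balanced subdivision $TK^{(\ell)}_k$.

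The main obstacle, and the source of the quantitative gain over Wang's $\sqrt{d}/\log^{10} n$ bound, is to keep the vertex budget from absorbing a $\mathrm{polylog}(n)$ factor per branch vertex. In the direct Komlós--Szemerédi expansion, the connecting paths have length $\mathrm{polylog}(|H|)$, forcing the units and adjusters to have size $\mathrm{polylog}(|H|)$, which would cap $k$ at $\sqrt{d}/\mathrm{polylog}(n)$. The idea is therefore to \emph{localise} the entire construction inside balls of size $\mathrm{poly}(d)$, independent of $n$, so that the unavoidable polylog factor in path-lengths becomes $\mathrm{polylog}(d)$ rather than $\mathrm{polylog}(n)$, and can be swallowed into the absolute constant $c$. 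Setting up this localisation --- in particular, producing $\Omega(\sqrt{d})$ localised expanding units with enough disjoint capacity to host all $\binom{k}{2}$ connections simultaneously --- is the delicate technical point that the proof will have to address.
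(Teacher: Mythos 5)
Your proposal correctly identifies the high-level framework the paper uses: pass to a Koml\'os--Szemer\'edi sublinear expander, build disjoint ``units'' around prospective branch vertices, and connect them pairwise by paths of a common length using Liu--Montgomery ``adjusters'' to fine-tune lengths. You also correctly pinpoint the quantitative obstacle that distinguishes the present result from Wang's earlier bound: the $\mathrm{polylog}(n)$ factors arising from expansion-based connection would, if left uncontrolled, cap the clique order at $\sqrt{d}/\mathrm{polylog}(n)$.

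However, the mechanism you propose to overcome this --- ``localise the entire construction inside balls of size $\mathrm{poly}(d)$, independent of $n$'' --- is not a workable plan, and it is not what the paper does. Sublinear expansion is a global property (Definition~\ref{def-sublinear-expander} gives expansion for all sets up to size $|G|/2$, with ratio $\ep_1/\log^2(\cdot)$); nothing forces a $\mathrm{poly}(d)$-sized ball to inherit an expander structure, and the connecting-path lengths in Lemma~\ref{lem: small-diameter-lemma} genuinely scale as $\mathrm{polylog}(n)$. The paper instead resolves the bottleneck by a \emph{case split} on whether $d\ge \log^{s}n$ or not (with $s=240$). In the sparse regime $d<\log^s n$, one invokes Wang's Lemma~\ref{lem: sparse}, which yields a $TK_{cd}^{(\ell)}$ or $TK_{d/2}^{(2)}$ outright, both larger than $c\sqrt{d}$. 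In the dense regime $d\ge \log^s n$, either $n<Kd$, where the Fox--Sudakov dense result (Lemma~\ref{lem: fox-sudakov}) gives $TK^{(2)}_{\Omega(\sqrt{n})}$, or $n\ge Kd$ and the key parameter $m=\Theta(\log^4(n/d))$ satisfies $m^{60}=O(d)$, so every polylog factor is dominated by a tiny power of $d$ and absorbed into the constant. There is no localisation of the expander itself.

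Two further ingredients your sketch omits are essential. First, \emph{robustness of density}: to greedily place $\Theta(\sqrt d)$ units and $\binom{k}{2}$ connecting paths, one must show the graph retains average degree $\Omega(d)$ after deleting $\Theta(d\cdot\mathrm{polylog})$ vertices; the paper does this via dependent random choice (Lemma~\ref{lem: robust degree}), crucially using a $TK_{\sqrt{d}}^{(2)}$-freeness assumption (otherwise one is already done). Second, the adjusters must themselves be of size $\Theta(dm^4)$ to survive these deletions --- much larger than the ones in Liu--Montgomery --- and this requires the new ``octopus'' construction (Definition~\ref{octopus}, Lemma~\ref{lem: robust simple adjuster}) to chain many small simple adjusters into a large robust one. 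Without both of these, the greedy connection step you describe would run out of room well before completing all $\binom{k}{2}$ paths.
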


Note that the bound above is asymptotically optimal up to a constant factor by considering a disjoint union of complete bipartite graphs $K_{d,d}$. This theorem also resolves the following conjecture from Verstra{\"e}te \cite{V}: every graph of average degree $cd^2$, for some absolute constant $c>0$, contains a pair of disjoint isomorphic subdivisions of the complete graph $K_d$. Indeed, we can divide a balanced subdivision of $K_{d}$, which can be found by Theorem \ref{thm: balanced subdivision}, into two balanced subdivisions of $K_{d/2}$, which are isomorphic.

Mader's conjecture states that every $C_4$-free graph has a subdivision of a clique with order linear in its average degree. Balogh, Liu and Sharifzadeh \cite{BLS} proved that every $n$-vertex $C_4$-free graph with $cn^{3/2}$ edges contains a $\mathsf{T}K_{c'\sqrt{n}}^{(4)}$ for some $c'>0$. We extend their result by giving the following theorem. The bound is also optimal up to a constant factor.

\begin{theorem}\label{thm: c4-free balanced subdivision}
	There exists an absolute constant $c>0$ such that every $C_4$-free graph with average degree at least d contains a $\mathsf{T}K_{cd}^{(z)}$ for sufficiently large $d$ and some $z\in \mathbb{N}$.
\end{theorem}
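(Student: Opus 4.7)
The plan is to combine the Komlós--Szemerédi sublinear expander framework (used heavily in both Mader's conjecture for $C_4$-free graphs by Liu--Montgomery and in Theorem~\ref{thm: balanced subdivision}) with a length-balancing gadget argument, exploiting $C_4$-freeness to boost the clique order from $\sqrt{d}$ up to linear in $d$. First I would replace $G$ by a subgraph $G'$ that is (i) still $C_4$-free, (ii) has minimum degree $\Omega(d)$, and (iii) is a Komlós--Szemerédi sublinear expander. Step (ii) is standard since the average degree is at least $d$, and step (iii) follows from the classical expander-extraction lemma with the loss of only a polylogarithmic factor in degree, which is harmless. The key structural gain from being $C_4$-free is that for any vertex $v$ and radius $r$, the ball $B_r(v)$ has size roughly $d^r$ up to polylog factors, because the codegree-one condition forces the BFS layers to be almost disjoint; this is exactly the reason why the correct clique order is $\Theta(d)$ rather than $\Theta(\sqrt{d})$.

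Second, using the strong expansion together with the fact that $|N(v)| = \Omega(d)$ for every $v$, I would extract a collection $U$ of $\Omega(d)$ candidate branch vertices (``hubs'') whose $\Omega(d)$-sized neighbourhoods are pairwise almost disjoint, so that each hub has a private reservoir of neighbours to launch paths from. Third, I would build the balanced $TK_{cd}$ via an iterative connection lemma: process the $\binom{cd}{2}$ pairs of hubs in some order, and at each step find an internally disjoint path of a prescribed common length $\ell$ joining the current pair and avoiding the previously used paths plus a designated reserve. The expansion of $G'$ minus the already-used vertices is preserved (since the total usage is still a sublinear fraction of $V(G')$), so short paths of approximately the right length exist. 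To \emph{exactly} match the target length $\ell$, I would attach to each hub a family of ``unit adjusters'' (the gadget from the Liu--Montgomery balanced-subdivision paper): small subgraphs in which a traversal can be lengthened by $\pm 1$ within some range without otherwise disturbing the construction. Concatenating a freshly found short path with a controlled number of adjuster traversals then produces a path of the exact length $\ell$.

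The main obstacle will be the construction and management of the adjusters in the $C_4$-free setting at the linear-in-$d$ scale. In the general $\sqrt{d}$ regime of Theorem~\ref{thm: balanced subdivision}, one has the full $d$-dimensional expansion to play with, whereas here I need to build $cd$ hubs, each equipped with enough adjusters to balance up to $\binom{cd}{2}$ paths, and must certify that the adjusters fit inside disjoint small neighbourhoods of the hubs without colliding with one another or with the connecting paths. The $C_4$-free condition helps in that ball-growth is nearly optimal, so each adjuster can be realised inside an $O(\mathrm{polylog})$-radius ball of size $\mathrm{poly}(d)$, leaving ample space, but the bookkeeping—matching the number of adjusters available to the number of length corrections required while keeping everything internally disjoint—is the delicate part. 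Once this is in place, a clean-up step fixes a common final length $\ell$ and outputs the desired $TK_{cd}^{(\ell)}$.
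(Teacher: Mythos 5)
Your high-level plan (pass to a Koml\'os--Szemer\'edi sublinear expander, build a collection of hub/unit structures as branch vertices, connect them with internally disjoint paths, and use Liu--Montgomery-style adjuster gadgets to match all path lengths to a common $\ell$) is indeed the route the paper takes, channelled through the unified Lemma~\ref{thm: dense}. But there are two genuine gaps in the proposed reasoning.

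First, the claim that $C_4$-freeness makes $|B_r(v)|\approx d^r$ for all $r$ ``because the BFS layers are almost disjoint'' is false. The codegree-at-most-one property controls only the first two layers: it gives $|B_2(v)|\gtrsim d^2$ and nothing stronger for $r\ge 3$ (a polarity graph of a projective plane is $C_4$-free, has $\Theta(d^2)$ vertices, and diameter $2$). This misidentifies where the power of $C_4$-freeness actually enters. The correct mechanism, and the real technical innovation here, is to extract the sublinear expander with the parameter $k=\ep_2 d^2$ rather than $k=\ep_2 d$: $C_4$-freeness (via K\H{o}v\'ari--S\'os--Tur\'an) buys you degree robustness under deletion of $d^2 m^{O(1)}$-sized sets and growth up to size $\Theta(d^2)$, at which point the $(\ep_1,\ep_2 d^2)$-expander condition takes over. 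Without pinning down this $d^2$ threshold, the $\Omega(d)$ hubs with pairwise-disjoint private neighbourhoods cannot be certified, and the whole construction collapses back to the $\sqrt{d}$ scale.

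Second, the proposal omits two structural ingredients that cannot be skipped: (a) the dense base case, $n<Kd^2$, where expansion alone is useless and one must invoke the Balogh--Liu--Sharifzadeh result that dense $C_4$-free graphs contain a $TK_{\Omega(\sqrt{n})}^{(4)}$ (Lemma~\ref{lem: c4-free dense}); and (b) the parity issue for balancing. Adjusters change a path's length in steps of $2$, so the residual lengths to be corrected must all share the same parity; the paper arranges this by working inside a bipartite expander and choosing all core/branch vertices in the same colour class. Your write-up never mentions parity and therefore cannot, as stated, ``fix a common final length $\ell$.'' These are fixable, but they are real missing ideas rather than routine bookkeeping.
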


We shall give a unified approach for Theorem \ref{thm: balanced subdivision} and Theorem \ref{thm: c4-free balanced subdivision}. The rest of the paper will be organized as follows. In Section \ref{sec:prelim}, we introduce some necessary notions and the main result (Lemma \ref{thm: dense}) in our proofs. Section \ref{sec:proof1} is devoted to the proof of Lemma \ref{thm: dense} and in Section \ref{sec:dense proof} we discuss three crucial ingredients used for embedding balanced subdivisions.

\section{Preliminaries}\label{sec:prelim}

\subsection{Notation}

Given a graph $G=(V,E)$, we write $\abs{G}=\abs{V(G)}$ for the order of the graph $G$. Let $\delta(G)$ and $d(G)$ be the minimum and average degree of $G$ respectively. For a set of vertices $W\subseteq V(G)$, denote its external neighbourhood by $N(W)=(\cup_{v\in W}N(v))\backslash W$. Denote by $G[W]$ the induced subgraph of $G$ on $W$, and we write $G-W$ for the induced subgraph $G[V(G)\backslash W]$. Given graphs $G$ and $H$, the graph $G\cup H$ has vertex set $V(G)\cup V(H)$ and edge set $E(G)\cup E(H)$. For a collection $\mathcal{P}$ of graphs, denote by $\abs{\mathcal{P}}$ the number of graphs in $\mathcal{P}$, and we write $V(\mathcal{P})=\cup_{G\in \mathcal{P}}V(G)$.

For a path $P$, the \emph{length} of $P$, denoted by $l(P)$, is the number of edges in $P$. Given two vertices $x$, $y$, an $x,y$-path is a path with endvertices $x$ and $y$. When we say $P$ is a path from a vertex set $A$ to a disjoint vertex set $B$, we mean that $P$ has one endvertex in $A$ and another one in $B$, and has no internal vertices in $A\cup B$.

Let $[n]:=\{1,2,\cdots,n\}$. When it is not essential, we omit the floors and ceilings. All logarithms are natural.

\subsection{Koml\'os-Szemer\'edi graph expander}

Koml\'os and Szemer\'edi \cite{K-Sz-1,K-Sz-2} introduced the following sublinear expander, which forms the foundation of our proof.

\begin{defn}[Sublinear expander]\label{def-sublinear-expander}
	For each $\ep_{1}>0$ and $k>0$, a graph $G$ is an \emph{$(\ep_{1},k)$-expander} if
	$$\abs{N(X)}\ge \ep(\abs{X},\ep_1,k)\cdot \abs{X}$$
	for all $X\subseteq V(G)$ of size $k/2\le \abs{X}\le |V(G)|/2$, where
	$$\ep(x,\ep_{1},k):=
	\begin{cases}
		0 & \text{if }x<k/5,\\
		\ep_{1}/\log^2(15x/k) & \text{if }x\ge k/5.
	\end{cases}$$
\end{defn}
Whenever the choices of $\ep_{1},k$ are clear, we omit them and write $\ep(x)$ for $\ep(x,\ep_{1},k)$. Note that $\ep(x,\ep_{1},k)$ decreases as $x$ increases when $x\ge k/2$. Koml\'os and Szemer\'edi \cite{K-Sz-2} showed that every graph $G$ contains a sublinear expander as dense as $G$.

\begin{theorem}[\cite{K-Sz-2}]\label{thm: expander-lemma}
	There exists $\ep_{1}>0$ such that the following holds for every $k>0$. Every graph $G$ has an $(\ep_1 ,k)$-expander $H$ with $d(H)\ge d(G)/2$ and $\delta(H)\ge d(H)/2$.
\end{theorem}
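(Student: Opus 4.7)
The plan follows the original inductive argument of Koml\'os and Szemer\'edi: I will build the required expander by iteratively splitting $G$ along a witness to non-expansion, then control the accumulated density loss by a telescoping estimate that exploits the $\log^{-2}$ decay in Definition~\ref{def-sublinear-expander}.

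First, I reduce to the minimum-degree condition. The standard greedy fact --- repeatedly delete any vertex of current degree less than half the current average degree --- produces a subgraph $G_0\subseteq G$ with $\delta(G_0)\ge d(G_0)/2$ and, since each deletion strictly increases the average degree, $d(G_0)\ge d(G)$. So it suffices to find an $(\ep_1,k)$-expander subgraph $H\subseteq G_0$ with $d(H)\ge d(G_0)/2$; I will interleave this trim inside the main loop so that the output automatically has $\delta(H)\ge d(H)/2$.

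The main loop maintains a current subgraph $G'$, initially $G_0$. If $G'$ is already an $(\ep_1,k)$-expander, output it. Otherwise, fix a witness $X\subseteq V(G')$ with $k/2\le|X|\le|G'|/2$ and $|N_{G'}(X)|<\ep(|X|,\ep_1,k)\,|X|$, and split
\[
G_1:=G'[X\cup N_{G'}(X)],\qquad G_2:=G'-X.
\]
Every edge of $G'$ lies inside $G_1$ or inside $G_2$, so $e(G_1)+e(G_2)\ge e(G')$; also $|G_1|+|G_2|\le|G'|+|N_{G'}(X)|\le(1+\ep(|X|))|G'|$. Averaging, the side of larger average degree satisfies
\[
d\ \ge\ d(G')\cdot\frac{|G'|}{|G_1|+|G_2|}\ \ge\ d(G')\bigl(1-\ep(|X|)\bigr).
\]
I replace $G'$ by that side, reapply the greedy trim (which only \emph{raises} $d$), and iterate. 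Since $|G'|$ strictly decreases, the procedure terminates at an $(\ep_1,k)$-expander $H$.

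For the density accounting, telescoping gives $d(H)\ge d(G_0)\prod_{j}\bigl(1-\ep(|X_j|)\bigr)$ over the witnesses used. Grouping the steps by the dyadic scale $|X_j|\in[2^{s-1}k,2^sk]$, the individual loss is $\ep(|X_j|)=O(\ep_1/s^2)$; a short combinatorial bookkeeping --- exploiting that large witnesses force large drops in $|G'|$, so only $O(1)$ splits occur per scale on average --- bounds $\sum_j\ep(|X_j|)=O(\ep_1\sum_{s\ge1}s^{-2})=O(\ep_1)$. Choosing $\ep_1$ sufficiently small makes the product at least $1/2$, so $d(H)\ge d(G_0)/2\ge d(G)/2$.

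The principal obstacle is the density accounting: a priori the number of splits is $O(|G|)$, and a crude bound $|G|\cdot\ep_1$ on the total loss is useless. One must genuinely exploit both (i)~the $\log^{-2}$ decay of $\ep$, which makes the per-scale cost decay as $s^{-2}$, and (ii)~the geometric interplay between witness size and the subsequent sizes of $G'$, which bounds the number of splits per scale. The $\log^{-2}$ rate in Definition~\ref{def-sublinear-expander} is calibrated precisely so that $\sum_{s\ge1}s^{-2}<\infty$; any slower decay, such as $\log^{-1}$, would diverge and the argument would break down. A secondary subtlety is that the greedy minimum-degree trim must be interleaved with the splitting without destroying the expansion property established so far --- this is where one uses that trimming removes only low-degree vertices and that, for $\ep_1$ small, the total number of such removals cannot violate the expansion condition, which is already vacuous on sets of size below $k/2$.
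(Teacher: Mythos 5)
Your overall structure is the right one (it is essentially the Koml\'os--Szemer\'edi approach: repeatedly split along a witness to non-expansion, trim for minimum degree, and telescope the density loss), and the reduction to minimum degree and the splitting inequality $d(\cdot)\ge d(G')(1-\ep(|X|))$ are both correct. But the density accounting contains a genuine gap. You claim that ``large witnesses force large drops in $|G'|$, so only $O(1)$ splits occur per scale on average.'' That is only true for the steps in which you keep $G_1=G'[X\cup N(X)]$ (since then $|G'|$ shrinks by roughly a factor $2$). When the denser side is $G_2=G'-X$, the order drops only by $|X|$, which can be as small as $k/2$; if this keeps happening you can accumulate on the order of $|G|/k$ splits all at the lowest dyadic scale, each paying $\ep(|X_j|)\approx\ep_1/\log^2(15/2)$, and then $\sum_j\ep(|X_j|)$ is $\Theta(\ep_1|G|/k)$, not $O(\ep_1)$. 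Your telescoping bound $d(H)\ge d(G_0)\prod_j(1-\ep(|X_j|))$ is valid but far too weak to conclude.

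The missing observation that saves the argument is the asymmetry between the two sides of the split. A short computation with $e(G_1)+e(G_2)\ge e(G')$ and $|G_1|\le|X|(1+\ep(|X|))$, $|G_2|=|G'|-|X|$ shows that if $d(G_1)<d(G')(1-\ep(|X|))$ then necessarily $d(G_2)>d(G')$. So at every step you may either take $G_1$ at a multiplicative cost of $1-\ep(|X|)$ while shrinking $|G'|$ to at most $|X|(1+\ep(|X|))\le\tfrac{1}{2}(1+\ep(|X|))|G'|$, or take $G_2$ at no cost (the average degree increases). The ``$G_2$'' steps therefore contribute nothing to the loss, and the ``$G_1$'' steps form a sequence whose orders decay geometrically from $n$ down to $\Theta(k)$; the corresponding costs $\ep_1/\log^2(15|X_j|/k)$ then telescope to a convergent series $O\bigl(\ep_1\sum_{j\ge1}j^{-2}\bigr)=O(\ep_1)$, which is exactly the point of the $\log^{-2}$ calibration you mention. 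Without isolating this dichotomy (or, equivalently, without running the argument through a suitable penalised potential as Koml\'os and Szemer\'edi do), the per-scale bookkeeping as you have stated it does not close the argument.
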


Note that, in Theorem \ref{thm: expander-lemma}, the sublinear expander $H$ can be much smaller than the original graph $G$. Indeed, $G$ could be the disjoint union of many copies of such a graph $H$.

The following lemma is the key property of sublinear expanders that we will use.  It roughly says that in a sublinear expander, we can connect two sets of vertices using a short path while avoiding a smaller vertex set.

\begin{lemma}[\cite{K-Sz-2}]\label{lem: small-diameter-lemma}
    Let $\ep_{1},k>0$. If $G$ is an $n$-vertex $(\ep_{1},k)$-expander, then any two vertex sets, each of size at least $x\ge k$, are of distance at most $\frac{2}{\ep_{1}}\log^3(\frac{15n}{k})$ apart. This remains true even after deleting $x\cdot \ep(x)/4$ arbitrary vertices from $G$.
\end{lemma}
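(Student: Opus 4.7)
The plan is to run two BFS ball-growths simultaneously, one from each target set $A,B$ inside $G-W$, and to show that within $\frac{1}{\ep_1}\log^{3}(15n/k)$ steps each ball has grown past $n/2$, so the two balls must collide.

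Fix one of the sets, call it $A$, with $|A|\ge x\ge k$, and set $B_{0}=A$, $B_{i+1}=B_{i}\cup (N(B_{i})\setminus W)$ inside $G-W$. The first step is to control one BFS layer. As long as $|B_{i}|\le n/2$, the expander hypothesis gives $|N(B_{i})|\ge \ep(|B_{i}|)|B_{i}|$. The crucial monotonicity observation is that the function $t\mapsto \ep(t)\cdot t = \ep_{1}t/\log^{2}(15t/k)$ is non-decreasing for $t\ge k/5$, so whenever $|B_{i}|\ge x$ we have $\ep(|B_{i}|)|B_{i}|\ge \ep(x)x=4|W|$. Hence $|N(B_{i})\setminus W|\ge \tfrac{3}{4}\ep(|B_{i}|)|B_{i}|$, giving the robust multiplicative growth
\[
|B_{i+1}|\ \ge\ \bigl(1+\tfrac{3}{4}\ep(|B_{i}|)\bigr)|B_{i}|.
\]

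The second step is to pass from per-layer growth to an overall bound on the number of layers needed. I would split the growth into doubling phases: phase $j$ runs from the first index where $|B_{i}|\ge 2^{j}x$ until $|B_{i}|$ first reaches $2^{j+1}x$. Since $\ep$ is monotonically decreasing in its first argument (for values $\ge k/2$), within phase $j$ we have $\ep(|B_{i}|)\ge \ep(2^{j+1}x)=\ep_{1}/\log^{2}(15\cdot 2^{j+1}x/k)$, so the number of layers used in phase $j$ is at most
\[
\frac{\log 2}{\tfrac{3}{4}\ep(2^{j+1}x)}\ \le\ \frac{2}{\ep_{1}}\log^{2}\!\bigl(15\cdot 2^{j+1}x/k\bigr).
\]
Summing over $j=0,1,\dots,\log_{2}(n/(2x))$, and using $x\ge k$ so $\log(n/x)\le \log(n/k)$, the total number of layers needed to reach $|B_{i}|>n/2$ is bounded by $\frac{1}{\ep_{1}}\log^{3}(15n/k)$, up to the implicit constants that I will absorb into the prefactor $\tfrac{2}{\ep_{1}}$ declared in the statement.

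The final step is the meeting argument. Running the same analysis from $B$ inside $G-W$, its ball also surpasses size $n/2$ within $\frac{1}{\ep_{1}}\log^{3}(15n/k)$ steps. Two sets of size exceeding $n/2$ inside the $n$-vertex host cannot be disjoint, so the two BFS trees must share a vertex, yielding an $A$--$B$ path in $G-W$ of length at most $\frac{2}{\ep_{1}}\log^{3}(15n/k)$. I expect the main technical point to be the robustness against $W$: one must verify that the monotonicity of $\ep(t)t$ makes the loss $|W|$ a constant fraction of the neighbourhood gained at every layer (not just the first), so that the multiplicative growth is never extinguished; once that is in place, the doubling-phase telescoping is essentially routine.
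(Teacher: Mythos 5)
The paper does not prove this lemma itself; it is cited verbatim from Koml\'os and Szemer\'edi \cite{K-Sz-2}. Your proposal correctly reproduces the standard argument from that source: grow BFS balls from both sets inside $G-W$, observe that the monotonicity of $t\mapsto \ep(t)t$ keeps the deleted set $W$ from consuming more than a quarter of any layer's expansion, telescope the multiplicative growth into $O(\log(n/k))$ doubling phases, and argue that two balls each of size greater than $n/2$ must meet. The logic is sound, and the constant $\frac{2}{\ep_1}$ is in fact achievable from your estimates once the phase sum is evaluated as an integral rather than bounded term-by-term by its largest summand, so the hand-wave you flag is benign.

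One small imprecision worth noting: you claim $t\mapsto \ep(t)t=\ep_1 t/\log^2(15t/k)$ is non-decreasing for all $t\ge k/5$. Differentiating shows it is non-decreasing only once $\log(15t/k)\ge 2$, i.e.\ $t\ge e^2k/15\approx 0.49k$, and it actually decreases on $[k/5,\,e^2k/15)$. This does not affect your argument, since you only invoke the monotonicity for $t\ge x\ge k$, where it does hold, but the stated range is wrong and could trip up a reader or a more delicate application. A second point you should make explicit rather than leave implicit: the expansion bound $|N(B_i)|\ge\ep(|B_i|)|B_i|$ from Definition~\ref{def-sublinear-expander} requires $|B_i|\le n/2$, so the growth estimate is only applied while the ball is still below half the vertex set; once it crosses $n/2$ the BFS stops and the meeting argument takes over.
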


It is convenient to work on a bipartite graph, so we use the following well known result.

\begin{prop}\label{prop: bipartite}
	Every graph $G$ contains a bipartite subgraph $H$ with $d(H)\ge d(G)/2$.
\end{prop}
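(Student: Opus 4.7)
The plan is to use the classical random-partition (or equivalently max-cut) argument. I would partition $V(G)$ into two sets $A$ and $B$ by placing each vertex independently and uniformly at random into one of the parts, and then let $H$ be the bipartite subgraph of $G$ with vertex set $V(G)$, parts $A$ and $B$, and edge set consisting of those edges of $G$ with one endpoint in each part.

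The main computation is a linearity-of-expectation estimate: every edge $uv\in E(G)$ survives in $H$ precisely when $u$ and $v$ land in different parts, which happens with probability $1/2$. Hence $\mathbb{E}[\abs{E(H)}]=\abs{E(G)}/2$, so there exists at least one partition achieving $\abs{E(H)}\ge \abs{E(G)}/2$. Fixing such a partition and using $\abs{V(H)}=\abs{V(G)}$, I would then conclude
\[
d(H)=\frac{2\abs{E(H)}}{\abs{V(H)}}\ge \frac{\abs{E(G)}}{\abs{V(G)}}=\frac{d(G)}{2},
\]
as required.

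If one prefers a constructive proof, the same bound follows from a greedy max-cut argument: start with any bipartition $(A,B)$, and while some vertex has more neighbours on its own side than on the other side, move it across. This strictly increases the cut size, so the procedure terminates in a bipartition $(A,B)$ in which every vertex $v$ has at least $d_G(v)/2$ neighbours across the cut; summing gives $\abs{E(H)}\ge \abs{E(G)}/2$ and hence $d(H)\ge d(G)/2$. There is no real obstacle here — the statement is standard folklore, and the only thing to be careful about is that we keep the full vertex set $V(G)$ when passing to $H$, so that the denominator in the average-degree computation does not increase.
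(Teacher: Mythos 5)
Your proof is correct. The paper does not actually supply a proof of this proposition---it simply labels it a ``well known result''---and the random-bipartition (equivalently, max-cut) argument you give is exactly the standard folklore proof that the authors are implicitly invoking. Your care to keep $V(H)=V(G)$ so that the denominator in the average-degree computation does not change is the one place people sometimes slip, and you handled it.
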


Combining this proposition with Theorem \ref{thm: expander-lemma}, we get the following corollary immediately.

\begin{cor}\label{cor: bipartite expander}
	There exists $\ep_{1}>0$ such that the following holds for every $k>0$ and $d\in \mathbb{N}$. Every graph $G$ with $d(G)\ge 8d$ has a bipartite $(\ep_{1},k)$-expander $H$ with $\delta(H)\ge d$.
\end{cor}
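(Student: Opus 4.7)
The plan is to derive the corollary by composing Proposition~\ref{prop: bipartite} and Theorem~\ref{thm: expander-lemma} in the correct order. Starting from $G$ with $d(G)\ge 8d$, first I would apply Proposition~\ref{prop: bipartite} to pull out a bipartite subgraph $H_1\subseteq G$ satisfying $d(H_1)\ge d(G)/2\ge 4d$. Then, using the same $k$ provided in the statement, I would apply Theorem~\ref{thm: expander-lemma} to $H_1$ to obtain an $(\ep_1,k)$-expander $H\subseteq H_1$ with $d(H)\ge d(H_1)/2\ge 2d$ and $\delta(H)\ge d(H)/2\ge d$. Because $H$ is a subgraph of the bipartite graph $H_1$, it is itself bipartite, so $H$ is the desired bipartite $(\ep_1,k)$-expander with $\delta(H)\ge d$.

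The only thing to be careful about is the order in which the two tools are applied. If one first extracted an expander and then tried to pass to a bipartite subgraph, the second step would halve the average (and minimum) degree and, more seriously, could destroy the expansion property, since Definition~\ref{def-sublinear-expander} is sensitive to the deletion of edges within $H$. Applying Proposition~\ref{prop: bipartite} first is harmless for the subsequent expander extraction: bipartiteness is preserved under taking subgraphs, and Theorem~\ref{thm: expander-lemma} runs inside $H_1$ without any bipartite-specific assumption. The factor of $8$ in $d(G)\ge 8d$ is exactly tuned so that the two successive halvings of the average degree, followed by the final halving in passing from $d(H)$ to $\delta(H)\ge d(H)/2$, still leave $\delta(H)\ge d$. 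Consequently, no genuine obstacle arises, and the entire argument is a short concatenation of the two previously stated results.
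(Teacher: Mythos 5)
Your proof is correct and matches the paper's intended argument: the corollary is stated there as following ``immediately'' from Proposition~\ref{prop: bipartite} and Theorem~\ref{thm: expander-lemma}, and the only sensible composition is exactly the one you give (bipartite first, then expander), with the factor $8$ accounting for the two halvings of the average degree plus the final passage from $d(H)$ to $\delta(H)\ge d(H)/2$. Your observation about why the opposite order would fail (expansion is not preserved under passing to a bipartite subgraph) is a valid and useful sanity check, though not strictly needed for the proof.
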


The following proposition shows that every $K_{s,t}$-free $(\ep_1,\ep_2d^{s/(s-1)})$-expander is an $(\ep_1,\ep_2d)$-expander.

\begin{prop}[Proposition 5.2 in \cite{LM1}]\label{prop: d2-d expander}
	Let $0<\ep_1<1$, $0<\ep_2<1/10^5t$, and let $t\ge s\ge 2$ be integers. If $G$ is a $K_{s,t}$-free,  $(\ep_1,\ep_2d^{s/(s-1)})$-expander with $\delta(G)\ge d/16$, then $G$ is also an $(\ep_1,\ep_2d)$-expander.
\end{prop}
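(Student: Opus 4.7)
The plan is to verify the $(\ep_1,\ep_2 d)$-expansion condition for any set $X\subseteq V(G)$ with $\ep_2 d/2\le |X|\le |V(G)|/2$, splitting into two regimes based on the size of $X$ relative to the crossover threshold $\ep_2 d^{s/(s-1)}$.

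For the large regime $|X|\ge \ep_2 d^{s/(s-1)}/2$, the assumed $(\ep_1,\ep_2 d^{s/(s-1)})$-expansion directly delivers $|N(X)|\ge \ep(|X|,\ep_1,\ep_2 d^{s/(s-1)})\cdot |X|$. A direct check of the definition of $\ep(x,\ep_1,k)$ shows that for fixed $x$ the function is monotonically non-increasing in $k$: raising $k$ shrinks $15x/k$ and thereby enlarges $\log^2(15x/k)$. Since $\ep_2 d<\ep_2 d^{s/(s-1)}$, the required bound for $(\ep_1,\ep_2 d)$-expansion is weaker, so this case comes for free.

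For the intermediate regime $\ep_2 d/2\le |X|<\ep_2 d^{s/(s-1)}/2$ the required expansion factor is essentially a constant, since $\ep(|X|,\ep_1,\ep_2 d)\le \ep_1/\log^2(15/2)$. My goal here is to show $|N(X)|\ge c|X|$ for some constant $c=c(s,t,\ep_1,\ep_2)$ via a double-counting argument leveraging $K_{s,t}$-freeness. The minimum-degree condition $\delta(G)\ge d/16$ yields $\sum_{v\in X}d(v)\ge |X|d/16$, which splits into $2e(G[X])+e(X,N(X))$. The K\H{o}v\'ari--S\'os--Tur\'an theorem, applied once to $G[X]$ and once to the bipartite graph between $X$ and $N(X)$, gives $e(G[X])\lesssim_{s,t} |X|^{2-1/s}$ and $e(X,N(X))\lesssim_{s,t} |X|^{1-1/s}|N(X)|$. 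Because $|X|<\ep_2 d^{s/(s-1)}/2$ implies $|X|^{1-1/s}<(\ep_2/2)^{(s-1)/s}d$, and the hypothesis $\ep_2<1/(10^5 t)$ makes this factor tiny compared to $d$, the intra-$X$ contribution is absorbed (say $2e(G[X])\le |X|d/32$), leaving $e(X,N(X))\ge |X|d/32$. Combining this with the KST upper bound on $e(X,N(X))$ rearranges to $|N(X)|\ge c\cdot |X|$, and since $|X|\ge \ep_2 d/2$ forces $15|X|/(\ep_2 d)\ge 15/2$, the required expansion constant $\ep_1/\log^2(15/2)$ is beaten.

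The main obstacle is purely quantitative: making sure the constants line up so that $(\ep_2/2)^{(s-1)/s}$ (times the relevant KST constants, which scale like $t^{1/s}$) is small enough to both suppress the intra-$X$ edge count and to make the resulting linear expansion constant dominate $\ep_1/\log^2(15/2)$. The hypothesis $\ep_2<1/(10^5 t)$ is tailored precisely for this, with room to spare for the additive $O(|N(X)|)$ error terms in the KST bounds (harmless when $d$ is large). Everything else is routine.
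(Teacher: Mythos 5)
Your approach---splitting on $|X|\ge \ep_2 d^{s/(s-1)}/2$ versus $\ep_2 d/2\le |X|<\ep_2 d^{s/(s-1)}/2$, passing the large regime through the given expansion, and using the K\H{o}v\'ari--S\'os--Tur\'an bound together with $\delta(G)\ge d/16$ in the small regime---is exactly the argument behind Proposition 5.2 of Liu and Montgomery (the present paper only cites that result and does not reprove it), and your quantitative checks go through with ample slack. One slip worth flagging: in the large regime you claim that for fixed $x$ the function $\ep(x,\ep_1,k)$ is \emph{non-increasing} in $k$, on the grounds that raising $k$ shrinks $15x/k$ and ``thereby enlarges $\log^2(15x/k)$''. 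In fact, since $15x/k\ge 15/2>1$ throughout the relevant range, shrinking $15x/k$ \emph{shrinks} $\log^2(15x/k)$, so $\ep(x,\ep_1,k)$ is non-\emph{decreasing} in $k$. That is precisely what you need: with $k=\ep_2 d^{s/(s-1)}>\ep_2 d$ the hypothesis gives the stronger bound, so your stated conclusion that the $(\ep_1,\ep_2 d)$ requirement is the weaker one is correct even though the supporting sentence has the monotonicity reversed. (A smaller nit: the additive error in the KST bound applied to the bipartite graph $(X,N(X))$ is $O_s(|X|)$, not $O(|N(X)|)$; this changes nothing.)
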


\subsection{Main results}

The following is a rough proof strategy for Theorem \ref{thm: balanced subdivision} and Theorem \ref{thm: c4-free balanced subdivision}. By Corollary \ref{cor: bipartite expander}, $G$ contains a bipartite subgraph which has some expansion properties. Then depending on whether the subgraph is dense or not, we divide the proof into two cases. The dense case is handled in Lemma \ref{thm: dense}, and the sparse case is covered in Lemma \ref{lem: sparse} \cite{WY}.

\begin{lemma}\label{thm: dense}
	There exists $\ep_1>0$ such that, for every $0<\ep_2<1/5$ and $s\ge 240$, there exist $d_0$ and some constant $c>0$ such that the following holds for each $n\geq d\geq d_0$ and $d\ge \log^sn$. \\
    {\rm(\romannumeral1)} If $G$ is a bipartite n-vertex $(\ep_{1},\ep_{2}d)$-expander with $\delta(G)\ge d$, then $G$ contains a $\mathsf{T}K_{c\sqrt{d}}^{(z)}$ for some $z\in \mathbb{N}$; \\
    {\rm(\romannumeral2)} If $G$ is a $C_4$-free bipartite n-vertex $(\ep_{1},\ep_{2}d^2)$-expander with $\delta(G)\ge d$, then $G$ contains a $\mathsf{T}K_{cd}^{(z)}$ for some $z\in \mathbb{N}$.
\end{lemma}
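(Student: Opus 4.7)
The plan is to adapt the Liu-Montgomery framework for balanced subdivisions in sublinear expanders to the sharper thresholds of Lemma \ref{thm: dense}. Both cases are treated uniformly once one fixes the target number of branch vertices: set $t=c\sqrt{d}$ in case~(i) and $t=cd$ in case~(ii). In case~(ii), Proposition \ref{prop: d2-d expander} lets us view the $C_4$-free $(\ep_1,\ep_2 d^2)$-expander as an $(\ep_1,\ep_2 d)$-expander so that Lemma \ref{lem: small-diameter-lemma} is available with the weaker threshold; the surplus $d^2$ expansion is then what allows us to accommodate as many as $cd$ branch vertices rather than $c\sqrt{d}$. The aim is a $TK_t^{(\ell)}$ with $\ell=O(\log^3 n)$ whose $\binom{t}{2}$ connecting paths are all internally disjoint and of the common length $\ell$.

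First I would locate $t$ pairwise well-separated candidate branch vertices $v_1,\dots,v_t$ together with pairwise disjoint ``units'' $U_i$, each a small structured expander around $v_i$ (for instance a truncated BFS ball). Inside each $U_i$ I would embed $t-1$ internally disjoint \emph{adjusters}: subgraphs $A_{ij}$ with one endpoint $v_i$ and a free endpoint $w_{ij}$ such that for every integer $m$ in an interval of length $\Omega(\log^3 n)$ there is a $v_iw_{ij}$-path in $A_{ij}$ of length exactly $m$. This length flexibility is what ultimately enables the equalisation of path lengths, and the disjoint-adjuster construction from expansion is presumably one of the three ingredients to be isolated in Section \ref{sec:dense proof}.

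Second, I would process the $\binom{t}{2}$ pairs $(i,j)$ one at a time. At each step, delete all previously used vertices from $G$ and invoke Lemma \ref{lem: small-diameter-lemma} to find a short $w_{ij}w_{ji}$-path $P_{ij}$ of some length $\ell_{ij}\le 2\ep_1^{-1}\log^3(15n/k)$. Because the lemma tolerates deletion of $x\cdot\ep(x)/4$ vertices and the cumulative deletion after all connections is $O(t^2\log^3 n)$, this step remains valid provided $d\ge\log^s n$ with $s$ sufficiently large; this is exactly where the hypothesis $s\ge 240$ enters. Having obtained $P_{ij}$, I would then select $v_iw_{ij}$- and $v_jw_{ji}$-paths inside the adjusters whose lengths together with $\ell_{ij}$ sum to exactly $\ell$, producing a $v_iv_j$-path of length $\ell$. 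By construction all such paths are internally disjoint, which yields the desired $TK_t^{(\ell)}$.

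The main obstacle is the adjuster construction. Inside each unit $U_i$ of polylogarithmic size one must find $t-1$ internally disjoint subgraphs, each realising every integer length in a wide enough interval, and one needs $t$ such well-packed units simultaneously inside the expander. This is where the Koml\'os-Szemer\'edi expansion machinery is pushed hardest, and in case~(ii) the $C_4$-freeness together with Proposition \ref{prop: d2-d expander} is essential for packing $cd-1$ adjusters (rather than $c\sqrt{d}-1$) into a single unit of controllable size. The secondary obstacle is bookkeeping: the common length $\ell$ must exceed the maximum $\ell_{ij}$ plus the shortest attainable adjuster path and lie below the longest attainable adjuster path plus the minimum $\ell_{ij}$, so that the equalising step above always finds the required adjuster lengths; once the adjuster intervals are long enough this is automatic.
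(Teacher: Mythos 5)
Your high-level plan (branch structures plus length-flexible gadgets plus short connecting paths found via Lemma \ref{lem: small-diameter-lemma}) is in the right spirit, but there are several genuine gaps that keep it from being a proof of Lemma \ref{thm: dense}.

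\textbf{Missing dense regime.} You treat $n$ and $d$ as if $n/\kappa^2$ (with $\kappa=\sqrt d$ in case~(i) and $\kappa=d$ in case~(ii)) were always large. It need not be, and the sublinear-expander machinery has nothing to say when $n$ is within a constant factor of $\kappa^2$; in that regime the parameter $m$ cannot be taken large and the bookkeeping collapses. The paper splits off the case $n<K\kappa^2$ and invokes Lemma \ref{lem: fox-sudakov} (Fox--Sudakov) in case~(i) and Lemma \ref{lem: c4-free dense} (Balogh--Liu--Sharifzadeh) in case~(ii). Your proposal has no analogue of this step, so it simply does not cover those parameters.

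\textbf{Wrong placement and count of adjusters.} You propose to pack $t-1$ internally disjoint length-flexible adjusters inside each of the $t$ units, so that every pair $(i,j)$ has its own pre-built adjuster on both sides. This is a much heavier structure than what the paper constructs, and you yourself flag its feasibility as the main obstacle without resolving it. The paper avoids the packing problem entirely: the units (Definitions \ref{hub}--\ref{unit}) are hub-stars whose job is only to give each branch vertex many disjoint exits, and there is just \emph{one} adjuster per pair, built \emph{between} the two units and on demand via Lemma \ref{lem: robust adjuster}, with Lemma \ref{cor: connect four} then joining its two ends to the two units. Your ``unit'' (a truncated BFS ball of polylogarithmic size) also has the wrong scale: the paper's units have size on the order of $\kappa^2 m^4$, far larger than polylogarithmic, and this size is essential both for hitting them with short paths (via Lemma \ref{lem: small-diameter-lemma}, which needs sets of size at least $k\sim \ep_2\kappa^2$) and for surviving the cumulative deletions.

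\textbf{Parity.} You claim each adjuster realises ``every integer $m$ in an interval,'' and you equalise lengths by choosing adjuster paths summing to exactly $\ell$. In a bipartite graph this is impossible: between two fixed endpoints all paths have the same parity, so an adjuster only realises lengths in an arithmetic progression of common difference $2$ (cf.\ $\mathbf{A4}$ of Definition \ref{adjuster}). Without a parity argument your equalisation step can fail for half the target lengths. The paper handles this by selecting $c'\kappa/2$ units whose core vertices all lie in the same part of the bipartition (by pigeonhole), so that $\ell-l(P)-l(Q)$ is automatically congruent to $l(\mathcal A)$ modulo~$2$.

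\textbf{Case (ii) and Proposition \ref{prop: d2-d expander}.} You propose to downgrade the $(\ep_1,\ep_2 d^2)$-expander to an $(\ep_1,\ep_2 d)$-expander via Proposition \ref{prop: d2-d expander} and then say the ``surplus $d^2$ expansion'' lets you embed $cd$ branch vertices. This is internally inconsistent: once you downgrade you no longer have a $d^2$-expander, and the $d$-expander alone cannot give $cd$ branch vertices (that is exactly the difference between cases~(i) and~(ii)). In the paper Proposition \ref{prop: d2-d expander} is used only in the deduction of Theorem \ref{thm: c4-free balanced subdivision} (so that the \emph{sparse} lemma applies); inside the proof of Lemma \ref{thm: dense} the $C_4$-free case keeps the full $d^2$-expansion throughout (Lemmas \ref{lem: robust unit}(ii), \ref{lem: robust adjuster}(ii), \ref{cor: connect four}(ii)) and additionally exploits $C_4$-freeness directly, e.g.\ via Corollary \ref{cor: c4-free robust degree} and Lemma \ref{lem: c4-free robust hub}, to build larger hubs.

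In summary, the outline gestures at the right ingredients, but it omits the dense subcase, proposes a substantially harder (and unjustified) adjuster-packing inside units where the paper needs only one adjuster per pair built on demand, overlooks the parity constraint imposed by bipartiteness, and misuses Proposition \ref{prop: d2-d expander} in case~(ii).
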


\begin{lemma}[Lemma 1.3 in \cite{WY}]\label{lem: sparse}
	There exists $\ep_1>0$ such that for any $0<\ep_2<1/5$ and $s\ge 20$, there exist $d_0$ and some constant $c>0$ such that the following holds for each $n\ge d\ge d_0$ and $d< \log^sn$. Suppose that $G$ is a $\mathsf{T}K_{d/2}^{(2)}$-free bipartite n-vertex $(\ep_{1},\ep_{2}d)$-expander with $\delta(G)\ge d$. Then $G$ contains a $\mathsf{T}K_{cd}^{(z)}$ for some $z\in \mathbb{N}$.
\end{lemma}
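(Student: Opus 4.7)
The plan is to follow the Liu--Montgomery sublinear-expander framework for building balanced subdivisions, adapted to the sparse regime $d<\log^s n$ where the host graph is super-polynomially larger than the target clique size. The key observation justifying the hypothesis is that if $G$ contains a $TK_{d/2}^{(2)}$ we are already done (it is a balanced clique subdivision of order $d/2\ge cd$ with $\ell=2$), so throughout the proof we may assume $G$ is $TK_{d/2}^{(2)}$-free, and this exclusion is leveraged to rule out localised obstructions at key scales.

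First, I would use the expansion together with $\delta(G)\ge d$ to select $cd$ candidate branch vertices $v_{1},\ldots,v_{cd}$ and, around each, build a pairwise disjoint rooted expansion tree $F_{i}$ of polylog depth with many leaves, obtained by iterating the neighbourhood expansion until the sizes are comfortable. Next, following the Liu--Montgomery recipe, I would construct a supply of \emph{adjusters}: substructures with two distinguished endpoints admitting internally disjoint endpoint-to-endpoint paths of every length in an interval of size $\mathrm{poly}(\log n)$. Elementary $\pm 1$ adjusters come from two internally disjoint $u$-$v$ paths whose lengths differ by exactly one, produced via the connectivity assertion in Lemma \ref{lem: small-diameter-lemma} applied to two neighbours of $u$; a polylog-long chain of such elementary gadgets then yields a full-range adjuster. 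The sparse regime $d<\log^{s}n$ is essential: because $n$ is super-polynomial in $d$, we can afford to build $\binom{cd}{2}$ disjoint adjusters in parallel with the $F_{i}$'s.

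With branch vertices, expansion trees, and adjusters in place, the final step connects each pair $(v_{i},v_{j})$ by routing a short path from $F_{i}$ to one endpoint of the pair's assigned adjuster and from its other endpoint to $F_{j}$, applying Lemma \ref{lem: small-diameter-lemma} iteratively and marking previously used vertices as forbidden. Each routing has length $O(\log^{3}(n/d))$, so the accumulated forbidden set stays of size $\tilde{O}(d^{2}\log^{3}n)$, which fits comfortably inside the $x\cdot \ep(x)/4$ budget of Lemma \ref{lem: small-diameter-lemma} whenever $s\ge 20$. Once all $\binom{cd}{2}$ pairs are linked, the adjusters are tuned so that every $v_{i}v_{j}$-path acquires a common length $\ell$, producing the desired $TK_{cd}^{(\ell)}$.

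The main obstacle is preserving vertex-disjointness across the $\Omega(d^{2})$ simultaneous routings and adjuster constructions. The $TK_{d/2}^{(2)}$-free hypothesis is used precisely to forbid bottleneck configurations (clusters of vertices with many pairwise common neighbours) that would otherwise cause collisions when short rigid pieces of different adjusters compete for the same low-degree hubs; the sublinear expansion then supplies room to route around the growing forbidden set at every step. The constant $c>0$ is determined by the interplay of $\ep_{1}$, the adjuster length, and the polylog slack coming from $s\ge 20$, and I expect the quantitative heart of the proof to be a careful accounting showing that all of these can be balanced against the $x\cdot\ep(x)/4$ budget throughout the $\binom{cd}{2}$ routings.
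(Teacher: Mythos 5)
First, a point of reference: the paper does not prove this statement at all --- it is imported verbatim as Lemma~1.3 of Wang \cite{WY}, and only invoked as a black box in the proofs of Theorems~\ref{thm: balanced subdivision} and~\ref{thm: c4-free balanced subdivision}. So there is no in-paper proof to measure your sketch against; the relevant comparison is with the expander machinery of \cite{WY} and \cite{LM2}, which your outline follows in spirit (branch vertices with disjoint expansions, adjusters, connections via Lemma~\ref{lem: small-diameter-lemma} avoiding used vertices, and the observation that $c\le 1/2$ lets one assume $TK_{d/2}^{(2)}$-freeness).

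Within that framework your sketch has two genuine gaps. First, your elementary ``$\pm 1$ adjuster'' built from two internally disjoint $u,v$-paths whose lengths differ by exactly one cannot exist here: $G$ is bipartite, so any two $u,v$-paths have lengths of the same parity, and two such paths differing by one would give an odd closed walk. The correct gadget (Definition~\ref{adjuster}, as in \cite{LM2} and in this paper's dense case) produces lengths in an arithmetic progression of common difference $2$, and one must additionally place all branch vertices in the same side of the bipartition so that the residual length $\ell - l(P) - l(Q)$ has the right parity; your plan never addresses parity. Second, the quantitative accounting at the heart of the lemma is asserted rather than established, and as stated it fails. If the structures around each branch vertex and the ends of the adjusters have the ``natural'' size obtainable from one or two neighbourhood levels when $\delta(G)\ge d$, namely $x \approx d\cdot\mathrm{polylog}\,n$, then the avoidance budget of Lemma~\ref{lem: small-diameter-lemma} is $x\cdot\varepsilon(x)/4 \approx d\cdot\mathrm{polylog}\,n$, whereas the forbidden set accumulated over $\binom{cd}{2}$ routings of length $\mathrm{poly}(\log n)$ has size of order $d^{2}\cdot\mathrm{polylog}\,n$; since $d$ may be as large as $\log^{s}n$ with $s\ge 20$, the extra factor $d$ is not absorbed by any fixed power of $\log n$, so the budget comparison you call ``comfortable'' is false at this scale. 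Making the argument work requires growing, around each branch vertex and adjuster end, expansions of size $\Omega(d^{2}\cdot\mathrm{polylog}\,n)$ over many expansion steps while keeping them disjoint and keeping all path lengths controlled --- and using the $TK_{d/2}^{(2)}$-freeness (via dependent-random-choice-type degree preservation, cf.\ Lemma~\ref{lem: robust degree} and the appendix) to rebuild such structures after deletions. That construction is precisely the technical content of Wang's Lemma~1.3, and it is the step your proposal leaves unproved.
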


We will show that Theorem \ref{thm: balanced subdivision} and Theorem \ref{thm: c4-free balanced subdivision} follow from Lemma \ref{thm: dense} and Lemma \ref{lem: sparse}.

\begin{proof}[Proof of Theorem~\ref{thm: balanced subdivision}.]
	Given $\ep_2=1/10$ and $s=240$, we have constants $\ep_1$ and $d_0$ such that the properties in Corollary \ref{cor: bipartite expander}, Lemma \ref{thm: dense} and Lemma \ref{lem: sparse} hold. Let $G$ be a graph with average degree $d(G)=d$ for some $d\ge d_0$. Write $d_1=d/8$. By Corollary \ref{cor: bipartite expander} with $d=d_1$, $G$ has a bipartite $(\ep_{1},\ep_{2}d_1)$-expander $H$ with $\delta(H)\ge d_1$.
	Let $n=\abs{H}$. If $d_1\ge \log^sn$, then by Lemma \ref{thm: dense}, $G$ contains a $\mathsf{T}K_{c_1\sqrt{d_1}}^{(z)}$ for some constant $c_1>0$ and some $z\in \mathbb{N}$. Otherwise, by Lemma \ref{lem: sparse}, $G$ contains either a $\mathsf{T}K_{d_1/2}^{(2)}$ or a $\mathsf{T}K_{c_2d_1}^{(z)}$ for some constant $c_2>0$ and some $z\in \mathbb{N}$. This finishes the proof by taking $c=\min\{1/16,\sqrt{2}c_1/4, c_2/8\}$.
\end{proof}

\begin{proof}[Proof of Theorem~\ref{thm: c4-free balanced subdivision}.]
	Fix $\ep_2 =1/10^6$ and $s=240$, there exist constants $\ep_1$ and $d_0$ such that the conclusions of Corollary \ref{cor: bipartite expander}, Lemma \ref{thm: dense} and Lemma \ref{lem: sparse} hold. Let $G$ be a $C_4$-free graph with average degree at least $d$ for some $d\geq d_0$. Write $d_1=d/8$. By Corollary \ref{cor: bipartite expander} with $d=d_1$, $G$ has a bipartite $(\ep_{1},\ep_{2}d_1^2)$-expander $H$ with $\delta(H)\ge d_1$. Let $n=\abs{H}$. If $d_1\ge \log^sn$, then by Lemma \ref{thm: dense}, $G$ contains a $\mathsf{T}K_{c_1d_1}^{(z)}$ for some constant $c_1>0$ and some $z\in \mathbb{N}$. Otherwise, by Proposition \ref{prop: d2-d expander} with $s=t=2$, $H$ is also an $(\ep_{1},\ep_{2}d_1)$-expander. Then by Lemma \ref{lem: sparse}, $G$ contains either a $\mathsf{T}K_{d_1/2}^{(2)}$ or a $\mathsf{T}K_{c_2d_1}^{(z)}$ for some constant $c_2>0$ and some $z\in \mathbb{N}$. This finishes the proof by taking $c=\min\{1/16,c_1/8, c_2/8\}$.
\end{proof}

\section{Proof of Lemma \ref{thm: dense}}\label{sec:proof1}

In this section we prove Lemma \ref{thm: dense}. To achieve this, we first introduce some structures from Liu and Montgomery \cite{LM1,LM2}.

\subsection{Gadgets}

\begin{defn}[Hub \cite{LM1}]\label{hub}
	Given integers $h_1,h_2>0$, an \emph{$(h_1,h_2)$-hub} is a graph consisting of a center vertex $u$, a set $S_{1}(u)\subseteq N(u)$ of size $h_1$, and pairwise disjoint sets $S_1(z)\subseteq N(z)\backslash \{u\}$ of size $h_2$ for each $z\in S_{1}(u)$. Denote by $H(u)$ a hub with center vertex $u$ and write $B_1(u)=\{u\}\cup S_{1}(u)$ and $S_2(u)=\bigcup_{z\in S_{1}(u)}S_{1}(z)$. For any $z\in S_{1}(u)$, write $B_1(z)=\{z\} \cup S_{1}(z)$.
\end{defn}

\begin{defn}[Unit \cite{LM1}]\label{unit}
	Given integers $h_0,h_1,h_2,h_3>0$, an \emph{$(h_0,h_1,h_2,h_3)$-unit} $F$ is a graph consisting of a core vertex $v$, $h_0$ vertex-disjoint $(h_1,h_2)$-hubs $H(u_1),\cdots,H(u_{h_0})$ and pairwise disjoint $v,u_j$-paths of length at most $h_3$. By the \emph{exterior} of the unit, denoted by Ext($F$), we mean $\bigcup_{j=1}^{h_0}S_{2}(u_j)$. Denote by Int($F$):=$V(F)\backslash $Ext($F$) the \emph{interior} of the unit.
\end{defn}

\begin{figure}[htbp]\label{L1}
	\centering
	\includegraphics[scale=0.4]{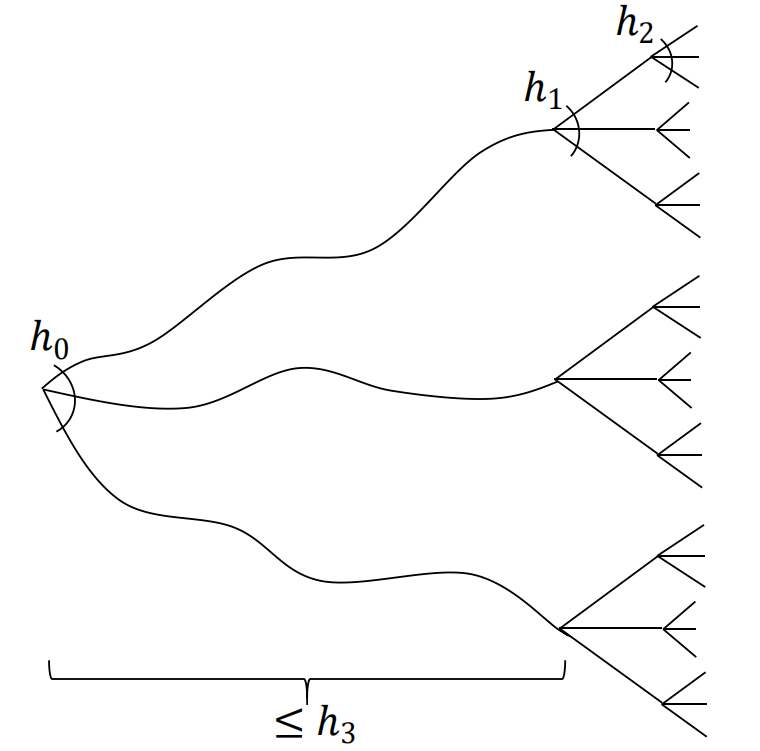}
	\caption{$(h_0,h_1,h_2,h_3)$-unit}
\end{figure}

\begin{defn}[Expansion \cite{LM2}]\label{expansion}
	Given a vertex $v$ in a graph $F$, $F$ is a \emph{$(D,m)$-expansion} of $v$ if $\abs{F}=D$ and $v$ is at distance at most $m$ in $F$ from any other vertex of $F$.
\end{defn}

By the definition of expansion, we have the following property.

\begin{prop}[\cite{LM2}]\label{prop: expansion}
	Let $D,m\in \mathbb{N}$ and $1\le D'\le D$. Then, any graph $F$ which is a $(D,m)$-expansion of $v$ contains a subgraph which is a $(D',m)$-expansion of $v$.
\end{prop}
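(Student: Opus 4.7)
The plan is to use a BFS (breadth-first search) tree to shrink $F$ down to the required size while preserving the distance-from-$v$ property. Concretely, I would fix a BFS tree $T$ of $F$ rooted at $v$; by the hypothesis that $v$ is within distance $m$ of every other vertex of $F$, the tree $T$ has depth at most $m$, so in $T$ itself every vertex is at distance at most $m$ from $v$.

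Then I would iteratively prune leaves of $T$ other than the root. As long as $|V(T)|\ge 2$, the tree $T$ has at least two vertices of degree $1$, hence at least one leaf distinct from $v$, so we may delete such a leaf and obtain a smaller tree that is still rooted at $v$ and still has depth at most $m$. Starting from $|V(T)|=D$ and repeating this step $D-D'$ times (possible because $D'\ge 1$ ensures we stop before removing $v$) yields a subtree $T'\subseteq T\subseteq F$ with exactly $D'$ vertices in which $v$ is still at distance at most $m$ from every other vertex. Taking $F':=T'$ as a subgraph of $F$ gives the desired $(D',m)$-expansion of $v$.

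No serious obstacle is expected: the argument is purely combinatorial and the only point to be careful about is confirming that a non-root leaf exists at every step so that the pruning terminates at exactly $D'$ vertices with $v$ retained. This is immediate from the elementary fact that any finite tree with at least two vertices has at least two leaves.
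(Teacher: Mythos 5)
Your proof is correct. The paper does not reproduce a proof of this proposition — it is cited directly from Liu and Montgomery \cite{LM2} — so there is no in-paper argument to compare against. Your BFS-tree leaf-pruning argument is a clean and complete proof: the BFS tree rooted at $v$ preserves distances from $v$, deleting a non-root leaf of a tree leaves distances among the remaining vertices unchanged, and a tree on at least two vertices always has at least two leaves, so the pruning never gets stuck before reaching $D'$ vertices. An essentially equivalent variant (likely what \cite{LM2} does) is to repeatedly delete from $F$ a vertex at maximum distance from $v$; one checks that no shortest path from $v$ to another surviving vertex can pass through a maximum-distance vertex, so the induced subgraph retains the distance bound. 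Both routes are elementary and interchangeable here.
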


Liu and Montgomery \cite{LM2} introduced a structure called \emph{adjuster} which contains a collection of paths whose lengths form a long arithmetic progression of difference 2. We shall use this structure to adjust a path to a desired length.

\begin{defn}[Adjuster \cite{LM2}]\label{adjuster}
	A \emph{$(D,m,k)$-adjuster} $\mathcal{A}=(v_1,F_1,v_2,F_2,A)$ in a graph $G$ consists of \emph{core vertices} $v_1,v_2\in V(G)$, graphs $F_1,F_2\subseteq G$ and a \emph{center vertex set} $A\subseteq V(G)$ such that the following hold for some $l\in \mathbb{N}$.
	\begin{itemize}
	\item[$\mathbf{A1}$] $A, V(F_1)$ and $V(F_2)$ are pairwise disjoint.
	\item[$\mathbf{A2}$] For each $i\in [2]$, $F_i$ is a $(D,m)$-expansion of $v_i$.
	\item[$\mathbf{A3}$] $\abs{A} \leq 10mk$.
	\item[$\mathbf{A4}$] For each $i\in \{0,1,\cdots,k\}$, there is a $v_1,v_2$-path in $G[A\cup \{v_1,v_2\}]$ of length $l+2i$.
\end{itemize}
\end{defn}

We refer to the subgraphs $F_1$ and $F_2$ as the \emph{ends} of the adjuster, and note that $V(\mathcal{A})=V(F_1)\cup V(F_2)\cup A$. We denote by $l(\mathcal{A})$ the smallest such $l$ for which $\mathbf{A4}$ hold. Then it immediately follows that $l(\mathcal{A})\le \abs{A}+1\le 10mk+1$. We call a $(D,m,1)$-adjuster a \emph{simple adjuster}.

\subsection{Proof of Lemma~\ref{thm: dense}}\label{subsec: dense}

Throughout the paper, we always choose $m$ to be the smallest even integer which is larger than $80\log^{4}\frac{n}{d}$ (or $80\log^{4}\frac{n}{d^2}$ for the $C_4$-free  case) in which $\delta(G)\ge d$. We first introduce the following two lemmas. The first one is from Fox and Sudakov \cite{F-S}, which states the existence of a balanced clique subdivision in a dense graph.

\begin{lemma}[Theorem 8.1 in \cite{F-S}]\label{lem: fox-sudakov}
    For every $\ep >0$, if $G$ is a graph with $n$ vertices and $\ep n^2$ edges, then $G$ contains a $\mathsf{T}K_{\ep \sqrt{n}}^{(2)}$.
\end{lemma}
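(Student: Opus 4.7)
The plan is to combine \emph{dependent random choice} (DRC) with a greedy embedding. A $TK_k^{(2)}$ on $k$ branch vertices is exactly a choice of $k$ vertices together with a \emph{distinct} common neighbour for each of the $\binom{k}{2}$ pairs, so the natural two-step strategy is (a) locate a set of candidate branch vertices in which every pair has many common neighbours, and then (b) extract the subdividing vertices one pair at a time.

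\textbf{Step 1 (Dependent random choice).} Write $k = \ep\sqrt{n}$ and $\bar d = 2\ep n$. I would sample a multiset $T$ of $t$ vertices of $G$ uniformly and independently at random, where $t$ is of order $\log n / \log(1/\ep)$ and will be fixed at the end, and set $U = \bigcap_{v \in T} N(v)$. By convexity of $x \mapsto x^t$,
\[
\mathbb{E}\,[|U|] \;=\; \sum_{w \in V(G)} \Bigl(\tfrac{d(w)}{n}\Bigr)^{t} \;\ge\; n(2\ep)^{t}.
\]
Call a pair $\{u,v\}$ \emph{bad} if $|N(u)\cap N(v)| < M$, where $M = \Theta(k^{2})$ is the threshold needed in Step~2. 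The probability that both endpoints of a fixed bad pair land in $U$ is at most $(M/n)^{t}$, so the expected number of bad pairs contained in $U$ is at most $\binom{n}{2}(M/n)^{t}$. I would tune $t$ so that $n(2\ep)^{t}$ is a large multiple of $k$ while $\binom{n}{2}(M/n)^{t}$ is much smaller than $k$; then by averaging there exists an outcome in which deleting one vertex from each bad pair leaves a set $U' \subseteq U$ with $|U'| \ge k$ and every pair in $U'$ having $\ge M$ common neighbours in $G$.

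\textbf{Step 2 (Greedy extraction).} Pick any $u_1,\dots,u_k \in U'$ as branch vertices, enumerate the $\binom{k}{2}$ pairs in any order, and assign to each pair $\{u_i,u_j\}$ a common neighbour $w_{ij}$ avoiding $u_1,\dots,u_k$ and the previously chosen $w_{i'j'}$. The forbidden set has size at most $\binom{k}{2}+k-1$, so a valid $w_{ij}$ exists as long as $M > \binom{k}{2}+k$, which is consistent with $M = \Theta(k^{2})$. The branch vertices together with the $w_{ij}$ form a $TK_{k}^{(2)}$.

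\textbf{Main obstacle.} The delicate point is the joint tuning of $t$ and $M$: the first inequality in Step~1 forces $t \lesssim \tfrac{1}{2}\log n / \log(1/(2\ep))$, while the second forces $t \gtrsim \tfrac{3}{4}\log n / \log(1/\ep)$, and for small $\ep$ these windows overlap only after sharpening the constants. To match the advertised \emph{linear} dependence on $\ep$ in $k = \ep\sqrt{n}$, one typically trims $M$ by replacing the crude greedy argument of Step~2 with a Hall-type matching in the bipartite graph between $\binom{U'}{2}$ and $V(G)\setminus U'$: this relaxes $M$ to $O(k)$ on most pairs and widens the permissible range of $t$, at the cost of a slightly more intricate DRC variant in Step~1. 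Everything else is routine bookkeeping.
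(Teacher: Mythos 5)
The paper does not prove this lemma: it is imported verbatim as Theorem~8.1 of Fox and Sudakov's dependent random choice survey \cite{F-S}, and the dependent random choice lemma proved in the appendix (Lemma~\ref{depend}) is a different, bipartite tool used later in the proof of Lemma~\ref{lem: robust degree}, not a derivation of this statement. So the comparison can only be to the cited source and to internal correctness of your sketch.

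On the merits, your outline is the standard DRC-plus-greedy scheme, and it does not close, for exactly the reason you flag at the end. Taking $M=\Theta(k^2)=\Theta(\ep^2 n)$ so that the greedy extraction in Step~2 works, the requirement $\mathbb{E}[|U|]\ge 2k$ forces $t\le 1+\tfrac{1}{2}\log n/\log(1/(2\ep))$, while requiring the expected number of bad pairs to be at most $k$ forces $t\ge \tfrac{1}{2}+\tfrac{3}{4}\log n/\log(1/\ep)$; these windows overlap only when $\log(1/\ep)\le 3\log 2$, i.e.\ $\ep\ge 1/8$, which is not the regime the lemma is about. Optimizing the exponent in the basic argument yields $TK_{\Theta(\ep^{3/2}\sqrt n)}^{(2)}$, and the entire point of Fox--Sudakov's Theorem~8.1 is the improvement from $\ep^{3/2}$ to $\ep$. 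Your proposed repair is itself a gap, not a fix: weakening to $M=O(k)$ common neighbours per pair and invoking a Hall-type matching does not give the Hall condition, since a priori all $\binom{k}{2}$ pairs could share the same $O(k)$ common neighbours, in which case a family of $\Omega(k)$ pairs has a union of candidate subdividers of size only $O(k)$, far below what is needed. One must establish a quantitatively stronger statement about how the common-neighbour sets spread out across $V(G)$ --- and supplying that is the actual content of Fox--Sudakov's proof, which the sketch acknowledges but does not provide. Step~2 is routine; the missing piece is precisely Step~1 at the claimed threshold.
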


The second lemma is from Balogh, Liu and Sharifzadeh \cite{BLS} on large balanced clique subdivisions in dense $C_4$-free graphs.

\begin{lemma}[Theorem 1.4 in \cite{BLS}]\label{lem: c4-free dense}
    For every $c>0$ there is a $c'>0$ such that the following holds. If $G$ is a $C_4$-free graph with $n$ vertices and $cn^{3/2}$ edges, then $G$ contains a $\mathsf{T}K_{c'\sqrt{n}}^{(4)}$.
\end{lemma}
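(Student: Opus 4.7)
The plan is to find the $TK_{c'\sqrt n}^{(4)}$ by first passing to a high-min-degree subgraph of $G$, then selecting $t:=c'\sqrt n$ branch vertices, and finally connecting all $\binom{t}{2}$ pairs by length-4 paths one at a time, using the very strong second-neighbourhood expansion that $C_4$-freeness forces. Crucially, the argument will avoid black-boxing the statement itself and instead rest on elementary counting enabled by the $C_4$-free condition.

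First I would pass to a subgraph $G'\subseteq G$ with $\delta(G')\ge c_0\sqrt n$ for some $c_0=c_0(c)>0$, by iteratively removing vertices of low degree (this costs a bounded fraction of the edges, and by the K\H ov\'ari--S\'os--Tur\'an bound $|V(G')|=\Theta(n)$). The decisive $C_4$-free fact is that for every $v\in V(G')$, $|N^2(v)|\ge \delta(v)(\delta(v)-1)=\Omega(n)$, because two length-2 paths from $v$ with the same endpoint would create a $C_4$. The same reasoning shows that a length-4 path $v$--$a$--$b$--$c$--$w$ in $G'$ is uniquely determined by its middle vertex $b$: the vertex $a$ must be the unique common neighbour of $v$ and $b$, and $c$ the unique common neighbour of $w$ and $b$.

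Second, I would select the branch vertices $v_1,\dots,v_t$ from a carefully chosen pool $W\subseteq V(G')$ via a dependent-random-choice / averaging argument, so that pairwise second-neighbourhood intersections are uniformly large: for some $\alpha=\alpha(c)>0$, every pair $v_i,v_j$ satisfies $|N^2(v_i)\cap N^2(v_j)|\ge \alpha n$. One way to arrange this is to take $W$ inside the second neighbourhood of a single "pivot" vertex of large second neighbourhood, where the overlap is automatic. Using $C_4$-freeness a second time, distinct branch vertices share at most one common neighbour, so on average each vertex of $G'$ lies in only $O(1)$ of the sets $N(v_i)$; deleting the handful of outliers leaves a branch set for which no outside vertex lies in many $N(v_i)$'s.

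Finally, I would embed the $\binom{t}{2}$ paths greedily. Let $U$ denote the set of already-used internal vertices, so $|U|\le 3\binom{t}{2}=O(n)$. To connect $v_i$ and $v_j$, I count length-4 paths whose three inner vertices avoid $U$. By the uniqueness from $C_4$-freeness, a used $u$ blocks at most one path as the middle vertex, and at most $d(u)=O(\sqrt n)$ paths as $a$ (only when $u\in N(v_i)$) and similarly as $c$ (only when $u\in N(v_j)$). If the pool construction keeps $|U\cap N(v_i)|$ small, the total number of blocked paths is $O(n)$, so with $\alpha n$ candidate paths available a suitable $c'$ leaves an unblocked path to use. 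The main obstacle will be the interaction between the second and third steps: I need the choice of $W$ to ensure \emph{simultaneously} a pairwise lower bound of $\alpha n$ on $|N^2(v_i)\cap N^2(v_j)|$ and an upper bound of $O(\sqrt n)$ on $|U\cap N(v_i)|$ throughout all $\binom{t}{2}$ embedding steps, and it is in calibrating these two invariants together (so that the greedy extension never fails) that the technical work will concentrate.
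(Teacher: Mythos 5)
First, a point of comparison: the paper does not prove this lemma at all --- it is quoted verbatim as Theorem~1.4 of Balogh--Liu--Sharifzadeh \cite{BLS} and used as a black box in the case $n<K\kappa^2$ of Lemma \ref{thm: dense}. So your proposal is not an alternative route to the paper's argument; it is an attempt to reprove the cited result from scratch, and it has to be judged on whether it would actually close.

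As it stands it does not, and the gap sits exactly at the step you flag as ``the main obstacle''. Your plan needs $c'\sqrt n$ branch vertices with $|N^2(v_i)\cap N^2(v_j)|\ge\alpha n$ for every pair, but the only concrete mechanism you offer --- taking them inside $N^2(p)$ of a pivot $p$, ``where the overlap is automatic'' --- is not automatic: $v_i,v_j\in N^2(p)$ says nothing about $N^2(v_i)\cap N^2(v_j)$. The expansion you do have, $|N^2(v)|\gtrsim\delta(\delta-2)\approx c_0^2n$ (note $\delta(\delta-1)$ is not quite right, since endpoints of the length-2 paths may fall back into $N(v)$), gives sets of density $c_0^2$ with $c_0$ small, and two such sets need not meet at all; indeed $G'$ may be disconnected into many $C_4$-free pieces of this density, so a genuine selection argument is unavoidable, and it is precisely this step that carries the weight in \cite{BLS}. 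A second, independent problem is the blocking estimate in the greedy stage: you bound the paths lost through a used vertex $u\in N(v_i)$ by $d(u)=O(\sqrt n)$, but a $C_4$-free graph with $cn^{3/2}$ edges can have vertices of degree far exceeding $\sqrt n$ (only their number, not their existence, is limited by the edge count), and the minimum-degree cleaning does not cap the maximum degree. Since $b\in N^2(v_i)$ is reached from $v_i$ through a \emph{unique} intermediate vertex, a single high-degree used neighbour of $v_i$ can invalidate up to $d(u)$ middle vertices, so $\sum_{u\in U\cap N(v_i)}d(u)$ can a priori swamp $\alpha n$; you would need an extra idea (e.g.\ barring high-degree vertices from serving as internal vertices, or choosing branch vertices and middles avoiding their neighbourhoods). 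Until both of these calibrations are carried out, the proposal is a programme rather than a proof, and it cannot substitute for the citation to \cite{BLS}.
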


Then based on the two lemmas above, it remains to consider the case when $d$ is much less than $n$. An outline of the proof of Lemma \ref{thm: dense} is as follows. We construct a collection of units whose interiors are pairwise disjoint (see Lemma \ref{lem: robust unit}), and the core vertices of those units would serve as the branch vertices of a balanced clique subdivision. In order to connect a pair of core vertices of two units, $v_1,v_2$ say, by a path of fixed length $z$, we first construct an adjuster. Adapting an approach in Liu and Montgomery \cite{LM2}, we also need to robustly build a desired adjuster by linking simple adjusters. Unlike in \cite{LM2} and \cite{WY} where the simple adjuster is relatively small, we need a large-sized one for our purpose (see Lemma \ref{lem: robust adjuster}). To be more precise, we construct an adjuster of order roughly $dm^{O(1)}$. To achieve this, we shall introduce a specific structure (see Definition \ref{octopus}).
Then we find two paths connecting the exteriors of two units to two expansions of the adjuster. By extending those two paths to $v_1$, $v_2$ within two units respectively, we can get two paths $P$ and $Q$ such that the sum of their length is close to $z$. Finally, by the property of the adjuster, we obtain an intermediate path $R$ of length $z-l(P)-l(Q)$, and thus $P\cup R\cup Q$ is a path as desired.

The following lemma enables us to build a unit whilst avoiding any medium-sized vertex set. This helps us construct many units whose interiors are pairwise disjoint.

\begin{lemma}\label{lem: robust unit}
    For any $0<\ep_1,\ep_2<1$, $s\ge 240$ and let $c=1/200$, there exists $K$ such that the following holds for sufficiently large $n$ and $d$ with $d\ge \log^{s}n$. \\
    {\rm(\romannumeral1)} Let $G$ be a $\mathsf{T}K_{\sqrt{d}}^{(2)}$-free bipartite n-vertex $(\ep_{1},\ep_{2}d)$-expander with $\delta(G)\ge d$ and $n\ge Kd$. Given any set $W\subseteq V(G)$ with size at most $2cdm^{4}$, we have that $G-W$ contains a $(c\sqrt{d},m^4,c\sqrt{d},2m)$-unit. \\
    {\rm(\romannumeral2)} Let $G$ be a $C_4$-free bipartite n-vertex $(\ep_{1},\ep_{2}d^2)$-expander with $\delta(G)\ge d$ and $n\ge Kd^2$. Given any set $W\subseteq V(G)$ with size at most $2cd^2m^{4}$, we have that $G-W$ contains a $(cd,m^4,cd,2m)$-unit.
\end{lemma}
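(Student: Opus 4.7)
The plan is an iterative greedy construction: attach one hub together with its connecting path at a time to a fixed core vertex $v \in V(G) \setminus W$, maintaining that the set of already-used vertices stays small enough for the expansion to be essentially preserved. Fix $c=1/200$ and pick $K=K(\ep_1,\ep_2,s)$ large enough. At each stage $j \in [h_0]$ (with $h_0 = c\sqrt{d}$ in case (i), $h_0=cd$ in case (ii)), let $U_j$ denote $W$ together with the vertex set of the partial unit built so far; the induction maintains $|U_j|=O(cdm^4)$ in case (i) and $O(cd^2m^4)$ in case (ii). Since $n \ge Kd$ (resp.\ $n\ge Kd^2$) and $d\ge \log^{s}n$, these bounds are much smaller than $n$, so $G-U_j$ remains close to an expander in the relevant range.

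To extend the partial unit at stage $j$, I would first locate a vertex $u_j\in V(G)\setminus U_j$ at which a hub $H(u_j)\subseteq G-U_j$ can be greedily assembled, and then connect $u_j$ to $v$ by a path of length at most $2m$ avoiding $U_j\cup (V(H(u_j))\setminus\{u_j\})$. For the hub: pick $u_j$ with many unused neighbors (certainly possible since $\delta(G)\ge d$ and $h_1\le \max\{m^4,cd\}\ll d$), take $h_1$ of them as $S_1(u_j)$, and for each $z\in S_1(u_j)$ extract $h_2$ further-neighbors not yet used. Case (ii) is particularly clean: $C_4$-freeness forces the sets $N(z)\setminus\{u_j\}$ to be pairwise disjoint across $z\in N(u_j)$, so disjointness is automatic and it suffices that each $z$ has at least $h_2=cd$ neighbors outside $U_j$. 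In case (i), greedy extraction works because the total number of vertices already claimed within $N(z)\setminus\{u_j\}$ never exceeds $h_1 h_2 = cm^{4}\sqrt{d}$, which is still well below $d$.

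For the short $v$-$u_j$ path I would apply Lemma~\ref{lem: small-diameter-lemma}. Since this lemma requires endpoint sets of size at least $k=\ep_2 d$ (resp.\ $\ep_2 d^{2}$), I first run BFS from $v$ and from $u_j$ inside $G-U_j-(V(H(u_j))\setminus\{u_j\})$ until each ball reaches a size $x$ with $x\cdot\ep(x)/4\ge |U_j\cup V(H(u_j))|$; by the expansion property (with $\delta(G)\ge d$ providing the first layer), this takes $o(m)$ layers. Lemma~\ref{lem: small-diameter-lemma} then links the two balls by a path of length at most $\frac{2}{\ep_1}\log^{3}(15n/k)$, which is at most $m/2$ for $m$ chosen as in our setup, so stitching together gives total length at most $2m$ as required.

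The main obstacle is controlling the fraction of neighbors of a candidate hub center (and its middle vertices $z$) that happen to lie inside $U_j$, because the maximum degree of $G$ is not bounded a priori: a single high-degree vertex of $U_j$ could in principle dominate the neighborhoods we care about. I would handle this either by a preprocessing step reducing to a subgraph with $\Delta=O(d)$ while preserving the expansion (by standard edge-trimming, possibly at the cost of halving the expansion constants), or by an averaging argument inside the BFS ball around $v$: the total $U_j$-degree is $O(|U_j|\cdot\Delta)$, so the vertices with many $U_j$-neighbors form only a small fraction of the ball, leaving abundant hub-capable candidates. A final bookkeeping check confirms that $|U_{h_0}|$ stays within the claimed bounds throughout the induction, so the greedy process does not get stuck and produces a unit with the stated parameters.
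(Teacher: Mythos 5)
Your iterative greedy approach (fix a core vertex $v$ up front, then attach hubs one by one) diverges from the paper's construction and has a genuine gap that you yourself flag as ``the main obstacle'' without resolving it. The trouble is that after growing $U_j$ to size $\Theta(\kappa^2 m^4)\gg d$, the hypothesis $\delta(G)\ge d$ no longer guarantees anything about $N(v)\setminus U_j$ or $N(z)\setminus U_j$ for the candidate hub middles $z$: since $\Delta(G)$ is not bounded, a fixed vertex can have all of its neighbours swallowed by $U_j$, and then neither the BFS ball around $v$ nor the greedy second level of the hub can even get started. Your two proposed repairs do not work as stated: trimming to $\Delta=O(d)$ is incompatible with keeping $\delta\ge d/2$ in general (a low-degree vertex can be adjacent only to high-degree ones and lose everything), and the averaging bound $O(|U_j|\cdot\Delta)$ is vacuous precisely because $\Delta$ is unbounded. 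An averaging argument can be made to work, but only at the level of \emph{average} degree, and it must be coupled with the forbidden-subgraph hypothesis: the paper first proves (Lemma~\ref{lem: robust degree} for the $TK^{(2)}_{\sqrt d}$-free case via dependent random choice, Corollary~\ref{cor: c4-free robust degree} for the $C_4$-free case via K\H{o}v\'ari--S\'os--Tur\'an) that $d(G-W)\ge d/2$ for any such medium $W$, then passes to a subgraph $H\subseteq G-W$ with $\delta(H)\ge d/4$ and builds the hub entirely inside $H$, where greedy extraction is safe. This step is missing from your proposal.

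The second, related gap is structural: the paper deliberately does \emph{not} fix the core vertex in advance. Instead it builds $m^6$ candidate hubs $H(w_i)$ and $\kappa m^6$ hubs $H(u_j)$, takes a \emph{maximal} family $\mathcal{P}$ of short connecting paths subject to disjointness constraints, and then shows by a pigeonhole/maximality argument (Claim~\ref{claim: find unit}) that some $w_i$ is already joined to $c\kappa$ of the $u_j$'s. The maximality step invokes Lemma~\ref{lem: small-diameter-lemma} between two \emph{large} sets (unions of leftover hub neighbourhoods, of size $\ge c^2\kappa^2m^6\ge k$), so it never relies on expansion out of a single vertex. This neatly sidesteps the problem in case~(ii), where a single BFS layer from $v$ has size only $\sim d=\kappa\ll k=\ep_2\kappa^2$, so your plan of growing a ball in ``$o(m)$ layers'' would itself need a careful expansion/deletion analysis that is not obviously available once $U_j$ is large. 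In short: the skeleton of your approach is plausible, but to make it rigorous you would need (a) the robust average-degree preservation lemma before each greedy hub extraction, and (b) either a reservation scheme protecting a $k$-sized ball around $v$ throughout the process, or, as the paper does, a switch to a ``build many, pick one by maximality'' argument that avoids singling out $v$ prematurely.
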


We shall make use of the following result to build a desired adjuster robustly.

\begin{lemma}\label{lem: robust adjuster}
	There exists some $\ep_1>0$ such that, for every $0<\ep_2<1/5$ and integer $s\ge 240$, there exists $d_0$ and $K$ such that the following holds for each $n\ge d\ge d_0$ and $d\ge \log^sn$. \\
	{\rm(\romannumeral1)} Let $G$ be a $\mathsf{T}K_{\sqrt{d}}^{(2)}$-free n-vertex $(\ep_{1},\ep_{2}d)$-expander with $\delta(G)\ge d$ and $n\ge Kd$. Let $D=dm^4/10^7$. Let $W\subseteq G$ satisfy $\abs{W}\le D/\log^3\frac{n}{d}$. Then $G-W$ contains a $(D,m,r)$-adjuster for any $r\le \frac{1}{10}dm^2$. \\
	{\rm(\romannumeral2)} Let $G$ be a $C_4$-free n-vertex $(\ep_{1},\ep_{2}d^2)$-expander with $\delta(G)\ge d$ and $n\ge Kd^2$. Let $D=d^2m^4/10^7$. Let $W\subseteq G$ satisfy $\abs{W}\le D/\log^3\frac{n}{d^2}$. Then $G-W$ contains a $(D,m,r)$-adjuster for any $r\le \frac{1}{10}d^2m^2$.
\end{lemma}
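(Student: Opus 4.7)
The plan is to follow the Liu--Montgomery framework: first construct a \emph{simple} $(D,m,1)$-adjuster whose two ends have the prescribed large size $D$, and then iteratively chain $r$ such simple adjusters into the desired $(D,m,r)$-adjuster. I would treat both parts (i) and (ii) uniformly, writing $d^\ast$ for $d$ in case (i) and $d^2$ in case (ii), so that the expansion threshold is $\epsilon_2 d^\ast$ and the target end-size is $D=d^\ast m^4/10^7$.

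For Step~1 I would pick a vertex $v_0\in V(G)\setminus W$ and grow a BFS ball for $m$ rounds, using $\delta(G)\ge d$ together with the $(\epsilon_1,\epsilon_2 d^\ast)$-expansion property at each layer; the $TK_{\sqrt{d}}^{(2)}$-freeness (resp.\ $C_4$-freeness) is what lets the ball grow to size at least $D$ by forcing sufficient density of successive neighbourhoods, as in the spirit of Proposition~\ref{prop: d2-d expander}. Inside this ball I would invoke the octopus construction (Definition~\ref{octopus}) to carve out two distinct vertices $v_1,v_2$ each supporting a $(D,m)$-expansion $F_1,F_2$ (shrunk to size exactly $D$ by Proposition~\ref{prop: expansion}), together with a small center set $A_0$ of size $O(m)$ housing two internally disjoint $v_1,v_2$-paths whose lengths differ by exactly~$2$. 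This gives a simple adjuster of the required robustness.

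For Step~2 I would build $(D,m,j)$-adjusters $\mathcal{A}_j$ inductively for $j=1,\dots,r$. Given $\mathcal{A}_{j-1}=(v_1,F_1,v_2^{(j-1)},F_2^{(j-1)},A_{j-1})$, I would construct a fresh simple adjuster $\mathcal{B}_j=(w_1,G_1,w_2,G_2,B_j)$ in $G-W-V(\mathcal{A}_{j-1})$ using Step~1, and then apply Lemma~\ref{lem: small-diameter-lemma} to find a short path $P_j$ of length $O(\log^3(n/d^\ast))$ from $F_2^{(j-1)}$ to $G_1$ that avoids all previously used vertices. Absorbing $F_2^{(j-1)}$, $P_j$, $G_1$ and the gadget portion $B_j$ into a new center set $A_j$, and keeping $F_2^{(j)}:=G_2$ as the right end, yields a $(D,m,j)$-adjuster whose $v_1,v_2$-paths take all lengths in an arithmetic progression $\{l_j,l_j+2,\ldots,l_j+2j\}$. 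The bookkeeping $|A_j|\le 10mj$ follows because at each stage only $O(m)$ genuine center vertices are added (the absorbed ends are required to route the path but the parity gadget at each chain-link contributes only $O(m)$ vertices to $A$). The feasibility of this iteration is a straightforward budget check: the set to avoid has size at most $|W|+O(rD)\le D/\log^3(n/d^\ast)+O(d^\ast{}^2 m^6)$, which stays well below the sublinear threshold $x\epsilon(x)/4$ required by Lemma~\ref{lem: small-diameter-lemma}, using $d\ge \log^{240}n$ and $r\le d^\ast m^2/10$.

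The hard part, I expect, is Step~1: prior simple-adjuster constructions (in \cite{LM2,WY}) only needed polylogarithmic-sized ends, so they could be assembled with ad hoc BFS arguments. Here the target size $D\sim d^\ast m^4$ is much larger, and the construction must simultaneously provide \emph{two} large expansions on either side of a short parity gadget producing paths whose lengths differ by exactly~$2$, all in the same BFS ball, all disjoint from $W$, and without creating a $TK_{\sqrt d}^{(2)}$ (resp.\ $C_4$). The new octopus gadget from Definition~\ref{octopus} is designed precisely to package many parallel expansions in a compatible way, and coordinating its branches with the parity gadget and with the forbidden-subgraph constraint is the delicate combinatorial heart of the argument.
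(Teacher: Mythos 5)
Your Step~2 (the iterative chaining of a simple adjuster onto a $(D,m,j-1)$-adjuster using Lemma~\ref{lem: small-diameter-lemma}, then redefining the ends to be the outermost two expansions) matches the paper's induction on $r$, with one caution: the center set of the new adjuster must absorb only the short connecting path $Q$ (length $\le 3m$) that runs through the two old ends, not the entire expansions $F_2^{(j-1)}$ and $G_1$, which have size $D\gg m$; otherwise the bound $\abs{A_j}\le 10mj$ fails immediately. Your parenthetical aside suggests you sense this, but the phrasing ``absorbing $F_2^{(j-1)},P_j,G_1$ and $B_j$ into a new center set'' is wrong as written. The paper takes $(v_2,F_2,v_4,F_4,A_1\cup A_2\cup V(Q))$: the old ends $F_1,F_3$ are simply dropped, and only the path vertices through them enter the center set.

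The genuine gap is in Step~1. You frame the simple-adjuster construction as BFS-ball growth with the $TK_{\sqrt d}^{(2)}$-free (resp.\ $C_4$-free) condition ``forcing sufficient density of successive neighbourhoods,'' and you place the octopus ``inside this ball'' to ``carve out'' two expansions. That is not how the paper's Lemma~\ref{lem: robust simple adjuster} works, and the BFS framing does not by itself yield the needed structure. The paper instead: (a) uses the forbidden-subgraph condition only through Lemma~\ref{lem: robust degree} (resp.\ Corollary~\ref{cor: c4-free robust degree}), i.e.\ to guarantee that after deleting a vertex set of size $\sim \kappa^2 m^{30}$ the average degree stays $\ge d/2$, so a fresh sub-expander can be extracted via Corollary~\ref{cor: bipartite expander}; (b) obtains the parity gadget from a \emph{shortest even cycle} $C$ of length $2r\le m/16$ in that sub-expander (choosing $v_1,v_2$ at distance $r-1$ on $C$ gives two internally disjoint paths of lengths $r-1$ and $r+1$); (c) attaches small $(\kappa^2/800,m/400)$-expansions to $v_1,v_2$ using the degree condition; and then (d) repeats this to produce a whole family of small vertex-disjoint adjusters, which it aggregates via the octopus: Claim~\ref{claim: connect candy} and the small-diameter lemma connect one end of a chosen core adjuster to $\Theta(m^4)$ other small adjusters by short internally disjoint paths, and the union of those attached ends (minus the few vertices of the connecting paths they share) is itself a $(\Theta(\kappa^2 m^4),m/4)$-expansion. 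So the large ends are \emph{built} out of many small adjuster ends glued by short paths; they are not found inside a single BFS ball, and the forbidden-subgraph hypothesis plays no role in any ``ball-growth'' argument. Without this two-stage construction — small adjusters from a shortest cycle, then octopus aggregation to reach size $D$ — Step~1 is not complete.
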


The following lemma helps us find two paths respectively connecting two pairs of vertex sets whilst avoiding a smaller vertex set.

\begin{lemma}\label{cor: connect four}
	For any $0<\ep_1,\ep_2<1$, there exists $d_0$ and $K$ such that the following holds for each $n\ge d\ge d_0$ and $d\ge \log^sn$. \\
	{\rm(\romannumeral1)} Let $G$ be a $\mathsf{T}K_{\sqrt{d}}^{(2)}$-free n-vertex $(\ep_{1},\ep_{2}d)$-expander with $\delta(G)\ge d$ and $n\geq Kd$. Let $D=dm^4/10^7$, $l\le dm^3$. Let $W\subseteq V(G)$ satisfy $\abs{W}\le D/\log^3\frac{n}{d}$, or \\
	{\rm(\romannumeral2)} Let $G$ be a $C_4$-free n-vertex $(\ep_{1},\ep_{2}d^2)$-expander with $\delta(G)\ge d$ and $n\geq Kd^2$. Let $D=d^2m^4/10^7$, $l\le d^2m^3$. Let $W\subseteq V(G)$ satisfy $\abs{W}\le D/\log^3\frac{n}{d^2}$. \\
	Let $U_i\subseteq V(G)-W$ be disjoint vertex sets of size at least $D$, $i\in \{1,2\}$, and $F_j\subseteq G-W-U_1-U_2$ be vertex-disjoint $(D,m)$-expansion of $v_j$, $j\in \{3,4\}$. Then, $G-W$ contains vertex-disjoint paths $P$ and $Q$ with $l\le l(P)+l(Q)\le l+13m$ such that both $P$ and $Q$ connect $\{v_1,v_2\}$ to $\{v_3,v_4\}$ for some $v_i\in U_i$ with $i\in \{1,2\}$.
\end{lemma}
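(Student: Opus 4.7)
The plan is to build the paths $P$ and $Q$ one after the other, first producing $P$ and then finding $Q$ inside the residual graph $G-W-V(P)$. I split the target total length as $l=l_1+l_2$ with $l_1,l_2\approx l/2$ and aim for $l(P)\in [l_1,l_1+6m]$ and $l(Q)\in [l_2,l_2+7m]$, which is enough to place the sum in $[l,l+13m]$.

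Each path is produced in three stages. Consider $P$. First, inside $G-W-U_2-F_4$ I apply Lemma~\ref{lem: small-diameter-lemma} to bring some $v_1\in U_1$ to a chosen ``anchor'' vertex via a short path of length at most $\tfrac{2}{\ep_1}\log^3(15n/k)\le m/4$; this bound uses $m\ge 80\log^4(n/d)$ (or $80\log^4(n/d^2)$ in case~(ii)), which dominates the small-diameter estimate because $d\ge \log^{s}n$ with $s\ge 240$. Then I extend the path through a ``length carrier'' inside $G-W-U_2-F_4-F_3$, which amounts to iterating Lemma~\ref{lem: small-diameter-lemma} through $\Theta(l_1/m)$ consecutive buffer sets, each of size at least $k$. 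Finally, I invoke Lemma~\ref{lem: small-diameter-lemma} once more to enter $F_3$ and use that $v_3$ is at distance at most $m$ from any vertex of $F_3$ to reach $v_3$ inside $F_3$.

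The length-carrier stage is the crux. Since each small-diameter hop contributes at most $m/4$, after $\Theta(l_1/m)$ hops the accumulated length is $\Theta(l_1)$; the aim is to land it in a window of width $O(m)$ around $l_1$. I would achieve this by performing an initial batch of hops each of length close to the maximum $m/4$ (so the running total increases in controlled increments of at most $m/4$), followed by one or two ``fine'' hops through smaller buffer sets to bring the total within $O(1)$ of $l_1$, and finally a parity adjustment by taking one extra step along the in-$F_3$ path to $v_3$ if needed. Throughout, the forbidden-set condition required by Lemma~\ref{lem: small-diameter-lemma} is checked using $|W|\le D/\log^3(n/d)$ together with the crude bound $|V(\text{partial path})|\le l_1+O(m)\ll D$.

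The construction of $Q$ is symmetric: repeat the same procedure for $U_2$, $v_4$, and $F_4$ inside $G-W-V(P)$. Since $|V(P)|\le l_1+O(m)\ll D$, all size hypotheses remain valid, $U_2\setminus V(P)$ and $F_4$ still have size at least $D-o(D)$, and vertex-disjointness of $P$ and $Q$ is automatic from the deletion of $V(P)$. The hard part will be the length calibration in the length-carrier stage: forcing the sum of $\Theta(l_1/m)$ hop lengths to land in a window of width $O(m)$, rather than being $\Theta(l_1)$ away from $l_1$, requires genuinely using the flexibility in choosing each buffer set so that the next hop has predictable length, combined with the in-$F_3$ and parity adjustments at the very end; the rest of the argument is routine bookkeeping on the sizes of the forbidden sets.
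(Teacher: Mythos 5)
Your proposal diverges from the paper's approach, and contains a genuine gap precisely in the place you flag as the ``hard part.''

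The paper's proof is much shorter and shifts all the work onto Lemma~\ref{lem: connect two}. It first applies Lemma~\ref{lem: small-diameter-lemma} once to draw a short path $P$ of length at most $2m$ from some $v_1\in U_1$ into $F_3$, then applies Lemma~\ref{lem: connect two} with $W'=W\cup V(P)$, $U=U_2$, $F=F_4$ to get a single path $Q$ of length in $[l,l+11m]$, so $l(P)+l(Q)\in[l,l+13m]$. All the length calibration is encapsulated in Lemma~\ref{lem: connect two}, which works by a maximal-path argument: one maximises $l(P)$ over triples $(P,v_1,F_1)$ where $P$ is a $v,v_1$-path, $l(P)\le l+7m$, and $F_1$ is a $(D,3m)$-expansion of $v_1$ hanging off the end. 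If $l(P)<l$, one invokes the robust unit construction (Lemma~\ref{lem: robust unit}) in $G-W-V(P\cup F_1)$ to produce a fresh unit, connects its exterior to $F_1$ by Lemma~\ref{lem: small-diameter-lemma}, and thereby extends $P$ by a path of length between $1$ and $7m$ while simultaneously producing a new $(D,3m)$-expansion at the new endpoint, contradicting maximality. The crucial point is that every extension step carries with it a replacement expansion, so the process can be iterated until $l(P)\ge l$ is forced, and the last step's bound of $7m$ gives $l(P)\in[l,l+7m]$.

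Your ``length-carrier'' idea cannot be made to work as stated. Lemma~\ref{lem: small-diameter-lemma} provides only an \emph{upper} bound on hop length, so after $\Theta(l_1/m)$ hops through buffer sets the cumulative length lies anywhere in an interval of width $\Theta(l_1)$, not $O(m)$. You gesture at choosing each buffer set ``so that the next hop has predictable length,'' but nothing in the expander machinery you cite affords any lower bound on, or predictability of, a single hop. Moreover, if instead you try to take $\Theta(l_1)$ unit-length steps so that each increment is controlled, you need $\Theta(l_1)$ pairwise-disjoint buffer sets each of size at least $k=\ep_2 d$, which is roughly $l\cdot\ep_2 d\approx \ep_2 d^2 m^3$ vertices and need not fit in $n$ (we only have $n\ge Kd$). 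The missing ideas are precisely the robust unit (Lemma~\ref{lem: robust unit}) and the maximal-path trick of Lemma~\ref{lem: connect two}: they replace the impossible task of controlling each hop by a stopping argument where each step lengthens the path by at least $1$ and at most $7m$ and hands over a fresh expansion to continue from, so no per-hop lower bound is ever needed.
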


Throughout the rest of this paper, we always choose
\begin{equation}
\kappa=\sqrt{d} \ ({\rm or\ } d {\ \rm for\ } C_4\text{-}{\rm free\ case})\label{kappa}
\end{equation}
and recall that $m$ is the smallest even integer which is larger than $80\log^{4}\frac{n}{\kappa^2}$. As $n/\kappa^2\ge K$, when $K$ is sufficiently large, we obtain that $n/\kappa^2$ and also $m$ are sufficiently large, and
\begin{equation}
n/\kappa^2 \ge m^a\label{mm}
\end{equation}
for any given constant $a$. Recall that $\ep(x)$ is decreasing, and since $n/\kappa^2\ge K$ is sufficiently large,
\begin{equation}\ep(n)=\frac{\ep_1}{\log ^2(15n/\ep_2\kappa^2)}\ge \frac{10^5}{\log^3(n/\kappa^2)}\ge \frac{10^5}{m}.\label{unit5}
\end{equation}

Now we are ready to prove Lemma \ref{thm: dense}.

\begin{proof}[Proof of Lemma~\ref{thm: dense}.]
    The proof ideas of (\romannumeral1) and (\romannumeral2) are similar, and we choose $\kappa$ as in \eqref{kappa}. Note that we shall assume that $G$ is $\mathsf{T}K_{\kappa}^{(2)}$-free for (\romannumeral1), otherwise the proof is done. Let $c'=1/200$ and $z=m^3$. We choose $K$ and $d_0$ to be sufficiently large.
	
	For (\romannumeral1), if $n<K\kappa^2$, then by Lemma \ref{lem: fox-sudakov} with $\ep=1/K$, $G$ contains a balanced subdivision of a clique of size $\ep \sqrt{n}$. For (\romannumeral2), if $n<K\kappa^2$, then by Lemma \ref{lem: c4-free dense} with $c_1=1/K$, there exists $c_1'>0$ such that $G$ contains a balanced subdivision of a clique of size $c_1' \sqrt{n}$. Now it remains to consider the case $n/\kappa^2\ge K$. Since $n/\kappa^2\ge K$, we have $m\ge 80\log^4K$. We claim that we can greedily pick in $G$ a collection $\{M_1,\cdots,M_{c'\kappa}\}$ of $(c'\kappa,m^4,c'\kappa,2m)$-units with disjoint interiors. Indeed, this is possible by applying Lemma \ref{lem: robust unit} on $G$ with $W$ being the interiors of $(c'\kappa,m^4,c'\kappa,2m)$-units we have constructed in each stage of the process, and by the fact that the size of interiors of $c'\kappa$ such units is at most $c'\kappa(2m+1+m^4)\cdot c'\kappa\le 2c'^2\kappa^2m^4$.
	By the pigeonhole principle, we can find $c'\kappa/2$ such units among them such that their core vertices are in the same part of the bipartition for $G$. Without loss of generality, these units are $M_1,\cdots,M_{c'\kappa/2}$ with core vertices $w_1,\cdots,w_{c'\kappa/2}$ and denote by $u_{i,j}$ the center of the $j$-th hub in $M_i$, where $1\le i\le c'\kappa/2$ and $1\le j\le c'\kappa$. Let $W$ be the union of the vertices in the $w_i,u_{i,j}$-paths in all the units, including their endvertices. Then $\abs{W}\le c'\kappa\cdot (2m+1)\cdot c'\kappa/2\le 2c'^2\kappa^2 m$.
	We will construct a $\mathsf{T}K_{c'\kappa/4}^{(z)}$ as follows.
	Let $\mathcal{P}$ be a maximum collection of paths under the following rules.

    \begin{itemize}
	\item[$\mathbf{B1}$] Each path connects one pair of center vertices of hubs from different units, such that it can be extended to a path of length $z$ connecting the core vertices of those two units.
	\item[$\mathbf{B2}$] All paths in $\mathcal{P}$ are pairwise disjoint, and the internal vertices of those paths are disjoint from $W$.
	\item[$\mathbf{B3}$] For each pair of units, there is at most one path in $\mathcal{P}$ between their respective hubs.
    \end{itemize}

    Let $W_1$ be the set of vertices of $\mathcal{P}$. Then by $\mathbf{B1}$ and $\mathbf{B3}$, $\abs{W_1}\le (m^3+1)\tbinom{c'\kappa/2}{2}\le c'^2\kappa^2m^3$. Let us call a unit \emph{bad} if more than $\kappa m^3$ vertices in the interior of this unit have been used in $\mathcal{P}$, and \emph{good} otherwise. Thus, as the interior of the units are disjoint, there are at most $\frac{c'^2\kappa^2m^3}{\kappa m^3}\le c'\kappa/4$ bad units.

    \begin{claim}\label{claim: connect units}
    For every pair of good units, there is a path in $\mathcal{P}$ between two of their respective hubs.
    \end{claim}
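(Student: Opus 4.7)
The plan is to prove the claim by contradiction against the maximality of $\mathcal{P}$. Suppose some pair of good units $M_i, M_{i'}$ has no connecting path in $\mathcal{P}$. I will construct a path satisfying \textbf{B1}--\textbf{B3} between one of $M_i$'s hubs and one of $M_{i'}$'s hubs, violating the maximality of $\mathcal{P}$.

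Set $W^{*} = W \cup W_1 \cup \bigcup_{r \notin \{i,i'\}}\mathrm{Int}(M_r)$, together with the vertices of $\mathrm{Int}(M_i) \cup \mathrm{Int}(M_{i'})$ already used by paths in $\mathcal{P}$. Using the estimates $\abs{W} \le 2c'^2\kappa^2 m$, $\abs{W_1} \le c'^2\kappa^2 m^3$ and $\sum_r\abs{\mathrm{Int}(M_r)} = O(c'^2\kappa^2 m^4)$, the choice $c'=1/200$ keeps $\abs{W^{*}}$ within the budget required by Lemma \ref{lem: robust adjuster} and Lemma \ref{cor: connect four}. Next I exploit the goodness of $M_i, M_{i'}$: since at most $\kappa m^3$ interior vertices of $M_i$ are used and each hub contains $m^4+1$ interior vertices, at least $c'\kappa/2$ hubs retain an exterior of size at least $\frac{1}{2}c'\kappa \cdot m^4$ disjoint from $W^{*}$. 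Pick such hubs $H(u_{i,j})$ in $M_i$ and $H(u_{i',j'})$ in $M_{i'}$, and let $U_1 \subseteq S_2(u_{i,j})\setminus W^{*}$ and $U_2 \subseteq S_2(u_{i',j'})\setminus W^{*}$ be vertex sets of size at least $D$, where $D = \kappa^2 m^4/10^7$.

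Apply Lemma \ref{lem: robust adjuster} inside $G - W^{*} - U_1 - U_2$ to obtain a $(D,m,r)$-adjuster $\mathcal{A}=(v_3, F_3, v_4, F_4, A)$ with $r = \Theta(\kappa^2 m^2)$. Then apply Lemma \ref{cor: connect four} with some target length $l$ to obtain disjoint paths $P, Q \subseteq G - W^{*}$ connecting $\{v_1, v_2\}$ to $\{v_3, v_4\}$, where $v_1 \in U_1, v_2 \in U_2$ and $l \le l(P)+l(Q) \le l + 13m$. Extending $P$ from $v_1$ back through $S_1(u_{i,j})$ to $u_{i,j}$ and then along the $w_i,u_{i,j}$-path inside $M_i$ to $w_i$ adds a known length at most $2m+2$; the same on the $M_{i'}$ side yields analogous extensions ending at $w_{i'}$. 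Using property \textbf{A4} of the adjuster, an appropriate $v_3,v_4$-path through $A$ then completes a $w_i,w_{i'}$-path whose length can be chosen to be exactly $\ell = m^3$ once $l$ is tuned so that the residual length falls in the arithmetic progression of length $2r$ and difference $2$ guaranteed by $\mathcal{A}$.

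Restricting attention to the sub-path between $u_{i,j}$ and $u_{i',j'}$ gives a path satisfying \textbf{B1} (it extends inside $M_i \cup M_{i'}$ to a $w_i,w_{i'}$-path of length $\ell$), \textbf{B2} (its internal vertices lie outside $W \cup W_1$ by construction), and \textbf{B3} (it is the first such path between $M_i$ and $M_{i'}$). Adding it to $\mathcal{P}$ contradicts maximality. The main obstacle is the bookkeeping: verifying that $\abs{W^{*}}$ fits the hypotheses of the two connection lemmas while ensuring that enough ``fresh'' hubs and exterior vertices survive in each good unit, and tuning $l$ so that the adjuster's reachable length window covers the exact remainder (a parity issue that is painless because $G$ is bipartite, $m$ is even, and the adjuster produces an arithmetic progression of step $2$).
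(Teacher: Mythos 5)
Your proof follows the same overall template as the paper's (contradict the maximality of $\mathcal{P}$; find an adjuster of the right size; use Lemma~\ref{cor: connect four} to link the two units' exteriors to the adjuster; use $\mathbf{A4}$ and bipartiteness to hit the target length exactly). However, two quantitative choices break the argument.

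First, your forbidden set $W^{*}$ is too large. You put $\bigcup_{r \notin \{i,i'\}}\mathrm{Int}(M_r)$ into $W^{*}$, which alone has size on the order of $c'^2\kappa^2 m^4$ (there are $\approx c'\kappa/2$ units, each with interior $\approx 2c'\kappa m^4$). But Lemma~\ref{lem: robust adjuster} and Lemma~\ref{cor: connect four} require the removed set to have size at most $D/\log^3(n/\kappa^2)$ with $D=\kappa^2 m^4/10^7$, and since $\log^3(n/\kappa^2)\approx (m/80)^{3/4}$, the budget is only on the order of $\kappa^2 m^{13/4}$. Your $W^{*}$ exceeds this by a factor of roughly $m^{3/4}$, and no choice of the constant $c'$ can fix a polynomial-in-$m$ overshoot. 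The paper avoids this by \emph{not} deleting the other units' interiors: rule $\mathbf{B2}$ only requires the new path to miss $W$ (the short $w_i,u_{i,j}$-paths) and $W_1=V(\mathcal{P})$, so it is perfectly allowed to pass through other units' interiors; the \emph{goodness} bookkeeping (at most $\kappa m^3$ interior vertices used per good unit) is precisely what controls the damage this causes later. The paper's $W'$ only contains $W$, $W_1$, and the two interiors of the units currently being connected, which has size $O(c'^2\kappa^2 m^3)$ — one factor of $m$ smaller, fitting the budget.

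Second, your sets $U_1, U_2$ cannot have size $D$. You take $U_1\subseteq S_2(u_{i,j})$ from a \emph{single} hub, but $\abs{S_2(u_{i,j})} = c'\kappa\cdot m^4$, whereas $D=\kappa^2 m^4/10^7$. Since $\kappa\ge m^{30}$, one hub's exterior is smaller than $D$ by a factor of about $\kappa$, so $\abs{U_1}\ge D$ is impossible. The paper instead takes the union $\bigcup_{k\in I}S_1(u_{1,k})$ over all $\abs{I}\ge c'\kappa/2$ hubs whose centres are still free (this is where $\mathbf{B3}$ enters), passes to the at least $c'\kappa m^4/4$ such interior vertices unused by $\mathcal{P}$, and then takes their exterior neighbourhoods: the pairwise disjointness of the $S_1(z)$'s yields $\abs{N_{M_1}(A_1)\setminus W}\ge c'^2\kappa^2 m^4/8 \ge 2D$, which is large enough. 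So the ``one hub per side'' picture must be replaced by ``a positive fraction of all hubs per side.'' Once $U_1,U_2$ have size $\ge D$, the remaining extension and length-tuning steps you describe (extend by at most $2m+2$ through the unit, tune $l'$ so that the adjuster window covers the remainder, observe that parity matches because $G$ is bipartite and the branch vertices lie in one part) do match the paper's argument.
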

    \begin{proof}
    Without loss of generality, we may assume for contradiction that $M_1$ and $M_2$ are a pair of good units for which there is no desired path in $\mathcal{P}$. Let $B$ be the union of the interiors of $M_1$ and $M_{2}$. Then we have $\abs{B}\le 2\cdot 2c'\kappa m^4$. Set $W'=W\cup W_1\cup B$. Note that $\abs{W'}\le 2c'^2\kappa^2m+c'^2\kappa^2m^3+4c'\kappa m^4\le 3c'^2\kappa^2m^3$.
    By $\mathbf{B3}$, $M_1$ (or $M_2$) has at least $c'\kappa/2$ hubs whose center vertices are not used in $\mathcal{P}$. Denote these hubs by $H(u_{1,k})$, $k\in I$ for some index set $I$ with $\abs{I}\ge c'\kappa/2$. Let $A_1$ be the set of vertices in $\cup_{k\in I}S_1(u_{1,k})$ not used in $\mathcal{P}$. Similarly, we define $A_2$ for $M_2$. As $M_1$ is good, we have

    $$\abs{A_1}\ge \cup_{k\in I}S_1(u_{1,k})-\kappa m^3\ge \frac{c'\kappa}{2} m^4-\kappa m^3\ge \frac{c'\kappa m^4}{4}.$$
    The last inequality holds as $m\ge 4/c'$ for sufficiently large $K$. As the hubs in $M_1$ are disjoint, we have

    $$\abs{N_{M_1}(A_1)\backslash W'}\ge \frac{c'\kappa m^4}{4} c'\kappa-3c'^2\kappa^2m^3\ge \frac{c'^2\kappa^2m^4}{8}.$$
    Similarly, $\abs{N_{M_{2}}(A_{2})\backslash W'}\ge c'^2\kappa^2m^4/8$.

    Set $D=\kappa^2m^4/10^7$. Recall that $n\ge K\kappa^2$ and $m$ is the smallest even integer which is larger than $80\log^{4}\frac{n}{\kappa^2}$. As $d\ge\log^sn\ge m^{60}$, for sufficiently large $K$, we have $\abs{W'}\le 3c'^2\kappa^2m^3 \le D/2\log^3\frac{n}{\kappa^2}$. By Lemma \ref{lem: robust adjuster} with $W=W'$, there is a $(D,m,21m)$-adjuster in $G-W'$, which is denoted by $\mathcal{A}=(v_1,F_1,v_2,F_2,A)$. By the definitions of expansion and adjuster, we have $\abs{A}\le 210m^2$, $l(\mathcal{A})\le \abs{A}+1\le 220m^2$ and $\abs{V(F_1)}=\abs{V(F_2)}=D$. Let $l'=z-21m-l(\mathcal{A})$. We have $0\le l'\le \kappa^2m^3$. Let $U_1'=N_{M_1}(A_1)\backslash (W'\cup V(F_1)\cup V(F_2))$ and $U_2'=N_{M_{2}}(A_{2})\backslash (W'\cup V(F_1)\cup V(F_2))$.
    As the size of $U_1'$ (or $U_2'$) is at least $c'^2\kappa^2m^4/8-2D\ge 2D$, there are disjoint vertex sets $U_1\subseteq U_1'$ and $U_2\subseteq U_2'$ such that $\abs{U_i}\ge D$, $i\in \{1,2\}$. As $d\ge\log^sn\ge m^{60}$, $\abs{A\cup W'}\le 210m^2+D/2\log^3\frac{n}{\kappa^2}\le D/\log^3\frac{n}{\kappa^2}$. By Lemma \ref{cor: connect four}, there are vertex-disjoint paths $P_1$ and $Q_1$ in $G-A-W'$ connecting $\{u_1,u_2\}$ to $\{v_1,v_2\}$ for some  $u_i\in U_i$, $i\in \{1,2\}$, and $l'\le l(P_1)+l(Q_1)\le l'+13m$. Without loss of generality, we can assume that $P_1$ is a $u_1,v_1$-path and $Q_1$ is a $u_2,v_2$-path.

    As $U_1$ is a subset of the exterior of $M_1$, there exists a path $P_2$ from $w_1$ to $u_1$ of length at most $2m+2\le 4m$ in $M_1$. Similarly, there exists a path $Q_2$ from $w_{2}$ to $u_2$ of length at most $2m+2\le 4m$ in $M_{2}$. Let $P=P_1\cup P_2$ and $Q=Q_1\cup Q_2$. Thus, $P$ is a $w_1,v_1$-path and $Q$ is a $w_{2},v_2$-path, with $l'\le l(P)+l(Q)\le l'+21m$.

    Now, $l(\mathcal{A})\le z-l(P)-l(Q)\le l(\mathcal{A})+21m$. As $G$ is a bipartite graph and $w_1$, $w_{2}$ are in the same part, $l(\mathcal{A})$ and $z-l(P)-l(Q)$ have the same parity. Thus there is a $v_1,v_2$-path in $G[A\cup \{v_1,v_2\}]$ of length $z-l(P)-l(Q)$, $R$ say, such that $P\cup R\cup Q$ is a $w_1,w_{2}$-path of length $z$ which satisfies $\mathbf{B1}$-$\mathbf{B3}$, contradicting the maximality of $\mathcal{P}$.
    \end{proof}
    Finally, for every pair of good units $M_i,M_j$, there exists a $w_i,w_j$-path of length $z$, and by $\mathbf{B2}$ these paths are disjoint outside of their endvertices. As there are at least $c'\kappa/4$ units are good, we have a $\mathsf{T}K_{c'\kappa/4}^{(z)}$. This finishes the proof by taking $c=\min\{c'/4,\ep,c_1'\}$.
\end{proof}

\section{Proof of main tools}\label{sec:dense proof}

\subsection{Constructing units}\label{subsec: robust unit}

In this section, we find a collection of units whose interiors are pairwise disjoint, and we prove this by iteratively constructing a unit whilst avoiding the interiors of previous units. To achieve this, we first prove that every $\mathsf{T}K_{\sqrt{d}}^{(2)}$-free (or $C_4$-free) graph maintains its average degree while deleting a vertex set in Lemma \ref{lem: robust degree} (or in Corollary \ref{cor: c4-free robust degree}). Then by applying Lemma \ref{lem: robust hub} (or Lemma \ref{lem: c4-free robust hub}), we can greedily pick a collection of vertex-disjoint hubs of certain types, and connect them with internally vertex-disjoint short paths such that one of the hubs would be linked to many others (see Claim \ref{claim: find unit}), forming the desired unit.

We first show that a $\mathsf{T}K_{\sqrt{d}}^{(2)}$-free graph keeps its average degree while deleting a medium-sized vertex set.

\begin{lemma}\label{lem: robust degree}
    Let $s\ge 8x>0$. There exists $K=K(s,x)$ such that the following holds for each $n$ and $d$ satisfying $n\ge Kd$ and $d\ge \log^{s}n$. If $G$ is a $\mathsf{T}K_{\sqrt{d}}^{(2)}$-free $n$-vertex graph with $\delta(G)\ge d$, then for any vertex set $W\subseteq V(G)$ of size at most $dm^x$, we have $d(G-W)\ge \frac{d}{2}$.
\end{lemma}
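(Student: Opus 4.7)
The plan is to argue by contradiction: suppose $d(G-W) < d/2$, and produce a $TK_{\sqrt{d}}^{(2)}$ inside $G$, contradicting the hypothesis. First, note that the parameter constraints let us bound $|W|$ quite strongly. Since $m \le 100\log^4(n/d)$ (for $n/d$ sufficiently large) and $d \ge \log^s n$ with $s \ge 8x$, we get $\log^{4x}n \le \sqrt{d}$, so
$$|W| \le d\, m^x \le 100^x\, d\, \log^{4x}(n/d) \le 100^x\, d^{3/2}.$$
In particular, for $K = K(s,x)$ large enough we also have $|W| \le n/2$.

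Next, the hypothesis $\delta(G) \ge d$ combined with the assumption $d(G-W) < d/2$ gives, by a direct double-count,
$$2e(G-W) = \sum_{v \in V\setminus W}\bigl(d_G(v) - |N(v)\cap W|\bigr) \ge (n-|W|)d - e(W, V\setminus W),$$
so that $e(W, V\setminus W) \ge d(n-|W|)/2 \ge dn/4$. Thus the bipartite graph between $W$ and $V\setminus W$ carries a large number of edges relative to the small side $W$.

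The remainder of the proof extracts a sufficiently dense subgraph of $G$ to which Fox--Sudakov's lemma (Lemma~\ref{lem: fox-sudakov}) applies to produce $TK_{\sqrt{d}}^{(2)}$. A pigeonhole argument on $V\setminus W$ yields a subset $U$ of ``high-multiplicity'' vertices (each with $\ge d/8$ neighbours in $W$), with $|U| = \Omega(n/\sqrt{d})$ and $e(W, U) = \Omega(dn)$. Applying Mader's theorem to $G[W\cup U]$ (after a possible dyadic refinement of $U$ by the values $|N(u)\cap W|$) yields a subgraph $H'$ of minimum degree $\delta(H') = \Omega(d)$. A careful accounting calibrates $H'$ so that $|V(H')| \le \delta(H')^2/(4d)$, at which point Fox--Sudakov applied to $H'$ produces a $TK_{\sqrt{d}}^{(2)}$.

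The hard part is this last extraction step: one must carefully balance $|W|$, the density of $U$, and the size of $H'$ in order to hit the threshold $|V(H')| \le \delta(H')^2/(4d)$ required by Fox--Sudakov. This involves a case analysis depending on the regime of $n$ versus $d$ (e.g.\ $n \lesssim d^2$ versus $n \gg d^2$), and the hypothesis $s \ge 8x$ is precisely what is needed to absorb the polylogarithmic factors that appear in the dyadic decomposition and in the Mader extraction.
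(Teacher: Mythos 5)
Your proposal begins the same way the paper does — assume $d(G-W) < d/2$, double-count to get $e(W, V\setminus W) \ge \frac{d}{2}|G-W|$ — but then you diverge: you try to extract a dense subgraph of $G[W\cup U]$ to which Fox--Sudakov (Lemma~\ref{lem: fox-sudakov}) applies as a black box. The paper instead invokes a \emph{bipartite} dependent random choice lemma (Lemma~\ref{depend} in the appendix) directly on the bipartite graph $(W, V(G-W))$, with $V_1 = W$ as the small side, density $\alpha \ge 1/(2m^x)$, and $t = \log_{8m^x}\sqrt{d}$; this produces a set $A_0 \subseteq W$ of $\sqrt{d}$ vertices such that every pair has at least $d$ common neighbours in $V(G-W)$, which immediately gives the $TK_{\sqrt{d}}^{(2)}$.

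The gap in your approach is the extraction step, and I do not think it can be repaired. The obstruction is exactly the skewness you note: $|W| \le 100^x d^{3/2}$ is small, while the other side $U$ has size ranging anywhere from $\Omega(n/\sqrt{d})$ up to $n$. Because the edges you control are precisely those between $W$ and $V\setminus W$, any subgraph $H'$ you extract from $G[W\cup U]$ has $\delta(H')$ capped at roughly $|W|$ (vertices of $U$ have no other useful neighbours), while simultaneously $|V(H')| > \delta(H')$. Concretely: if $|U|\approx n$ the average degree of $G[W\cup U]$ is only $\Theta(d)$, so Mader extraction gives $\delta(H') = \Theta(d)$, and the calibration $|V(H')| \le \delta(H')^2/(4d) = \Theta(d)$ is incompatible with $|V(H')| > \delta(H')$; if $|U| \approx n/\sqrt{d}$ the average degree rises to $\Theta(d^{3/2})$, but then you need $|V(H')| = O(d^2)$, which fails whenever $n \gg d^{5/2}$, and Mader extraction gives no control on $|V(H')|$. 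The dyadic refinement you sketch does not fix this, because it does not break the ceiling $\delta(H') \le |W|$ or shrink the graph to the required size. The paper's bipartite DRC avoids all of this because its guarantee is measured against $|V_1| = |W|$ and the edge density $\alpha = e/(|V_1||V_2|)$, not against the total vertex count $n'^2$, so the small side is a help rather than a hindrance. If you want to salvage your route, you would essentially need to reprove a bipartite DRC from scratch, which is what Lemma~\ref{depend} does.
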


\begin{proof}
	It is easy to see this lemma holds when $\abs{W}\le \frac{d}{2}$. Suppose to the contrary that there exists some $W$ such that $d(G-W)< \frac{d}{2}$ when $\abs{W}> \frac{d}{2}$. By \eqref{mm}, we have $\abs{W}\le dm^x \le n/2$, then
	$$e(V(G-W),W)= \sum\limits_{v\in V(G-W)}d_{G}(v)-2e(G-W)\ge \frac{d}{2}\abs{G-W}.$$
	By Lemma \ref{depend} with $(V_1, V_2,a,c,r)=(W,G-W,\sqrt{d},d,2)$ and
	$$\alpha=\frac{e(V(G-W),W)}{\abs{G-W}\abs{W}}\ge \frac{\frac{d}{2}\abs{G-W}}{\abs{G-W}\cdot dm^x}=\frac{1}{2m^x},$$
	if there exists some $t$ such that
	$$\left(\frac{1}{2m^x}\right)^t\cdot\frac{d}{2}-\binom{dm^x}{2}\left(\frac{d}{n/2}\right)^t\ge \sqrt{d},$$
	then $G$ contains a $\mathsf{T}K_{\sqrt{d}}^{(2)}$, which is a contradiction. We take $t=\log_{8m^x}\sqrt{d}$. Note that $t\ge 1$ as $s\ge 8x$. Then we have
	\begin{equation}
	\left(\frac{1}{2m^x}\right)^t\cdot\frac{d}{2}\ge \frac{4}{(8m^x)^t}\cdot\frac{d}{2}=2\sqrt{d},\label{degree1}
	\end{equation}
	and
	\begin{equation}
	\binom{dm^x}{2}\left(\frac{d}{n/2}\right)^t\le \frac{d^2m^{2x}}{(\frac{n}{2d})^t}\le \frac{d^2m^{2x}}{(8m^x)^{8t}}\le \frac{\log^{8x}n}{\log^{2s}n}\le 1.\label{degree2}
	\end{equation}
	The second inequality in \eqref{degree2} holds by \eqref{mm}.
	By \eqref{degree1} and \eqref{degree2}, we have
	$$\left(\frac{1}{2m^x}\right)^t\cdot\frac{d}{2}-\binom{dm^x}{2}\left(\frac{d}{n/2}\right)^t> \sqrt{d}.$$
	The proof is complete.
\end{proof}

Then we show that a $C_4$-free graph maintains its average degree while deleting a vertex set of moderate size through the following result of K\H{o}v{\'a}ri, S{\'o}s and Tur{\'a}n \cite{KST}.

\begin{lemma}[\cite{KST}]\label{lem:kst}
    Let $G=(A,B)$ be a bipartite graph that does not contain a copy of $K_{s,t}$ with $t$ vertices in $A$ and $s$ vertices in $B$. Then
    \begin{equation*}
        \left| A\right| \binom{\overline{d}(A)}{s}\leq t\binom{\left| B\right| }{s},
    \end{equation*}
    where $\overline{d}(A)=\sum_{v\in A}\frac{d(v)}{\left| A\right| }$ is the average degree in $G$ of the vertices in $A$.
\end{lemma}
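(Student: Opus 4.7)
The plan is a classical double-counting argument (essentially the original K\H{o}v\'ari--S\'os--Tur\'an proof). I would enumerate in two different ways the set of pairs $(v,S)$ with $v\in A$, $S\subseteq N(v)$ and $|S|=s$; that is, $s$-subsets of $B$ sitting inside the neighbourhood of a single vertex of $A$.

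Counting first by the choice of $v$: the total is $\sum_{v\in A}\binom{d(v)}{s}$. The function $x\mapsto \binom{x}{s}$, interpreted via its polynomial expression $\tfrac{x(x-1)\cdots(x-s+1)}{s!}$, is convex on $[s-1,\infty)$, so Jensen's inequality gives
\[
\sum_{v\in A}\binom{d(v)}{s}\ \geq\ |A|\binom{\overline{d}(A)}{s},
\]
at least when $\overline{d}(A)\geq s-1$; in the opposite regime $\overline{d}(A)<s-1$ the right-hand side of the target inequality is non-positive, so the conclusion is immediate.

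Counting next by the choice of $S$: for each $s$-element set $S\subseteq B$, the number of vertices $v\in A$ with $S\subseteq N(v)$ is at most $t-1$, for otherwise $t$ such vertices together with $S$ would realise a $K_{s,t}$ with $t$ vertices in $A$ and $s$ vertices in $B$, contradicting the hypothesis. Summing this bound over all $s$-subsets of $B$ gives $\sum_{v\in A}\binom{d(v)}{s} \leq (t-1)\binom{|B|}{s} \leq t\binom{|B|}{s}$. Chaining the two counts delivers exactly the inequality in the statement.

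I do not foresee any substantive obstacle; the only mild piece of bookkeeping is making the convexity/Jensen step uniformly valid even when some vertices of $A$ have degree smaller than $s$, which is handled by the polynomial convention for $\binom{x}{s}$ noted above.
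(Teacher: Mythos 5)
The paper does not reprove this lemma; it simply cites it to [KST]. Your proof is precisely the classical K\H{o}v\'ari--S\'os--Tur\'an double count: enumerate pairs $(v,S)$ with $v\in A$ and $S\in\binom{N(v)}{s}$, once by $v$ to get $\sum_{v\in A}\binom{d(v)}{s}$, once by $S\in\binom{B}{s}$ and bound each fibre by $t-1$ via $K_{s,t}$-freeness, then lower-bound the first sum by Jensen. That structure is correct and is the standard argument.

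The one inaccuracy is in how you dispose of the case $\overline{d}(A)<s-1$. The falling-factorial polynomial $\binom{x}{s}=\frac{x(x-1)\cdots(x-s+1)}{s!}$ is \emph{not} convex on all of $[0,\infty)$, and it is not uniformly non-positive on $[0,s-1]$: its sign alternates between consecutive integers (for instance with $s=4$ one has $\binom{1.5}{4}>0$). So neither "the left-hand side is non-positive" nor a blanket appeal to Jensen holds under the raw polynomial convention, and in fact one can construct degree sequences with $\overline d(A)<s-1$ for which $\frac{1}{|A|}\sum_v\binom{d(v)}{s}<\binom{\overline d(A)}{s}$ in that convention. The clean fix is to interpret $\binom{x}{s}$ for real $x$ as the convex truncation $g(x)$ that equals the polynomial for $x\geq s-1$ and is $0$ for $0\leq x<s-1$: $g$ is convex on $[0,\infty)$ and agrees with $\binom{n}{s}$ at every non-negative integer $n$, so Jensen gives $\sum_{v\in A}\binom{d(v)}{s}\geq |A|\,g(\overline d(A))$ with no case split, and chaining with the $S$-side bound $\sum_{v\in A}\binom{d(v)}{s}\leq(t-1)\binom{|B|}{s}\leq t\binom{|B|}{s}$ finishes the proof. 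This is also the convention under which the statement of the lemma should be read. (In the paper's only application, $s=2$ and $\overline d(A)$ is large, so the subtlety never bites there.)
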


Thus we can get the following corollary.

\begin{cor}\label{cor: c4-free robust degree}
    Let $s>0$ and $x>0$. There exists $K$ such that the following holds for each $n$ and $d$ satisfying $n\ge Kd^2$ and $d\ge \log^{s}n$. If $G$ is a $C_4$-free $n$-vertex graph with $\delta(G)\ge d$, then for any vertex set $W\subseteq V(G)$ of size at most $d^2m^x$, we have $d(G-W)\ge \frac{d}{2}$.
\end{cor}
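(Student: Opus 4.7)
The plan is to mimic the proof of Lemma \ref{lem: robust degree}, but replace the dependent random choice argument there with a direct application of the K\H{o}v\'ari--S\'os--Tur\'an bound (Lemma \ref{lem:kst}), which is available because $C_4$-freeness is precisely the exclusion of $K_{2,2}$. The hypothesis $d\ge\log^{s}n$ is not actually needed; it is retained only to match the setup conventions of the paper.

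First, I would dispose of the trivial range: if $|W|\le d/2$, then every vertex of $G-W$ keeps at least $d/2$ neighbours, so $d(G-W)\ge d/2$ already. So assume $|W|>d/2$ and suppose for contradiction that $d(G-W)<d/2$. Comparing $\sum_{v\in V(G-W)}d_G(v)\ge d\,|V(G-W)|$ with $2e(G-W)<d\,|V(G-W)|$ yields
\[
e\bigl(V(G-W),\,W\bigr)\ge \tfrac{d}{2}\,|V(G-W)|.
\]
Thus in the bipartite graph of crossing edges, with parts $A=V(G-W)$ and $B=W$, the average degree on the $A$-side is at least $d/2$.

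Next, since $G$ is $C_4$-free, this bipartite graph contains no $K_{2,2}$, so Lemma \ref{lem:kst} with $s=t=2$ gives $|A|\binom{\overline{d}(A)}{2}\le 2\binom{|B|}{2}$. For $d$ large this simplifies to $|A|\,d^{2}\le 16\,|B|^{2}\le 16\,d^{4}m^{2x}$. Provided $K$ is large enough that $|W|\le d^{2}m^{x}\le n/2$, we have $|A|\ge n/2$, and hence $n/d^{2}\le 32\,m^{2x}$.

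Finally, set $y=n/d^{2}\ge K$. Recall that $m$ is the smallest even integer exceeding $80\log^{4}(n/d^{2})$, so $m\le 100\log^{4}y$ once $y$ is large. The previous bound becomes
\[
y\le C(x)\log^{8x} y
\]
for some constant $C(x)$, which fails whenever $y$ exceeds a threshold depending only on $x$. Choosing $K$ above this threshold yields the contradiction. The only real obstacle is the bookkeeping needed to guarantee $|W|\le n/2$ (so that $|V(G-W)|\ge n/2$), which is automatic once $K$ is sufficiently large because $d^{2}m^{x}$ grows only as $d^{2}\log^{4x}y$ while $n=d^{2}y$ grows linearly in $y$.
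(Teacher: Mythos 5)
Your proof is correct and uses essentially the same key step as the paper: applying the K\H{o}v\'ari--S\'os--Tur\'an bound (Lemma \ref{lem:kst}) with $s=t=2$ to the bipartite graph of edges between $V(G-W)$ and $W$, then using $|W|\le d^2m^x$, $|G-W|\ge n/2$ and $m=\Theta(\log^4(n/d^2))$ to conclude once $n/d^2\ge K$ is large. The only cosmetic difference is that you argue by contradiction (assume $d(G-W)<d/2$ and derive too many crossing edges) whereas the paper argues directly (KST bounds the average number of neighbours in $W$ by $d/2$, hence $d(G-W)\ge\delta(G)-d/2\ge d/2$); your remark that $d\ge\log^s n$ is not used here is also accurate.
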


\begin{proof}
	Let $H=(V(G-W),W,E)$ be a bipartite subgraph of $G$, and $E$ be the set of all edges between $V(G-W)$ and $W$ in $G$. By Lemma \ref{lem:kst} with $(G,A,B,s,t)=(H,V(G-W),W,2,2)$, we have
	\begin{equation*}
		\left| G-W\right| \binom{\overline{d}(V(G)-W)}{2}\leq 2\binom{\left| W\right| }{2}.
	\end{equation*}
	Therefore
	\begin{equation*}
		\overline{d}(V(G)-W)\leq \frac{\sqrt{2}\left| W\right| }{\sqrt{\left| G-W\right| }}+1\leq \frac{2d^2m^x}{\sqrt{n}}\leq \frac{d}{2},
	\end{equation*}
	where the third inequality follows from \eqref{mm} and the choice of $K$. Thus we have $d(G-W)\ge \frac{d}{2}$.
\end{proof}

In order to construct units, we will find many vertex-disjoint hubs, and we use the following lemma to show the existence of these hubs in a $\mathsf{T}K_{\sqrt{d}}^{(2)}$-free graph.

\begin{lemma}\label{lem: robust hub}
    Let $s\ge 8x>0$. There exists $K$ such that the following holds for each $n\ge Kd$, $d\ge \log^{s}n$ and any $h_1,h_2\le \sqrt{d}/100$. If $G$ is a $\mathsf{T}K_{\sqrt{d}}^{(2)}$-free n-vertex graph with $\delta(G)\ge d$, then for any set $W\subseteq V(G)$ with size at most $dm^{x}$, we have that $G-W$ contains an $(h_1,h_2)$-hub.
\end{lemma}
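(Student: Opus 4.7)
The plan is to reduce the problem to constructing a hub inside a large-minimum-degree subgraph of $G-W$, after which the hub can be assembled greedily. First, I apply Lemma \ref{lem: robust degree} with the same parameters $s,x$; its hypotheses are assumed verbatim here, so it yields $d(G-W)\ge d/2$. Next, I invoke the classical observation that every graph $H$ contains a subgraph $H'$ with $\delta(H')\ge d(H)/2$, obtained by iteratively deleting vertices of degree less than $d(H)/2$ (the process cannot empty the graph, since otherwise the total degree removed would fall short of $2e(H)$). Applied to $G-W$, this produces a subgraph $G''\subseteq G-W$ with $\delta(G'')\ge d/4$.

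I then build the $(h_1,h_2)$-hub greedily inside $G''$. Pick any $u\in V(G'')$; since $d_{G''}(u)\ge d/4\ge h_1$ for $d$ large, choose any $h_1$ neighbours $z_1,\dots,z_{h_1}$ of $u$ as $S_1(u)$. Processing $z_1,\dots,z_{h_1}$ in turn, at step $i$ I select $h_2$ neighbours of $z_i$ in $G''$ that avoid the already-used set $\{u\}\cup S_1(u)\cup\bigcup_{j<i}S_1(z_j)$. This forbidden set has size at most
\[
1+h_1+(i-1)h_2\;\le\;1+\tfrac{\sqrt{d}}{100}+\tfrac{d}{10000},
\]
whereas $d_{G''}(z_i)\ge d/4$ and only $h_2\le \sqrt{d}/100$ vertices are needed; so the selection of $S_1(z_i)$ succeeds with ample slack, yielding the required hub in $G''\subseteq G-W$.

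The only substantive input is Lemma \ref{lem: robust degree}, which controls the loss of average degree upon deleting a moderately sized set $W$ using the $TK_{\sqrt{d}}^{(2)}$-free hypothesis. Once that is in hand, passing to a subgraph of minimum degree $d/4$ and running the greedy hub construction is routine, and the generous gap between the target sizes $h_1,h_2\le \sqrt{d}/100$ and the available minimum degree $d/4$ means the greedy step never gets stuck. Accordingly, I do not anticipate a real obstacle beyond citing the already-proved Lemma \ref{lem: robust degree}.
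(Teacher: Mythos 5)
Your proposal matches the paper's proof almost verbatim: both invoke Lemma \ref{lem: robust degree} to preserve average degree $d/2$ after deleting $W$, pass to a subgraph of minimum degree $\ge d/4$, and then greedily assemble the hub, noting that its total size $1+h_1+h_1h_2 \le 2h_1h_2 \le d/4$ is comfortably below the available minimum degree. The argument is correct and takes essentially the same approach as the paper.
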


\begin{proof}
    Let $K$ be sufficiently large. By Lemma \ref{lem: robust degree}, we have $d(G-W)\ge d/2$, so there exists a subgraph $H\subseteq G-W$ with $\delta (H)\ge d/4$. We choose an arbitrary vertex $v$ in $H$. As the size of the vertex set of $(h_1,h_2)$-hub is $1+h_1+h_1\cdot h_2\le 2h_1\cdot h_2\le d/4$, we can greedily find a hub with center vertex $v$ as desired.
\end{proof}

We can also robustly find hubs in a $C_4$-free graph.

\begin{lemma}\label{lem: c4-free robust hub}
	Let $s>0$ and $x>0$. There exists $K$ such that the following holds for each $n\ge Kd^2$, $d\ge \log^{s}n$ and any $h_1,h_2\le d/100$. If $G$ is a $C_4$-free n-vertex graph with $\delta(G)\ge d$, then for any set $W\subseteq V(G)$ with size at most $d^2m^{x}$, we have that $G-W$ contains an $(h_1,h_2)$-hub.
\end{lemma}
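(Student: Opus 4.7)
\medskip

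\noindent\textbf{Proof proposal.} The strategy mirrors that of Lemma \ref{lem: robust hub}, but the greedy bound used there fails in our regime: here $h_1,h_2\le d/100$ allows $h_1 h_2\le d^2/10^4$, which is much larger than the available minimum degree, so we cannot simply greedily assemble the whole hub inside a single vertex of high degree. The fix is to exploit $C_4$-freeness, which forces any two distinct vertices to share at most one common neighbour.

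The first step is to pass to a subgraph of high minimum degree. Choosing $K$ sufficiently large and applying Corollary \ref{cor: c4-free robust degree}, we obtain $d(G-W)\ge d/2$, so $G-W$ contains a subgraph $H$ with $\delta(H)\ge d/4$. Pick an arbitrary vertex $u\in V(H)$ and set it as the centre of the prospective hub. Since $\delta(H)\ge d/4\ge h_1$, we may pick any $h_1$ neighbours of $u$ in $H$ to form $S_1(u)$, and enumerate them as $z_1,\dots,z_{h_1}$.

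Now I would build the second-layer sets $S_1(z_i)\subseteq N_H(z_i)\setminus\{u\}$ of size $h_2$ one at a time, insisting at each step that $S_1(z_i)$ be disjoint from all previously chosen sets. The key count is the number of vertices of $N_H(z_i)$ that are already forbidden: the vertex $u$ contributes one; the set $S_1(u)\setminus\{z_i\}\subseteq N(u)$ contributes at most $|N(u)\cap N(z_i)|\le 1$ by $C_4$-freeness; and each previously built set $S_1(z_j)\subseteq N(z_j)$ with $j<i$ contributes at most $|N(z_j)\cap N(z_i)|\le 1$, hence at most $h_1-1$ in total. Thus at most $h_1+1\le d/100+1$ vertices of $N_H(z_i)$ are blocked, and since $|N_H(z_i)|\ge d/4$ we have room to select $h_2\le d/100$ fresh neighbours, completing the hub.

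I do not expect a genuine obstacle: the application of Corollary \ref{cor: c4-free robust degree} is immediate, and the only structural input needed beyond minimum degree is the $C_4$-free pairwise-common-neighbour bound, which is exactly what the hypothesis supplies. The one point to check carefully is just that $h_1+h_2+1\le d/4$, which follows from $h_1,h_2\le d/100$ for large $d$.
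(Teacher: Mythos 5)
Your proposal is correct and follows the same route as the paper: apply Corollary~\ref{cor: c4-free robust degree} to restore average degree after deleting $W$, pass to a subgraph $H$ with $\delta(H)\ge d/4$, and build the hub greedily around an arbitrary vertex, using $C_4$-freeness to control overlaps. The paper states the key structural fact a bit more sharply — for any $x,y\in N_H(v)$ one has $N_H(x)\cap N_H(y)=\{v\}$, so the previously built sets $S_1(z_j)\subseteq N_H(z_j)\setminus\{u\}$ are in fact automatically disjoint from $N_H(z_i)\setminus\{u\}$ and block zero vertices rather than one each — but your slightly more conservative count of at most $h_1+1$ blocked vertices still clears $\delta(H)\ge d/4$ comfortably, so the argument goes through.
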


\begin{proof}
	Let $K$ be sufficiently large. By Corollary \ref{cor: c4-free robust degree}, we have $d(G-W)\ge d/2$, so there exists a subgraph $H\subseteq G-W$ with $\delta (H)\ge d/4$. We arbitrarily pick a vertex $v$ in $H$. As $H$ is $C_4$-free, for any vertices $x,y\in N_H(v)$, we have $N_H(x)\cap N_H(y)=\{ v \} $. Therefore we can in a greedy way find a hub with center vertex $v$ as desired.
\end{proof}

We now expand hubs to get a unit.

\begin{proof}[Proof of Lemma~\ref{lem: robust unit}.]
	The proof ideas of (\romannumeral1) and (\romannumeral2) are similar, and we choose $\kappa$ as in \eqref{kappa}.
    We choose $K$ to be sufficiently large. Recall that $s\ge 240$, $c=1/200$ and $m$ is the smallest even integer which is larger than $80\log^{4}\frac{n}{\kappa^2}$, so $\kappa^2\ge \log^{48}n\ge m^{12}$. Since $n/\kappa^2\ge K$, we have $m\ge 80\log^4K$. For sufficiently large $K$, we have
    	\begin{equation}c\kappa\ge cm^6\ge 8m^5.\label{unit1}\end{equation}

    We claim that we can find in $G-W$ vertex-disjoint hubs $H(w_1),\cdots,H(w_{m^{6}})$ and $H(u_1),\cdots,\\ H(u_{\kappa m^{6}})$ such that each $H(w_i), 1\le i\le m^6$, is a $(2c\kappa,2c\kappa)$-hub and each $H(u_j), 1\le j\le \kappa m^6$, is a $(2m^4,2c\kappa)$-hub. Note that the total number of vertices in $W$ and all these hubs we desired is at most

    $$2c\kappa^2m^4+2(2c\kappa)^{2} m^6+2(2c\kappa)(2m^4)\kappa m^6\le 10c\kappa^2m^{10}.$$
    Note that we can find a copy of $(2m^4,2c\kappa)$-hub in a $(2c\kappa,2c\kappa)$-hub. It suffices to show that we can find a $(2c\kappa,2c\kappa)$-hub in $G$ avoiding any vertex set of size at most $10c\kappa^2m^{10}$. Indeed, this is possible by applying Lemma \ref{lem: robust hub} for (\romannumeral1) or Lemma \ref{lem: c4-free robust hub} for (\romannumeral2) with $x=10$.

    Recall that for a hub with a center vertex $v$, $S_1(v)$ is the vertex set of the neighbours of $v$ in the hub, and $B_1(v)=\{v\}\cup S_1(v)$. We will construct a unit using some vertex $w_i$ as the core vertex. Let $\mathcal{P}$ be a maximum collection of paths connecting different pairs of center vertices $\{w_i,u_j\}$ under the following rules.

    \begin{itemize}
	\item[$\mathbf{C1}$] Each path has length at most $2m$, and all the paths are pairwise internally vertex-disjoint. Moreover all the vertices of those paths are disjoint from $W$.
	\item[$\mathbf{C2}$] Each path avoids using any vertices in set $B_1(w_{i'})$ or $B_1(u_{j'})$ for $1\le i'\le m^6$ and $1\le j'\le \kappa m^6$, except for at most two vertices each in $B_1(w_i)$ and $B_1(u_j)$ when $\{w_i,u_j\}$ is the pair of vertices being connected.
    \end{itemize}
    \begin{claim}\label{claim: find unit}
    There exists a vertex $w_i$ connected to at least $c\kappa$ vertices $u_j$ via the paths in $\mathcal{P}$.
    \end{claim}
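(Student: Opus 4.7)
The plan is to argue by contradiction: suppose every $w_{i}$ is joined in $\mathcal{P}$ to fewer than $c\kappa$ of the $u_{j}$'s, so $|\mathcal{P}|<c\kappa m^{6}$. Fix any such $w_{i}$ and any $u_{j}$ not yet joined to it; the goal is to construct a new $w_{i}, u_{j}$-path satisfying $\mathbf{C1}$--$\mathbf{C2}$, contradicting the maximality of $\mathcal{P}$.

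First I would bound the vertices a new path must avoid. Let
\[
W^{*}:=W\cup\bigcup_{i'\neq i}B_{1}(w_{i'})\cup\bigcup_{j'\neq j}B_{1}(u_{j'})\cup V(\mathcal{P}).
\]
Using $|W|\le 2c\kappa^{2}m^{4}$, $\sum_{i'}|B_{1}(w_{i'})|+\sum_{j'}|B_{1}(u_{j'})|\le 4\kappa m^{10}$, and $|V(\mathcal{P})|\le(2m+1)|\mathcal{P}|\le 4c\kappa m^{7}$, together with $\kappa\ge m^{30}$ (from $d\ge\log^{s}n\ge m^{60}$), the $|W|$-term dominates and $|W^{*}|\le 3c\kappa^{2}m^{4}$. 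I would then extend from $w_{i}$ through its hub: since $<c\kappa$ paths in $\mathcal{P}$ pass through $w_{i}$ and each uses at most $2$ vertices of $B_{1}(w_{i})$, there are at least $c\kappa$ vertices in $S_{1}(w_{i})\setminus V(\mathcal{P})$; since the hubs are pairwise vertex-disjoint and disjoint from $W$, the set $A_{w}:=S_{2}(w_{i})\setminus W^{*}$ has size at least $4c^{2}\kappa^{2}-4c\kappa m^{7}\ge 3c^{2}\kappa^{2}$, reachable from $w_{i}$ by a path of length $2$ lying in $B_{1}(w_{i})\cup A_{w}$. The analogous construction near $u_{j}$ through $H(u_{j})$ yields a set $A_{u}$.

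Next I would grow $A_{w}$ and $A_{u}$ by BFS in $G-W^{*}$ to sets $X_{w}, X_{u}$ of size at least $x:=\ep_{2}\kappa^{2}m^{10}$. Once a set exceeds the threshold $k/2=\ep_{2}\kappa^{2}/2$, Definition~\ref{def-sublinear-expander} yields a $(1+\Omega(1/\log^{2}m))$-factor growth per step, reaching $x$ in $O(\log m)\le m/4$ rounds, with the $W^{*}$-losses absorbed by the expansion term once the set is well above $k/2$. Since $x\ep(x)/4\ge|W^{*}|$ (using \eqref{unit5} and $\kappa\ge m^{30}$), Lemma~\ref{lem: small-diameter-lemma} then supplies a path in $G-W^{*}$ between $X_{w}$ and $X_{u}$ of length at most $\tfrac{2}{\ep_{1}}\log^{3}(15n/\ep_{2}\kappa^{2})\le m/2$. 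Concatenating with the two hub-extensions produces a $w_{i}, u_{j}$-path of length at most $2m$ that avoids $W^{*}$ and meets $B_{1}(w_{i})$ only at $\{w_{i}, z\}$ for some $z\in S_{1}(w_{i})$, and $B_{1}(u_{j})$ analogously, thus satisfying $\mathbf{C1}$--$\mathbf{C2}$ and contradicting the maximality of $\mathcal{P}$.

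I expect the main hurdle to be bootstrapping the BFS past $k/2$: in case (i) with $\ep_{2}$ close to $1/5$, the initial set $A_{w}$ of size $3c^{2}\kappa^{2}$ may lie below $\ep_{2}\kappa^{2}/2$, so Definition~\ref{def-sublinear-expander} does not apply out of the box. I would use Lemma~\ref{lem: robust degree} to guarantee $d(G-W^{*})\ge d/2=\kappa^{2}/2$, together with $\delta(G)\ge\kappa^{2}\ge k$, to obtain a few unconditional rounds of BFS growth that push the set past the threshold, after which the sublinear expansion takes over. In case (ii), $C_{4}$-freeness immediately gives $|N(S_{1}(w_{i}))|\ge c\kappa^{2}\ge k$ via pairwise almost-disjoint neighbourhoods, so this bootstrapping is immediate.
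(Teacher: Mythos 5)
Your proposal diverges from the paper in a way that opens a real gap. The paper never does BFS: it takes the \emph{union} over all $m^{6}$ hubs $H(w_{i})$ of the sets $N_{H(w_{i})}(T_{i})$ (where $T_{i}=B_{1}(w_{i})\setminus W_{1}$), which already has size at least $c^{2}\kappa^{2}m^{6}$, and similarly it uses the union $\bigcup_{j=1}^{p}V(H(u_{j}))$ over all the $\kappa m^{6}/2$ unconnected $u_{j}$'s, of size at least $c^{2}\kappa^{2}m^{6}$. These two sets are already large enough for a single application of Lemma~\ref{lem: small-diameter-lemma}, whose conclusion then identifies \emph{which} pair $(w_{i},u_{j})$ to connect. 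You instead fix one $w_{i}$ and one $u_{j}$ in advance and throw every other hub $B_{1}(w_{i'})$, $B_{1}(u_{j'})$ into the avoid-set $W^{*}$ --- thereby deliberately discarding exactly the structure the paper exploits --- and then try to compensate by BFS-expanding the much smaller set $A_{w}=S_{2}(w_{i})\setminus W^{*}$ of size $\approx 3c^{2}\kappa^{2}$.

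That BFS step does not go through. Two independent problems arise. First, in case (i) with $c=1/200$ and $\ep_{2}$ a constant, $|A_{w}|\approx 3\kappa^{2}/40000$ is below $k/2=\ep_{2}\kappa^{2}/2$, and the sublinear-expander property gives nothing for sets of size below $k/5$. Your proposed fix --- invoking $d(G-W^{*})\ge d/2$ or $\delta(G)\ge\kappa^{2}$ to get ``a few unconditional rounds of BFS growth'' --- is unsound: average degree says nothing about neighbourhood growth of a fixed small set, and a minimum-degree bound does not prevent the neighbourhoods of the vertices of $A_{w}$ from all being contained in a common $d$-set (think of $A_{w}$ sitting on one side of a $K_{d,d}$) or absorbed by $W^{*}$. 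Second, and more fatally, even once a BFS set $X$ crosses $k/2$, the expansion property only guarantees $|N(X)|\ge\ep(|X|)|X|$, and to make progress in $G-W^{*}$ you need $\ep(|X|)|X|/4\gtrsim|W^{*}|$. At $|X|\approx k/2$ this gives roughly $O(\ep_{2}\kappa^{2})$ new vertices per round, while $|W^{*}|\approx c\kappa^{2}m^{4}$ is larger by a factor $\Theta(m^{4}/\ep_{2})$; by \eqref{unit5} you need $|X|\gtrsim\kappa^{2}m^{5}$ before the losses to $W^{*}$ can be absorbed, so there is a whole range of sizes in which the set cannot grow. Your sentence ``with the $W^{*}$-losses absorbed by the expansion term once the set is well above $k/2$'' is precisely the step that fails, and there is no room to repair it by tuning constants since $|W^{*}|$ genuinely contains the $\Theta(\kappa^{2}m^{4})$-sized interiors from $W$. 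The remedy is the paper's: keep the other hubs \emph{out} of the avoid-set and use their aggregate size to feed Lemma~\ref{lem: small-diameter-lemma} directly, with the pair $(w_{i},u_{j})$ chosen by the lemma's output rather than fixed in advance.
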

    \begin{proof}
    Suppose to the contrary that each vertex $w_i$ is connected to fewer than $c\kappa$ vertices $u_j$ by the paths in $\mathcal{P}$. Let $U$ be the set of interior vertices in all the paths in $\mathcal{P}$. Then by $\mathbf{C1}$,
    \begin{equation}\abs{U}\le 2m\cdot m^6\cdot c\kappa=2c\kappa m^7.\label{unit2}\end{equation}
    Let $W_1$ be the vertex set containing the vertices in $U$ and all the vertices in each set $B_1(u_{j})$ if $u_{j}$ has been connected to at least one of the vertices $w_i$. As there are at most $m^6\cdot c\kappa$ such vertices $u_j$, using \eqref{unit1} we have
    \begin{equation}\abs{W_1}\le \abs{U}+m^6c\kappa\cdot(2m^4+1)\overset{\eqref{unit2}}{\le} 2c\kappa m^7+4c\kappa m^{10} \overset{\eqref{unit1}}{\le} c^2\kappa^2m^5.\label{unit3}\end{equation}

    For each $1\le i\le m^6$, let $T_i=B_1(w_i)\backslash W_1$. Then by $\mathbf{C2}$ and the assumption, we have $\abs{T_i}\ge c\kappa$. As the graphs $H(w_i)$ are vertex-disjoint $(2c\kappa,2c\kappa)$-hubs, we have $\abs{\cup_{i=1}^{m^6} N_{H(w_i)}(T_i)}\ge 2c\kappa\cdot c\kappa\cdot m^6$, and hence we have
    $$\abs{\cup_{i=1}^{m^6} N_{H(w_i)}(T_i)\backslash W_1}\ge 2c\kappa\cdot c\kappa\cdot m^6-\abs{W_1}\overset{\eqref{unit3}}{\ge} c^2\kappa^2m^6.$$

    At least $\kappa m^6-m^6\cdot c\kappa\ge \kappa m^6/2$ vertices $u_j$ have not been connected by a path in $\mathcal{P}$. Without loss of generality, we write these vertices $u_1,\cdots,u_p$, where $p\ge \kappa m^6/2$. By $\mathbf{C2}$, the set $W_1$ is disjoint from $\cup_{j=1}^{p}B_1(u_j)$, we have
    $$\abs{\cup_{j=1}^{p}H(u_j)-W_1}\ge 2m^4\cdot 2c\kappa\cdot \frac{\kappa m^6}{2}-c^2\kappa^2m^5\ge c^2\kappa^2m^6.$$

    We will apply Lemma \ref{lem: small-diameter-lemma} to connect $\cup_{i=1}^{m^6} N_{H(w_i)}(T_i)\backslash W_1$ and $\cup_{j=1}^pV(H(u_j))\backslash W_1$, while avoiding the vertices in $\cup_{i=1}^{m^6} T_i$, $W$ and $W_1$. Since $d\ge\log^sn\ge m^{60}$ and $n/\kappa^2\ge K$ is sufficiently large, we have
    \begin{equation}\abs{\cup_{i=1}^{m^6} T_i}+\abs{W}+\abs{W_1}\le (2c\kappa+1)m^6+2c\kappa^2m^4+c^2\kappa^2m^5 \le 2c^2\kappa^2m^5.\label{unit4}\end{equation}
    Hence, setting $y:=c^2\kappa^2m^6$, we have
    $$\frac{1}{4}\cdot \ep(y)\cdot y\ge \frac{1}{4}\cdot \ep(n)\cdot y \overset{\eqref{unit5}}{\ge} \frac{1}{4}\cdot \frac{10^5}{m}\cdot y\ge 2c^2\kappa^2m^5.$$
    Thus, by \eqref{unit4} and Lemma \ref{lem: small-diameter-lemma}, there is a shortest path of length at most
    $$\frac{2}{\ep_1}\log^3\frac{15n}{\ep_2\kappa^2}+1\le \log^4\frac{n}{\kappa^2}\le m$$
    from some $T_i$ to some $V(H(u_j))$ avoiding $W$ and $W_1$. Taking such a path, we extend it to a $w_i,u_j$-path of length at most $1+m+2\le 2m$ in $G-(W\cup W_1)$, which together with all the paths in $\mathcal{P}$ satisfies $\mathbf{C1}$ and $\mathbf{C2}$, a contradiction.
    \end{proof}

    By Claim \ref{claim: find unit}, we have a vertex $w_i$,  which is connected to $c\kappa$ vertices $u_j$. Without loss of generality, we may take $u_1,u_2,\cdots,u_{c\kappa}$ for instance so as to ease the notation. If we can find in every $(2m^4,2c\kappa)$-hub $H(u_j)$ an $(m^4,c\kappa)$-hub which is disjoint from all the internal vertices of the $w_i,u_j$-paths in $\mathcal{P}$, $1\le j\le c\kappa$, then all such $(m^4,c\kappa)$-hubs together with all the $w_i,u_j$-paths form a $(c\kappa,m^4,c\kappa,2m)$-unit. It suffices to find for every $j\in [c\kappa]$ a $(m^4,c\kappa)$-hub centered at $u_j$ as required above.

    By $\mathbf{C2}$, for every $j\in [c\kappa]$, at most one vertex in $S_1(u_j)$ is used in the $w_i,u_j$-paths. Denote by $W_2$ the vertices in all the $w_i,u_j$-paths, $1\le j\le c\kappa$. By $\mathbf{C1}$, we have that $\abs{W_2}\le 2c\kappa m$. Therefore, for every $u_j$, there are at most $2m$ vertices $v\in S_1(u_j)$ such that $\abs{S_1(v)\cap W_2}\ge c\kappa$ vertices from $S_1(v)$ in $W_2$. Then we can take a set of $m^4$ vertices $v$ in $S_1(u_j)$ along with $c\kappa$ vertices in every $S_1(v)$ avoiding $W_2$, forming the desired $(m^4,c\kappa)$-hub.
\end{proof}

\subsection{Constructing an adjuster}\label{subsec: robust adjuster}

In this section, we start by finding a simple adjuster in an expander despite the removal of any medium-sized vertex set by Lemma \ref{lem: robust simple adjuster}. Then, we link simple adjusters together to obtain a desired adjuster. First we state Lemma \ref{lem: robust simple adjuster}, a key ingredient of our proof. It finds a simple adjuster robustly in an expander $G$, that is, given any subset $W\subseteq V(G)$ with a moderate size, we can construct an adjuster in $G-W$.

\begin{lemma}\label{lem: robust simple adjuster}
    There exists some $\ep_1>0$ such that, for every $0<\ep_2<1$ and integer $s\ge 240$, there exists $d_0$ and $K$ such that the following holds for each $n\ge d\ge d_0$ and $d\ge \log^sn$. \\
    {\rm(\romannumeral1)} If $G$ is a $\mathsf{T}K_{\sqrt{d}}^{(2)}$-free n-vertex $(\ep_{1},\ep_{2}d)$-expander with $\delta(G)\ge d$ and $n\ge Kd$, then for any vertex set $W\subseteq G$ satisfing $\abs{W}\le 10D$ with $D=dm^4/10^7$, we have that $G-W$ contains a $(D,m/4,1)$-adjuster. \\
    {\rm(\romannumeral2)}  If $G$ is a $C_4$-free n-vertex $(\ep_{1},\ep_{2}d^2)$-expander with $\delta(G)\ge d$ and $n\ge Kd^2$, then for any vertex set $W\subseteq G$ satisfing $\abs{W}\le 10D$ with $D=d^2m^4/10^7$, we have that $G-W$ contains a $(D,m/4,1)$-adjuster.
\end{lemma}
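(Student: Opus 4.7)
The plan is to prove case (i) in detail; case (ii) follows by the same argument with $d$ replaced by $d^{2}$, $TK_{\sqrt{d}}^{(2)}$-freeness by $C_{4}$-freeness, and an extra initial BFS step using $C_{4}$-freeness to reach the expansion threshold. The construction proceeds in three stages: growing ball expansions robustly, exhibiting a short \emph{detour gadget} whose two paths have lengths differing by exactly $2$, and assembling the pieces into an adjuster.

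First I would establish a \emph{robust expansion} sub-claim: for any set $U \subseteq V(G)$ with $|U| \le 11D$ and any vertex $u \in V(G) - U$ with $|N_{G}(u) \setminus U| \ge \varepsilon_{2}\kappa^{2}/5$, the graph $G - U$ contains a $(D, m/4)$-expansion of $u$. Set $X_{0} = \{u\}$ and $X_{i+1} = X_{i} \cup (N_{G}(X_{i}) \setminus U)$. The hypothesis seeds $|X_{1}|$ past the threshold $\varepsilon_{2}\kappa^{2}/5$, after which the sublinear expander property together with $\varepsilon(x) \ge 8/m$ (cf.~\eqref{unit5}) gives geometric growth $|X_{i+1}| \ge (1 + \varepsilon(|X_{i}|)/2)|X_{i}|$ even after absorbing the loss of $|U|$ at each step. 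Within $m/4$ iterations, $|X_{m/4}| \ge D$; Proposition~\ref{prop: expansion} then trims to an exact $(D, m/4)$-expansion of $u$.

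Next I would produce the detour gadget: two vertices $v_{1}, v_{2}$ in $V(G) - W$ together with two internally disjoint $v_{1},v_{2}$-paths of lengths $l$ and $l+2$, contained in $A \cup \{v_{1}, v_{2}\}$ for some $A$ of size at most $5m/2$. The structural idea is that any even cycle $C$ of length $2j$ with $j \ge 3$ supplies such a gadget: take $v_{1}, v_{2}$ on $C$ at cyclic distance $j - 1$, so the two arcs between them have lengths $j - 1$ and $j + 1$, and set $A := V(C) \setminus \{v_{1}, v_{2}\}$, giving $|A| = 2j - 2$. To produce such a $C$ inside $G - W$, grow a $(D, m/4)$-expansion $F$ of a suitable vertex in $G - W$ via Stage~1; a standard Moore-bound/edge-counting argument inside $F$ (using $\delta(G) \ge d$) yields a cycle of length $O(\log_{d} n) \ll m/4$. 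In case (ii), $C_{4}$-freeness automatically forces $j \ge 3$; in case (i), any located $4$-cycle $x_{1}x_{2}x_{3}x_{4}$ would be ``promoted'' to a cycle of length $\ge 6$ by choosing a free neighbour $y$ of $x_{1}$ and a short detour path from $y$ back to $x_{3}$ avoiding $\{x_{1},x_{2},x_{4}\}$, provided by Lemma~\ref{lem: small-diameter-lemma}.

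Finally I would assemble the adjuster: with $(v_{1}, v_{2}, A)$ fixed, Stage~1 applied twice yields $(D, m/4)$-expansions $F_{1}$ of $v_{1}$ in $G - (W \cup A \cup \{v_{2}\})$ and $F_{2}$ of $v_{2}$ in $G - (W \cup A \cup V(F_{1}) \cup \{v_{1}\})$, the combined forbidden sets having size at most $11D$. One selects the cycle $C$ in Stage~2 so that $v_{1}, v_{2}$ satisfy Stage~1's neighbourhood hypothesis, which holds for a typical vertex by averaging (since $|W| \ll n$). The tuple $(v_{1}, F_{1}, v_{2}, F_{2}, A)$ then fulfils $\mathbf{A1}$--$\mathbf{A4}$ with $k = 1$. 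The main obstacle is Stage~2 in case (i): since $TK_{\sqrt{d}}^{(2)}$-freeness does not rule out $C_{4}$s, one must carefully execute the cycle-promotion step to guarantee an even cycle of length $\ge 6$ in $G - W$, combining expansion with the minimum-degree bound to route around any short $4$-cycles that obstruct the direct argument.
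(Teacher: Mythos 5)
The central difficulty is in Stage~1: your robust-expansion sub-claim fails. After seeding with $|X_1|\approx \ep_2\kappa^2/5$, the one-step gain is bounded by $\ep(|X_1|)|X_1|-|U|$. Here $\ep(|X_1|)|X_1|=O(\kappa^2)$, while $|U|$ may be as large as $11D=\Theta(\kappa^2 m^4)$; the gain is negative by a factor of roughly $m^4$, so the ball does not grow at all. No choice of seed fixes this: every BFS ball grown from a vertex must pass through the size range $[\Theta(\kappa^2),D]$, and throughout this range the expander only yields $\ep(x)x\lesssim x/\log^2(\cdot)<|U|$. Robust expansion against deleting a set of size $\Theta(D)$ requires the current set to already have size $\Omega(Dm)$, which a BFS from a single vertex cannot reach in the teeth of the same deletion.

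This is exactly the obstruction the paper's argument is built to sidestep. Rather than growing one large expansion directly, the proof (Section~\ref{subsec: robust adjuster}) constructs $m^{30}$ pairwise disjoint \emph{small} $(\kappa^2/800,m/400,1)$-adjusters, each found greedily inside a fresh expander $H\subseteq G-W'$ (via Lemma~\ref{lem: robust degree}/Corollary~\ref{cor: c4-free robust degree} together with Corollary~\ref{cor: bipartite expander}, which preserve density under medium deletions), and then \emph{aggregates} their ends into octopuses via Claim~\ref{claim: connect candy}. The crucial feature is that Lemma~\ref{lem: small-diameter-lemma} is only ever invoked between \emph{unions of many ends}, of size $\ge\kappa^2 m^x/800$ with $x\ge5$; for sets of that size the deletable budget $\ep(x)x/4\gtrsim\kappa^2 m^{x-1}$ does dominate the forbidden set. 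The final $(D,m/4)$-expansions are realised as a core end plus $\Theta(m^4)$ peripheral adjuster ends, each reached by a path of length $O(m)$---a structure a pure BFS ball cannot replicate. A lesser gap: your Stage~2 cycle-promotion in case~(\romannumeral1) is both unjustified (Lemma~\ref{lem: small-diameter-lemma} connects vertex sets of size $\ge k$, not individual vertices) and unnecessary, since the paper works in the bipartite subgraph $H$, where a shortest cycle of length $2r$ with $r=2$ already yields arcs of lengths $1$ and $3$ between two vertices at distance $r-1$, and hence a valid simple adjuster.
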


Using Lemma \ref{lem: robust simple adjuster}, we can find many vertex-disjoint simple adjusters. Then we can connect them together into a large adjuster.

\begin{proof}[Proof of Lemma~\ref{lem: robust adjuster}.]
	The proof ideas of (\romannumeral1) and (\romannumeral2) are similar, and we choose $\kappa$ as in \eqref{kappa}. Thus $D=\kappa^2m^4/10^7$. Let $\ep_1>0$, and $K$, $d_0$ be sufficiently large. We prove the lemma by induction on $r$. For $r=1$, as $\abs{W}\le D/\log^3\frac{n}{\kappa^2}$, by Lemma \ref{lem: robust simple adjuster}, $G-W$ contains a $(D,m/4,1)$-adjuster, which is also a $(D,m,1)$-adjuster.

    Suppose then, for some $r$ with $1\le r< \frac{1}{10}\kappa^2m^2$,  $G-W$ contains a $(D,m,r)$-adjuster, denoted by $\mathcal{A}_1=(v_1,F_1,v_2,F_2,A_1)$. Let $W_1=W\cup A_1 \cup V(F_1)\cup V(F_2)$, we have $\abs{W_1}\le 4D$. Then Lemma \ref{lem: robust simple adjuster} shows that $G-W_1$ contains a $(D,m/4,1)$-adjuster $\mathcal{A}_2=(v_3,F_3,v_4,F_4,A_2)$. Note that $\abs{F_1\cup F_2}=\abs{F_3\cup F_4}=2D=: x$, and $\abs{W\cup A_1\cup A_2}\le D/\log^3\frac{n}{\kappa^2}+20rm\le D/\log^3\frac{n}{\kappa^2}+2\kappa^2m^3\le 2D/\log^3\frac{n}{\kappa^2}$. By \eqref{unit5}, we have $1/4\cdot\ep(x) x\ge 1/4\cdot\ep(n) x\ge 2D/\log^3\frac{n}{\kappa^2}\ge \abs{W\cup A_1\cup A_2}$, then by Lemma \ref{lem: small-diameter-lemma}, there is a shortest path $P$ of length at most $m$, from $V(F_1)\cup V(F_2)$ to $V(F_3)\cup V(F_4)$ avoiding $W\cup A_1\cup A_2$.

    We can assume, without loss of generality, that $P$ is a path from $V(F_1)$ to $V(F_3)$. Then by extending $P$, using that $F_1$ and $F_3$ are $(D,m)$-expansions of $v_1$ and $v_3$ respectively, we can get a $v_1,v_3$-path $Q\subseteq F_1\cup P\cup F_3$ of length at most $3m$. Now we claim that $(v_2,F_2,v_4,F_4,A_1\cup A_2\cup V(Q))$ is a $(D,m,r+1)$-adjuster. Indeed, we easily have that $\mathbf{A1}$ and $\mathbf{A2}$ hold, and $\mathbf{A3}$ holds as $\abs{A_1\cup A_2\cup V(Q)}\le 10mr+10\cdot(m/4)+3m\le 10(r+1)m$.
    Finally, let $l=l(\mathcal{A}_1)+l(\mathcal{A}_2)+l(Q)$. For every $i\in \{0,1,\cdots,r+1\}$, there are some $i_1\in \{0,1,\cdots,r\}$ and $i_2\in \{0,1\}$ such that $i=i_1+i_2$. Let $P_1$ be a $v_1,v_2$-path in $G[A_1\cup\{v_1,v_2\}]$ of length $l(\mathcal{A}_1)+2i_1$ and let $P_2$ be a $v_3,v_4$-path in $G[A_2\cup\{v_3,v_4\}]$ of length $l(\mathcal{A}_2)+2i_2$. Thus, $P_1\cup Q\cup P_2$ is a $v_2,v_4$-path in $G[A_1\cup A_2\cup V(Q)]$ of length $l+2i$, and therefore $\mathbf{A4}$ holds.
\end{proof}

Now we start to prove Lemma \ref{lem: robust simple adjuster}. We first introduce the concept of octopus used to find a simple adjuster.

\begin{defn}[Octopus]\label{octopus}
	Given integers $r_1,r_2,r_3,r_4>0$, an \emph{$(r_1,r_2,r_3,r_4)$-octopus} $\mathcal{B}=(A,R,\\ \mathcal{D},\mathcal{P})$ is a graph consisting of a \emph{core} $(r_1,r_2,1)$-adjuster $A$, one of the ends of $A$, called $R$, and
	\begin{itemize}
	\item a family $\mathcal{D}$ of $r_3$ vertex-disjoint $(r_1,r_2,1)$-adjusters, which are disjoint from $A$, and
	\item a minimal family $\mathcal{P}$ of internally vertex-disjoint paths of length at most $r_4$, such that each adjuster in $\mathcal{D}$ has at least one end which is connected to $R$ by a subpath from a path in $\mathcal{P}$, and all the paths are disjoint from all center sets of the adjusters in $\mathcal{D}\cup \{A\}$. It is easy to observe that $\abs{\mathcal{P}}\le \abs{\mathcal{D}}$.
    \end{itemize}
\end{defn}

\begin{figure}[htbp]\label{L2}
	\centering
	\includegraphics[scale=0.36]{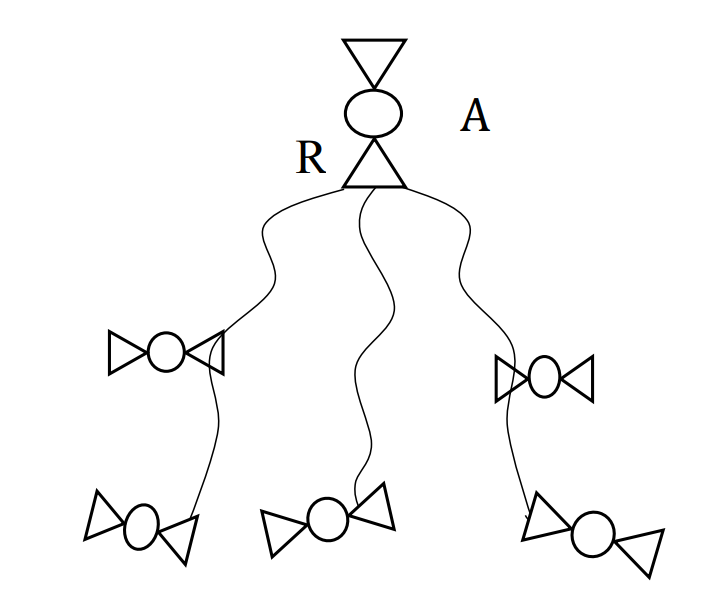}
	\caption{octopus}
\end{figure}

An outline of the proof of Lemma \ref{lem: robust simple adjuster} is as follows. We can find an expander $H\subseteq G-W$, but note that $H$ may be much smaller than $G$, so we may not find a simple adjuster of desired size directly. To overcome this, we first find many vertex-disjoint small simple adjusters. Then we connect them such that by averaging one end of a small simple adjuster would be linked to many others by internally vertex-disjoint paths, forming an octopus (see Claim \ref{claim: connect candy}). This process can be seen as that we expand one end of the small simple adjuster to a desired size. Finally, using the similar idea, we can expand another end to get the desired simple adjuster.

\begin{proof}[Proof of Lemma~\ref{lem: robust simple adjuster}.]
	The proof ideas of (\romannumeral1) and (\romannumeral2) are similar, and we choose $\kappa$ as in \eqref{kappa}. Recall $m$ is the smallest even integer that larger than $80\log^{4}\frac{n}{\kappa^2}$.
    Let $K$ be sufficiently large.  We first prove that there are $m^{30}$ pairwise disjoint $(\kappa^2/800,m/400,1)$-adjusters in $G-W$. Note that such $m^{30}$ adjusters have total size at most $(2\cdot \kappa^2/800+10\cdot m/400)\cdot m^{30}\le \kappa^2m^{30}/20$. It suffices to show that for any set $W'\subseteq V(G)$ with size $\kappa^2m^{30}/10$, there is a $(\kappa^2/800,m/400,1)$-adjuster in $G-W'$.

    For (\romannumeral1), by Lemma \ref{lem: robust degree} with $(W,x)=(W',30)$, we have $d(G-W')\ge d/2$, and by Corollary \ref{cor: bipartite expander} with $G=G-W'$, there exists a bipartite $(\ep_1,\ep_2d)$-expander $H\subseteq G-W'$ with $\delta (H)\ge d/16$. Then by Lemma \ref{lem: small-diameter-lemma}, there exists a shortest cycle $C$ in $H$ of length at most $m/16$ and whose length is denoted by $2r$. Thereafter, we arbitrarily pick two vertices $v_1,v_2\in V(C)$ of distance $r-1$ apart on $C$, together with $d/800$ distinct vertices in $N_{H-C}(v_1),N_{H-C}(v_2)$ respectively, then we get a $(d/800,m/400,1)$-adjuster as desired. Note that every vertex in the ends of the adjuster has no neighbour in the same end, except for $v_1$ and $v_2$.

    For (\romannumeral2), by Corollary \ref{cor: c4-free robust degree} with $(W,x)=(W',30)$, we have $d(G-W')\ge d/2$, and by Corollary \ref{cor: bipartite expander} with $G=G-W'$, there exists a bipartite $(\ep_1,\ep_2d^2)$-expander $H\subseteq G-W'$ with $\delta (H)\ge d/16$. Then by Lemma \ref{lem: small-diameter-lemma}, there exists a shortest cycle $C$ in $H$ of length at most $m/16$ and whose length is denoted by $2r$. We can arbitrarily pick one vertex $v_1\in V(C)$, which has at least $d/20$ neighbours in $H-C$. As $G$ is $C_4$-free, every pair of vertices in $N_{H-C}(v_1)$ has no other common neighbour than $v_1$. Thus $N_{H-C}(N_{H-C}(v_1))$ has size at least $(d/16-2r)d/20\ge d^2/400$. Choose another vertex $v_2\in V(C)$ which is at distance $r-1$ from $v_1$ in $C$. Now we find two vertex-disjoint $(d^2/800,2)$-expansions of $v_1$ and $v_2$ respectively, $F_1$, $F_2$ say, such that every vertex in $N_{F_1}(v_1)$ and $N_{F_2}(v_2)$ has at most $d$ neighbours in the same expansion. Thus $F_1$ and $F_2$ together with $C$ form the $(d^2/800,m/400,1)$-adjuster as desired.

    We will sometimes call an adjuster is \emph{touched} by a path if they intersect in at least one vertex, and \emph{untouched} otherwise. Now we give a claim, which helps us find internally vertex-disjoint short paths connecting a vertex set to many ends of different adjusters.

    \begin{claim}\label{claim: connect candy}
    Given $G$, $m$, $s$ and $\kappa$ as above. Let $x\ge5$ and $X\subseteq V(G)$ be an arbitrary vertex set of size at most $\kappa^2m^{x-1}/2$. Let $B\subseteq G-X$ be a graph with order at least $\kappa^2m^{x}/800$ and $\mathcal{U}$ be a subfamily of vertex-disjoint $(\kappa^2/800,m/400,1)$-adjusters in $G-(X\cup V(B))$ with $\abs{\mathcal{U}}\ge m^{2x}$. Let $\mathcal{P}_B$ be a maximum collection of internally vertex-disjoint paths of length at most $m/8$ in $G-X$, each connecting $V(B)$ to one end from different adjusters in $\mathcal{U}$. Then $V(B)$ can be connected to $1600m^{x+4}$ ends from different adjusters in $\mathcal{U}$ via a subpath from a path in $\mathcal{P}_B$.
    \end{claim}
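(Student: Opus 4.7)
The plan is to argue by contradiction and use the sublinear expansion of $G$ through Lemma~\ref{lem: small-diameter-lemma}. Suppose the set $T\subseteq\mathcal{U}$ of adjusters reached from $V(B)$ via some subpath of a path in $\mathcal{P}_B$ has $|T|<1600m^{x+4}$. Since distinct paths in $\mathcal{P}_B$ are assigned to distinct adjusters and these adjusters lie in $T$, we have $|\mathcal{P}_B|\le|T|$, so the vertex count along $\mathcal{P}_B$ is at most $|T|\cdot(m/8+1)\le 200m^{x+5}$. I will then produce one additional path that could legitimately be added to $\mathcal{P}_B$, contradicting maximality.

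For the construction, let $T':=\bigcup_{A'\in\mathcal{U}\setminus T}(V(F_1^{A'})\cup V(F_2^{A'}))$ be the union of the ends of the untouched adjusters. Since $|\mathcal{U}|\ge m^{2x}$ and each end has $\kappa^2/800$ vertices, one quickly gets $|T'|\ge\kappa^2 m^{2x}/800$. Both $V(B)$ and $T'$ comfortably exceed the threshold $k=\epsilon_2\kappa^2$, so I would apply Lemma~\ref{lem: small-diameter-lemma} to the pair $(V(B),T')$ after deleting $W':=X\cup V(\mathcal{P}_B)$, obtaining a shortest path $P^*$ of length at most $m/8$ in $G-W'$. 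Being shortest, the internal vertices of $P^*$ avoid $V(B)\cup T'$; by construction they also avoid $X$ and every existing path in $\mathcal{P}_B$. The endpoint of $P^*$ lies in an end of some $A'\in\mathcal{U}\setminus T$, and so $\mathcal{P}_B\cup\{P^*\}$ still satisfies the internal-disjointness and distinct-adjuster requirements, violating the maximality of $\mathcal{P}_B$.

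The main obstacle is the parameter bookkeeping in this last step: one must verify that $|W'|\le\kappa^2m^{x-1}/2+200m^{x+5}$ sits below the deletion budget $x'\cdot\epsilon(x')/4$ of Lemma~\ref{lem: small-diameter-lemma} with $x'=\kappa^2m^x/800$, and that the diameter bound $(2/\epsilon_1)\log^3(15n/\epsilon_2\kappa^2)$ does not exceed $m/8$. Both are comfortable consequences of the hierarchy $\kappa^2\ge m^{12}$ together with $m>80\log^4(n/\kappa^2)$, which is exactly the scaling fixed in Section~\ref{subsec: dense} for this regime; modulo these routine estimates the argument is purely structural.
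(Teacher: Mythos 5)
Your argument matches the paper's proof step for step: negate the conclusion, bound the vertex footprint of $\mathcal{P}_B$ by roughly $200m^{x+5}$, collect ends of untouched adjusters as a second large target set, and apply Lemma~\ref{lem: small-diameter-lemma} to extract one more short path contradicting the maximality of $\mathcal{P}_B$. The only cosmetic differences are that the paper deletes just the \emph{internal} vertices of the paths (sidestepping the need to check that $V(B)$ retains its size after removing $W'$) and takes only $m^x$ untouched adjusters rather than all of them; both choices are inessential given $\kappa^2\ge m^{12}$.
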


    \begin{proof}
    Suppose to the contrary that there are less than $1600m^{x+4}$ such ends from different adjusters, and denote by $P$ the set of internal vertices of those paths. Then we have $\abs{P}\le 1600m^{x+4}\cdot m/8=200m^{x+5}$, and there are at least $m^{2x}-1600m^{x+4}$ adjusters in $\mathcal{U}$ untouched by the paths in $\mathcal{P}_B$. Arbitrarily pick $m^x$ adjusters among those adjusters, and let $B'$ be the union of their ends. Then we have $\abs{B'}=m^x\cdot 2\kappa^2/800=\kappa^2m^x/400$.
    As $s\ge 240$, and thus $d\ge m^{60}$, we have
    $$\abs{X\cup P}\le \frac{\kappa^2m^{x-1}}{2}+200m^{x+5}\le \kappa^2m^{x-1}.$$
    Let $y:=\kappa^2m^{x}/800$. By \eqref{unit5}, we have $1/4\cdot\ep(y) y\ge 1/4\cdot\ep(n) y\ge \kappa^2m^{x-1}\ge \abs{X\cup P}$. Then by Lemma \ref{lem: small-diameter-lemma}, there is a path of length at most $m/8$ from $V(B)$ to $V(B')$, avoiding $X\cup P$, that is, this path connects $V(B)$ to an end of one more adjuster in $\mathcal{U}$, contrary to the maximality of $\mathcal{P}_B$.
    \end{proof}

    In order to build a $(D,m/4,1)$-adjuster, we shall first construct many octopuses using those $(\kappa^2/800,m/400,1)$-adjusters we found above. Let $Z$ be the union of the center sets and core vertices of all those adjusters.
    \begin{claim}
    There are $m^5$ $(\kappa^2/800,m/400,800m^4,m/8)$-octopuses $\mathcal{B}_j=(A_j,R_j,\mathcal{D}_j,\mathcal{P}_j)$, $1\le j\le m^5$ in $G-W$ such that the following rules hold.
    \begin{itemize}
        \item[$\mathbf{D1}$] $A_j$ are pairwise disjoint adjusters, $1\le j\le m^5$.
        \item[$\mathbf{D2}$] $A_i\notin \mathcal{D}_j$, $1\le i,j\le m^5$.
        \item[$\mathbf{D3}$] $\mathcal{D}_j$ contains every adjuster other than $A_j$ which intersects at least one path in $\mathcal{P}_j$, $1\le j\le m^5$.
	    \item[$\mathbf{D4}$] Paths in $\mathcal{P}_i$ are vertex-disjoint from $Z$ and $A_j$, except for $i=j$, $1\le i,j\le m^5$.
    	\item[$\mathbf{D5}$] Every two paths from distinct $\mathcal{P}_i$, $\mathcal{P}_j$ are mutually vertex-disjoint, $1\le i<j\le m^5$.
    \end{itemize}
    \end{claim}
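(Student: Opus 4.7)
The plan is to build $\mathcal{B}_1,\dots,\mathcal{B}_{m^5}$ sequentially. Suppose $\mathcal{B}_1,\dots,\mathcal{B}_{j-1}$ have already been constructed obeying $\mathbf{D1}$--$\mathbf{D5}$. I first pick a fresh simple adjuster $A_j$ from the pool of $m^{30}$ pre-built $(\kappa^2/800,m/400,1)$-adjusters, avoiding every adjuster that was used either as a previous core or inside some previous $\mathcal{D}_i$. Since only $1+800m^4$ pool adjusters are consumed at each stage and there are only $m^5$ stages, fewer than $800m^9\ll m^{30}$ adjusters are ever touched, so such a choice exists. I designate one end of $A_j$ as $R_j$.

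Next I define the forbidden set $X_j$ to be the union of $V(\mathcal{B}_1\cup\cdots\cup\mathcal{B}_{j-1})$, the part of $A_j\setminus R_j$, and the center sets together with core vertices of all still-unused pool adjusters (this last piece is what will force the paths I build to avoid $Z$ and the cores of foreign pool adjusters, giving $\mathbf{D4}$). A routine size count, using that each earlier octopus has $O(\kappa^2 m^4)$ vertices and that there are fewer than $m^5$ of them, yields $|X_j|\le \tfrac12\kappa^2 m^{x-1}$ for a suitable value of $x\ge 5$.

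To produce $\mathcal{D}_j$ and $\mathcal{P}_j$ I would apply Claim~\ref{claim: connect candy} with forbidden set $X_j$, with $\mathcal{U}$ the family of still-unused pool adjusters (of size at least $m^{30}/2\gg m^{2x}$), and with $B$ a set containing $R_j$ of size at least $\kappa^2 m^x/800$. The claim then delivers internally vertex-disjoint paths of length at most $m/8$ connecting $V(B)$ to at least $1600m^{x+4}\ge 800m^4$ ends of distinct pool adjusters; I take any $800m^4$ of these to be $\mathcal{D}_j$, and $\mathcal{P}_j$ to be the minimalization of the corresponding subpaths from $R_j$ through $B$ to the chosen ends. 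Condition $\mathbf{D1}$ holds because $A_j$ is freshly chosen, $\mathbf{D2}$ holds because the centers of every $A_i$ with $i\ne j$ sit in $X_j$, $\mathbf{D3}$ is exactly the maximality inherited from Claim~\ref{claim: connect candy}, and $\mathbf{D4},\mathbf{D5}$ hold because the new paths avoid $X_j$, which already contains $Z$ together with all earlier octopuses.

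The hard part is the mismatch between $|R_j|=\kappa^2/800$ and the size $\kappa^2 m^x/800$ with $x\ge 5$ that Claim~\ref{claim: connect candy} requires: since $|R_j|<\ep_2\kappa^2$, even Lemma~\ref{lem: small-diameter-lemma} cannot be invoked directly from $R_j$. My proposed workaround is an iterated bootstrap. Starting from $B_0=R_j$, I would perform a constant number of sub-rounds in which the current $B_i$ absorbs the ends of freshly reached pool adjusters via Lemma~\ref{lem: small-diameter-lemma} (reaching into the exponentially large target $V(\mathcal{U})$), producing $B_{i+1}$ whose size grows by a factor of roughly $m$ while every newly added vertex remains reachable from $R_j$ by a path of length a small fraction of $m/8$. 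After $O(1)$ sub-rounds $|B|$ crosses the claim's threshold, and the final subpaths from $R_j$ to the newly reached adjuster ends still fit within length $m/8$. The delicate technical point is the length budget: the sub-round path segments must be kept much shorter than $m/8$ so that their concatenations with the final claim-supplied segments still obey the length bound and give valid paths for $\mathcal{P}_j$, and the absorbed adjusters must be charged to $\mathcal{D}_j$ consistently with the bookkeeping in $X_j$.
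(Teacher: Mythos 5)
Your reduction to Claim~\ref{claim: connect candy} is a reasonable instinct, and you correctly spot the real obstacle: a single end $R_j$ has size $\kappa^2/800$, which is below the threshold $k=\ep_2\kappa^2$ needed to invoke Lemma~\ref{lem: small-diameter-lemma} (and, correspondingly, below the $\kappa^2m^x/800$ size hypothesis of Claim~\ref{claim: connect candy}). However, the ``iterated bootstrap'' you propose to cure this does not get off the ground: at the very first sub-round you need to connect $B_0=R_j$ to the unused pool via Lemma~\ref{lem: small-diameter-lemma}, but that lemma requires \emph{both} sets to have size at least $k$, and $|R_j|<k$. There is no mechanism to grow $B_0$ until it is already big enough to apply the connecting lemma, so the recursion is circular. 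In addition, committing to a single $A_j$ at the outset forecloses the option of later deciding which adjuster's end gets to be $R_j$, which is precisely the freedom that the actual argument exploits.

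The paper avoids the problem by not fixing the core adjuster $A_{q+1}$ in advance. Instead, it arbitrarily selects $m^6$ unused pool adjusters and takes $B$ to be the union of \emph{all} their ends, giving $|B|=\kappa^2m^6/400$, which comfortably exceeds the $\kappa^2 m^x/800$ threshold with $x=6$. It then applies Claim~\ref{claim: connect candy} once, obtaining $1600m^{10}$ ends of distinct untouched pool adjusters connected to $V(B)$ by short disjoint paths. Averaging over the $2m^6$ ends inside $B$ (pigeonhole) yields one end that receives at least $1600m^{10}/(2m^6)=800m^4$ of these paths; that end is declared $R_{q+1}$, the adjuster owning it is $A_{q+1}$, and $\mathcal{D}_{q+1},\mathcal{P}_{q+1}$ are read off. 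This single application plus pigeonhole replaces your would-be bootstrap, and the forbidden set in the claim (which includes $W$, $Z$, the ends $U$ of prior cores, and prior paths $P$) then gives $\mathbf{D2}$, $\mathbf{D4}$, $\mathbf{D5}$ exactly as you intended. Your accounting that at most $m^5(800m^4+1)\ll m^{30}$ adjusters are consumed, and your identification of what must sit in the forbidden set, are both in line with the paper; the missing idea is ``pick many candidate cores and pigeonhole,'' not ``grow one core.''
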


    \begin{proof}
    We will construct the octopuses iteratively. Suppose that we have constructed $q$ octopuses so far. Let $W_1=W\cup Z$. Let $U$ be the union of the vertex sets of the ends of the core adjusters of octopuses we have constructed. Then we have $\abs{U}\le m^5\cdot 2\kappa^2/800 =\kappa^2m^5/400$.
    Let us call an adjuster \emph{used} if it is used to construct an octopus, and \emph{unused} otherwise. There are at most $m^5\cdot (800m^4+1)$ adjusters are used until now, so there are more than $m^{20}$ unused adjusters.

    Let $P=\cup_{j=1}^{q}V(\mathcal{P}_j)$. Then $\abs{P}\le m/8\cdot 800m^4\cdot m^5\le m^{11}$. Arbitrarily pick a subfamily $\mathcal{B}$ of $m^6$ unused adjusters, and let $B$ be the union of their ends. Then $\abs{B}=m^6\cdot 2\kappa^2/800=\kappa^2m^6/400$. Let $\mathcal{U}$ be the family of unused adjusters, except for the adjusters we picked above. Then we have $\abs{\mathcal{U}}\ge m^{12}$. Note that $\abs{W_1\cup U\cup P}\le (10D+ m^{30}\cdot m/16)+ \kappa^2m^5/400+m^{11}\le \kappa^2m^5/2$ as $d\ge\log^sn\ge m^{60}$. Then by Claim \ref{claim: connect candy} with $(B,\mathcal{U},x,X)=(B,\mathcal{U},6,W_1\cup U\cup P)$, $V(B)$ can be connected to $1600m^{10}$ ends from different adjusters in $\mathcal{U}$ via some internally vertex-disjoint paths of length at most $m/8$ in $G-W_1-U-P$.

    By the pigeonhole principle, there exists an adjuster in $\mathcal{B}$, say $A_{q+1}$, such that $A_{q+1}$ has an end $R_{q+1}$ connected to at least $800m^4$ adjusters, say $\mathcal{D}'_{q+1}$, via a subfamily of internally vertex-disjoint paths, denoted by $\mathcal{P}'_{q+1}$, and then observe that we can choose $\mathcal{D}_{q+1}\subseteq \mathcal{D}'_{q+1}$ of exactly $800m^4$ adjusters and a system of paths $\mathcal{P}_{q+1}$ such that every path is an initial segment of one in $\mathcal{P}'_{q+1}$ and thus $\mathbf{D3}$ holds.
    Denote by $L_{q+1}$ the other end of $A_{p+1}$. As we only use adjusters outside the octopuses which have been constructed during the process, $\mathbf{D1}$ and $\mathbf{D2}$ hold. As we find paths in $\mathcal{P}_{q+1}$ avoiding $W_1\cup U\cup P$, $\mathbf{D4}$ and $\mathbf{D5}$ hold. Thus, $A_{q+1}$,$R_{q+1}$, $\mathcal{D}_{q+1}$ and $\mathcal{P}_{q+1}$ form a $(\kappa^2/800,m/400,800m^4,m/8)$-octopus.
    \end{proof}

    Now we have $m^5$ octopuses $\mathcal{B}_j=(A_j,R_j,\mathcal{D}_j,\mathcal{P}_j)$, $1\le j\le m^5$. Let $B_1$ be the union of $L_j$, $1\le j\le m^5$. Then we have $\abs{B_1}=m^5\cdot \kappa^2/800$. There are at most $m^5\cdot (800m^4+1)$ adjusters used, and thus at least $m^{20}$ adjusters unused. Let $\mathcal{U}'$ be the family of unused adjusters. Reset $P=\bigcup_{j=1}^{m^5}V(\mathcal{P}_j)$, then $\abs{P}\le m^{11}$.
    By definition, inside each $\mathcal{B}_j=(A_j,R_j,\mathcal{D}_j,\mathcal{P}_j)$, $ j\in [m^5]$, every adjuster $A \in \mathcal{D}_j$ intersects $V(\mathcal{P}_j)$ and thus there exists a shortest path in $A$ of length at most $m/400$ connecting a core vertex of $A$ to $V(\mathcal{P}_j)$, and denote by $\mathcal{Q}_j$ the disjoint union of such paths taken over all adjusters in $\mathcal{D}_j$. Let $Q=\cup_{j=1}^{m^5}V(\mathcal{Q}_j)$. Then $\abs{Q}\le m^5\cdot 800m^4\cdot (m/400+1)\le m^{11}$.
    Recall $s\ge 240$ and $d\ge\log^sn\ge m^{60}$, we have $\abs{W_1\cup P\cup Q}\le (10D+ m^{30}\cdot m/16)+m^{11}+m^{11}\le \kappa^2m^4/2$.
    Then by Claim \ref{claim: connect candy} with $(B,\mathcal{U},x,X)=(B_1,\mathcal{U}',5,W_1\cup P\cup Q)$, $V(B_1)$ can be connected to $800m^{9}$ ends from different adjusters in $\mathcal{U}'$ via some internally vertex-disjoint paths of length at most $m/8$ in $G-W_1-P-Q$.

    By the pigeonhole principle, there exists a core adjuster $A_k$ such that $L_{k}$ is connected to a family $\mathcal{D}'_{k}$ of at least $800m^4$ adjusters, via a subfamily of internally vertex-disjoint paths, denote by $\mathcal{P}'_{k}$. Then $A_{k}$, $L_{k}$ $\mathcal{D}'_{k}$ and $\mathcal{P}'_{k}$ form a $(\kappa^2/800,m/400,800m^4,m/8)$-octopus. Note that $(A_{k}, R_{k}, \mathcal{D}_{k}, \mathcal{P}_{k})$ is also a  $(\kappa^2/800,m/400,800m^4,m/8)$-octopus.

    For the adjuster $A_{k}$, denote by $C_{k}$ the center vertex set of $A_{k}$, and note that $L_{k}$, $R_{k}$ are $(\kappa^2/800,m/400)$-expansions of vertices $v_1$, $v_2$ respectively. Let $F_1':=G[V(L_{k})\cup V(\mathcal{P}'_{k})\cup V(\mathcal{D}'_{k})]$, and $F_2'$ be the component of $G[V(R_{k})\cup V(\mathcal{P}_{k})\cup V(\mathcal{D}_{k})]-V(\mathcal{P}'_{k})$ containing $v_2$. Indeed, paths in $\mathcal{P}_{k}$ and $\mathcal{P}'_{k}$ are disjoint from $Z$, and $V(\mathcal{P}_{k})$ and $V(\mathcal{P}'_{k})$ are disjoint. Recall that for every adjuster in $\mathcal{D}_{k}$, every vertex in the ends of the adjuster has at most $\kappa$ neighbours in the same end, except for its core vertices. As $d\ge\log^sn\ge m^{60}$, and $V(\mathcal{P}'_{k})$ is disjoint from $Z$ and $Q$, $F_2'$ has size at least
    $$\abs{V(\mathcal{D}_{k})}-\kappa \abs{V(\mathcal{P}'_{k})} \ge800m^4\cdot \frac{\kappa^2}{800}\cdot 2-\kappa \cdot\frac{m}{8} \cdot 800m^4\ge \kappa^2m^4,$$
    and the distance between $v_2$ and each $v\in V(F_2')$ is at most $m/400+m/8+m/400+m/32+m/400\le m/4$. Then by Proposition \ref{prop: expansion}, there exists a subgraph of $F_2'$, denoted by $F_2$, which is a $(\kappa^2m^4,m/4)$-expansion of $v_2$. Similarly, we can find $F_1$, which is a $(\kappa^2m^4,m/4)$-expansion of $v_1$. Recall that $C_{k}\cup \{v_1,v_2\}$ is an even cycle of length $2r'\le m/16$, and the distance between $v_1$ and $v_2$ on $C_{k}\cup \{v_1,v_2\}$ is $r'-1$. Thus, $(v_1,F_1,v_2,F_2,C_{k})$ is a $(\kappa^2m^4,m/4,1)$-adjuster, and by Proposition \ref{prop: expansion}, there exists a $(D,m/4,1)$-adjuster in $G-W$.
\end{proof}

\subsection{Connecting vertices with paths of desired length}\label{subsec: connect four}

Our goal now is connecting two vertex sets to two expansions with two paths, whose combined length is in the desired range. We first connect one vertex set to an expansion by the following lemma.

\begin{lemma}\label{lem: connect two}
    For any $0<\ep_1,\ep_2<1$, there exists $d_0$ and $K$ such that the following holds for $n\ge d\ge d_0$ and $d\ge \log^sn$. \\
    {\rm(\romannumeral1)} Let $G$ be a $\mathsf{T}K_{\sqrt{d}}^{(2)}$-free n-vertex $(\ep_{1},\ep_{2}d)$-expander with $\delta(G)\ge d$ and $n\ge Kd$. Let $D=dm^4/10^7$. Suppose $U\subseteq V(G)$ is a vertex set with $\abs{U}\ge D$, $F$ is a $(D,m)$-expansion of $v$ in $G-U$, and $W\subseteq V(G)\backslash (U\cup V(F))$ satisfies $\abs{W}\le 2D/\log^3\frac{n}{d}$. Then for any $l\le dm^3$, there is a $v,v'$-path in $G-W$ for some $v'\in U$ of length between $l$ and $l+11m$. \\
	{\rm(\romannumeral2)} Let $G$ be a $C_4$-free n-vertex $(\ep_{1},\ep_{2}d^2)$-expander with $\delta(G)\ge d$ and $n\ge Kd^2$. Let $D=d^2m^4/10^7$. Suppose $U\subseteq V(G)$ is a vertex set with $\abs{U}\ge D$, $F$ is a $(D,m)$-expansion of $v$ in $G-U$, and $W\subseteq V(G)\backslash (U\cup V(F))$ satisfies $\abs{W}\le 2D/\log^3\frac{n}{d^2}$. Then for any $l\le d^2m^3$, there is a $v,v'$-path in $G-W$ for some $v'\in U$ of length between $l$ and $l+11m$.
\end{lemma}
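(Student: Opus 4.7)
The strategy parallels the use of adjusters in Claim~\ref{claim: connect units}. I build a long adjuster inside $G-W-U-V(F)$ via Lemma~\ref{lem: robust adjuster}, connect its two ends to $v$ (through the expansion $F$) and to some $v'\in U$, then tune the total length into $[l,l+11m]$ using the adjuster's arithmetic progression of $v_1,v_2$-path lengths. Cases (\romannumeral1) and (\romannumeral2) run in parallel with the substitution $d\leftrightarrow d^2$.

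First, apply Lemma~\ref{lem: robust adjuster} with forbidden set $W_0:=W\cup U\cup V(F)$ of size $O(D)$ (which fits the hypothesis after absorbing the extra constant into the constants of Lemma~\ref{lem: robust adjuster}) to obtain a $(D,m,r)$-adjuster $\mathcal{A}=(v_1,F_1,v_2,F_2,A)$ in $G-W_0$, for $r$ to be specified below. Then, two applications of Lemma~\ref{lem: small-diameter-lemma} produce shortest paths $Q_1$ from $V(F)$ to $V(F_1)$ and $Q_2$ from $V(F_2)$ to $U$, each of length at most $m$ and each avoiding $W\cup A$ together with all previously constructed vertex sets. The forbidden sets satisfy the size condition of Lemma~\ref{lem: small-diameter-lemma} thanks to the inequality $\ep(n)\ge 8/m$ recorded in \eqref{unit5}. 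Extending $Q_1$ through the $(D,m)$-expansions $F$ and $F_1$ yields a $v,v_1$-path $P_1$ of length at most $3m$; extending $Q_2$ through $F_2$ yields a $v_2,v'$-path $P_2$ with $v'\in U$ of length at most $2m$. Set $L:=l(P_1)+l(P_2)\le 5m$.

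By property $\mathbf{A4}$, for every $i\in\{0,1,\ldots,r\}$ there is a $v_1,v_2$-path $R_i\subseteq G[A\cup\{v_1,v_2\}]$ of length $l(\mathcal{A})+2i$, so the concatenation $P_1\cup R_i\cup P_2$ has length $L+l(\mathcal{A})+2i$. The main task is to choose $r$ and $i$ so that this value lies in $[l,l+11m]$. The natural approach is to mirror the inductive step in the proof of Lemma~\ref{lem: robust adjuster}: start from a single $(D,m/4,1)$-adjuster and append one simple $(D,m/4,1)$-adjuster together with a linking path at each step. From the bounds used there, $l(\mathcal{A}_2)\le 10(m/4)+1$ and $l(Q)\le 3m$, so each step increases $l(\mathcal{A}^{(j)})$ by at most $11m/2+1\le 6m$. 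Iterate until the first $j^*$ for which $L+l(\mathcal{A}^{(j^*)})>l$; by the jump bound, $L+l(\mathcal{A}^{(j^*)})\in(l,l+11m]$, and taking $R_0$ (that is, $i=0$) produces the desired $v,v'$-path. The hypothesis $l\le dm^3$ (resp.\ $l\le d^2m^3$) is precisely what keeps $j^*$ within the adjuster-parameter budget $r_{\max}\le\frac{1}{10}dm^2$ (resp.\ $\frac{1}{10}d^2m^2$) of Lemma~\ref{lem: robust adjuster}.

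The main obstacle is this length-tuning arithmetic: matching a discrete set of adjuster-path lengths (in arithmetic progression of step~$2$) to the narrow target interval $[l,l+11m]$ of width only $11m$. The iterative construction is calibrated so that at the first moment the running total first exceeds $l$, the overshoot is at most $6m$, placing us safely inside the target. A secondary technical point is that the cumulative forbidden set grows as more simple adjusters and linking paths are appended; we must verify at every iteration that this set remains within the size allowed by Lemma~\ref{lem: robust simple adjuster}, which follows since the iteration count is at most $\frac{1}{10}dm^2$ (resp.\ $\frac{1}{10}d^2m^2$) and each step adds only $O(m)$ vertices.
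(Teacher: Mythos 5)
Your approach is genuinely different from the paper's: you pre-build a length-tunable adjuster via Lemma~\ref{lem: robust adjuster} and then connect its ends to $v$ and $U$, whereas the paper runs a path-extension / maximality argument (choose $(P,v_1,F_1)$ with $l(P)$ maximal subject to $l(P)\le l+7m$, show $l(P)\ge l$ by finding a fresh unit via Lemma~\ref{lem: robust unit} and extending, then hook $F_1$ onto $U$ with one more short path). Unfortunately your version has a gap at the very first step that I do not see how to patch within your framework.

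The problem is the forbidden-set budget. Lemma~\ref{lem: robust adjuster} requires $\abs{W}\le D/\log^3\frac{n}{\kappa^2}$, but you feed it $W_0=W\cup U\cup V(F)$, which has size at least $\abs{V(F)}=D$ (indeed $\ge 2D$ once $U$ is included). The discrepancy is a multiplicative factor of $\log^3\frac{n}{\kappa^2}$, which is \emph{not} a constant --- it grows without bound as $n/\kappa^2\to\infty$ --- so it cannot be ``absorbed into the constants of Lemma~\ref{lem: robust adjuster}.'' Nor can you simply omit $V(F)$ from the forbidden set: each end $F_i$ of the adjuster has size exactly $D=\abs{V(F)}$, so the adjuster could swallow $V(F)$ wholesale, and then the claimed extension ``through the $(D,m)$-expansion $F$'' of $Q_1$ to a $v,v_1$-path does not exist. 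The same constraint is structural, not cosmetic: the linking step inside Lemma~\ref{lem: robust adjuster} uses Lemma~\ref{lem: small-diameter-lemma} to join sets of size $2D$, and that lemma only tolerates deletion of roughly $\ep(2D)\cdot D/2=\Theta\bigl(D/\log^2 m\bigr)$ vertices, far less than $D$. The paper avoids this obstacle by \emph{not} using adjusters in this lemma: the object it regrows at each step is a unit, found via Lemma~\ref{lem: robust unit}, which tolerates a forbidden set of size up to $2c\kappa^2 m^4=\Theta(10^5 D)$ --- comfortably more than $\abs{W\cup V(P\cup F_1)}\le 2D$. Units are ``robust'' in a way that large adjusters are not, which is exactly why the paper's machinery distinguishes them.

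Two smaller points worth flagging. First, your length-tuning loop has a mild circularity: you iterate on $j$ ``until $L+l(\mathcal{A}^{(j)})>l$,'' but $L=l(P_1)+l(P_2)$ is only defined after the adjuster (hence its ends $F_1,F_2$) is fixed; this is repairable (iterate on $l(\mathcal{A}^{(j)})$ alone, using $2\le L\le 5m$), but as written it is not quite well-posed. Second, your parity bookkeeping silently assumes that $L+l(\mathcal{A})$ can be made $\ge l$ with the arithmetic-progression slack covering a window of width $11m$; that part is fine because the per-step increment is between $2$ and $\sim 6m$, but it deserves an explicit sentence. Neither of these is the core issue, which remains the forbidden-set size in the application of Lemma~\ref{lem: robust adjuster}.
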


\begin{proof}
	The proof ideas of (\romannumeral1) and (\romannumeral2) are similar, and we choose $\kappa$ as in \eqref{kappa}. Thus $D=\kappa^2m^4/10^7$ and $l\le \kappa^2m^3$.
    Let $K$ be sufficiently large. Let $(P,v_1,F_1)$ be such that $l(P)$ is maximised subject to the following properties.
    \begin{itemize}
	\item[$\mathbf{E1}$] $P$ is a $v,v_1$-path in $G-W$.
	\item[$\mathbf{E2}$] $l(P)\le l+7m$.
	\item[$\mathbf{E3}$] $F_1$ is a $(D,3m)$-expansion of $v_1$ in $G-W$ with $V(F_1)\cap V(P)=\{v_1\}$.
    \end{itemize}
    Note that $P=G[{v_1}]$, $v_1=v$, and $F_1=F$ satisfy $\mathbf{E1}$-$\mathbf{E3}$, therefore such a tuple $(P,v_1,F_1)$ exists.

    We claim that $l(P)\ge l$. Suppose to the contrary that $l(P) < l$. Note that for sufficiently large $K$, we have  $\abs{W\cup V(P\cup F_1)}\le 2D/\log^3\frac{n}{\kappa^2}+D+l\le 2D$. By Lemma \ref{lem: robust unit}, $G-W-V(P\cup F_1)$ contains a $(c\kappa,m^4,c\kappa,2m)$-unit $U'$ with core vertex $v_2$. Note that $\abs{W\cup V(P)}\le 3D/\log^3\frac{n}{\kappa^2}$. By \eqref{unit5}, we have $1/4\cdot\ep(D)D\ge 1/4\cdot\ep(n)D\ge 3D/\log^3\frac{n}{\kappa^2}\ge \abs{W\cup V(P)}$. Then by Lemma \ref{lem: small-diameter-lemma}, there is a path $Q'$ from $V(U')$ to $V(F_1)$ of length at most $m$, avoiding $W\cup V(P)\backslash \{v_1\}$. Without loss of generality we can assume that $Q'$ has endvertices $v'_1\in F_1$ and $v'_2\in U'$. By $\mathbf{E3}$ and the fact that $U'$ is a $(c\kappa,m^4,c\kappa,2m)$-unit, we can extend $Q'$ to a $v_1,v_2$-path $Q$ of length at most $2m+2+m+3m\le 7m$ which is vertex-disjoint from $P-v_1$.
    Using the definition of unit, we can get $F_2\subseteq U'\backslash V(Q)\cup \{v_2\}$ which is a $(D,3m)$-expansion of $v_2$. Let $P_1=P\cup Q$. As $l(Q)\le 7m$, $P_1$ is a $v,v_2$-path of length at least $l(P)+1$ and at most $l(P)+7m$. Then, $(P_1, v_2, F_2)$ satisfies $\mathbf{E1}$-$\mathbf{E3}$ with $l(P_1)>l(P)$, a contradiction. Therefore, $l(P)\ge l$.

    Now, note that $\abs{W\cup V(P_1)}\le 3D/\log^3\frac{n}{\kappa^2}$. By \eqref{unit5}, we have $\ep(D)D/4\ge \ep(n)D/4\ge 3D/\log^3\frac{n}{\kappa^2}\ge \abs{W\cup V(P_1)}$. Then by Lemma \ref{lem: small-diameter-lemma}, there is a path $R$ of length at most $m$, from some $r_1\in U$ to some $r_2\in V(F_1)$ avoiding $W\cup V(P)\backslash \{v_1\}$. Let $Q_1$ be a path from $v_1$ to $r_2$ in $F_1$ of length at most $3m$. Then, $P\cup Q_1\cup R$ is a $v,r_1$-path in $G-W$ of length at least $l(P)\ge l$ and at most $l+7m+3m+m=l+11m$ by $\mathbf{E2}$.
\end{proof}

Combining Lemma \ref{lem: connect two} with Lemma \ref{lem: small-diameter-lemma}, we can prove Lemma~\ref{cor: connect four} as follows.

\begin{proof}[Proof of Lemma~\ref{cor: connect four}.]
	The proof ideas of (\romannumeral1) and (\romannumeral2) are similar, and we choose $\kappa$ as in \eqref{kappa}. Thus $D=\kappa^2m^4/10^7$ and $l\le \kappa^2m^3$.
    Given $G$, $W$, $U_1$ and $U_2$ as stated, we choose $K$ to be sufficiently large. Note that $\abs{U_1\cup U_2}\ge 2D$ and $\abs{V(F_3)\cup V(F_4)}=2D$. By \eqref{unit5}, we have $1/4\cdot\ep(2D)\cdot 2D\ge 1/4\cdot\ep(n)\cdot 2D\ge D/\log^3\frac{n}{\kappa^2}\ge \abs{W}$. Then by Lemma \ref{lem: small-diameter-lemma}, there is a shortest path $P'\subseteq G-W$ from $U_1\cup U_2$ to $V(F_3)\cup V(F_4)$ of length at most $m$. Without loss of generality, we can assume that $P'$ goes from some $v_1\in U_1$ to $v'_3\in V(F_3)$. As $F_3$ is a $(D,m)$-expansion of $v_3$, there is a $v_1,v_3$-path $P$ of length at most $2m$ in $U_1\cup P'\cup F_3$.

    Let $W'=W\cup V(P)$. Then $\abs{W'}\le D/\log^3\frac{n}{\kappa^2}+2m+2\le 2D/\log^3\frac{n}{\kappa^2}$. By Lemma \ref{lem: connect two} with $(U,F,D,m,W,l)=(U_2,F_4,D,m,W',l)$, there is a path $Q$ in $G-W'$ from some $v_2\in U_2$ to $v_4$ of length between $l$ and $l+11m$. As $l\le l(P)+l(Q)\le l+13m$, the paths $P$ and $Q$ are desired.
\end{proof}

\section*{Acknowledgement}
We want to mention that Fern{\'a}ndez, Hyde, Liu, Pikhurko and Wu \cite{FHLPW} obtained a similar result regarding Theorem~\ref{thm: balanced subdivision}. The results in two papers are finished independently.

\begin{appendices}

\section{Dependent random choice}

The following lemma can be regarded as a bipartite version of dependent random choice, of which the proof follows from that of Fox and Sudakov \cite{F-S}.

\begin{lemma}\label{depend}
Given integers $a, t, n_1,n_2, c, r$ and a constant $\alpha>0$, let $G=(V_1,V_2,E)$ be a bipartite graph such that $|V_1|=n_1$, $|V_2|=n_2$ and $\abs{E}\ge \alpha n_1n_2$. If it holds that \[\alpha^tn_1-\binom{n_1}{r}\left(\frac{c}{n_2}\right)^t\ge a,\] then there exists a set $A_0\subseteq V_1$ of size at least $a$ such that every $r$-subset of $A_0$ has at least $c$ common neighbours in $V_2$.
\end{lemma}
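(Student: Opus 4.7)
The plan is the standard \emph{dependent random choice} argument, following Fox and Sudakov \cite{F-S}. I would sample a multiset $T=(v_1,\dots,v_t)$ by choosing each $v_i \in V_2$ independently and uniformly at random (with repetition), and let $A := \bigcap_{i=1}^{t} N_G(v_i) \subseteq V_1$ be the common neighbourhood of $T$. The two random quantities to control are the size of $A$ and the number of \emph{bad} $r$-subsets of $A$, where an $r$-subset $S \subseteq V_1$ is called bad if its common neighbourhood in $V_2$ has fewer than $c$ vertices.

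For the lower bound on $\mathbb{E}[\abs{A}]$, note that $\Pr[u \in A] = (d_G(u)/n_2)^t$ for each $u \in V_1$, and Jensen's inequality applied to the convex map $x \mapsto x^t$ yields
\[
\mathbb{E}[\abs{A}] \;=\; \sum_{u\in V_1}\left(\frac{d_G(u)}{n_2}\right)^{t} \;\geq\; n_1 \left(\frac{\abs{E}}{n_1 n_2}\right)^t \;\geq\; \alpha^t n_1.
\]
For the expected number $X$ of bad $r$-subsets contained in $A$, I would observe that for each bad $S \subseteq V_1$ with common neighbourhood $N_G(S) \subseteq V_2$,
\[
\Pr[S\subseteq A] \;=\; \left(\frac{\abs{N_G(S)}}{n_2}\right)^{t} \;<\; \left(\frac{c}{n_2}\right)^{t},
\]
and summing over the at most $\binom{n_1}{r}$ such subsets yields $\mathbb{E}[X] < \binom{n_1}{r}(c/n_2)^t$.

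By linearity of expectation, $\mathbb{E}[\abs{A} - X] \geq \alpha^t n_1 - \binom{n_1}{r}(c/n_2)^t \geq a$, so some particular outcome of $T$ achieves $\abs{A} - X \geq a$. Fixing such a $T$ and deleting one vertex from each bad $r$-subset contained in $A$ produces a set $A_0 \subseteq A$ with $\abs{A_0} \geq a$ and no bad $r$-subsets, i.e., every $r$-subset of $A_0$ has at least $c$ common neighbours in $V_2$. I do not anticipate any substantive obstacle: the whole argument is a clean first-moment calculation, and the hypothesis $\alpha^t n_1 - \binom{n_1}{r}(c/n_2)^t \geq a$ has been tailored precisely so that $\mathbb{E}[\abs{A} - X]$ meets the required lower bound. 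The only step worth flagging is the convexity estimate for $\mathbb{E}[\abs{A}]$, which needs $t \geq 1$; this is harmless since the lemma is applied with $t$ a positive integer.
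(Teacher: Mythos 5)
Your proposal is correct and follows essentially the same first-moment dependent random choice argument as the paper: sample $t$ vertices of $V_2$ with repetition, bound $\mathbb{E}[\lvert A\rvert]$ from below via Jensen's inequality, bound the expected number of bad $r$-subsets from above, and delete one vertex per bad subset from a good outcome. The only cosmetic difference is that you state the probability of a bad $r$-set landing in $A$ more cleanly as $(\lvert N_G(S)\rvert/n_2)^t < (c/n_2)^t$, whereas the paper phrases the same bound less formally.
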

\begin{proof}
Pick a set of $t$ vertices of $V_2$ uniformly at random with repetition, say $b_1,b_2,\cdots,b_t$. Let
$A$ denote the set of common neighbours for all vertices $b_i$ and $X=|A|$. By linearity of expectation, $$\mathbb{E}(X)=\sum_{v\in V_1}\mathbb{P}(b_1,\cdots,b_t\in N(v))=\sum_{v\in V_1}\left(\frac{d(v)}{n_2}\right)^t.$$
By the convexity of the function $f(x)=x^t$, we have
$$\mathbb{E}(X)\ge\frac{n_1(\sum_{v\in V_1}d(v)/n_1)^t}{n_2^t}\ge\frac{n_1(\alpha n_2)^t}{n_2^t}\ge\alpha^tn_1. $$
Let $Y$ denote the random variable counting the number of $r$-subsets in $A$ with fewer than $c$ common neighbours in $V_2$. Therefore, the probability that a randomly
chosen $b_i$ is one of the common neighbours of such an $r$-set is at most $\frac{c}{n_2}$. Hence, since we made random choices of $b_i$ uniformly and independently, the
probability that such an $r$-tuple be contained in $A$ is at most $\left(\frac{c}{n_2}\right)^t$. As there are at most $\binom{|V_1|}{r}$ subsets of size $r$, it follows that $$\mathbb{E}(Y)\le \binom{|V_1|}{r}\left(\frac{c}{n_2}\right)^t.$$
Again, by linearity of expectation, it holds that $$\mathbb{E}(X-Y)\ge\alpha^tn_1-\binom{n_1}{r}\left(\frac{c}{n_2}\right)^t\ge a.$$
Hence there exists a choice of $A$ for which $X-Y\ge a$. Delete
one vertex from each subset $r$-subset of $A$ with fewer than $c$ common neighbours and let $A_0$ be the remaining subset of $A$. Thus, $|A_0|\ge a$ and every $r$-subset of $A_0$ has at least $c$ common neighbours.
\end{proof}

\end{appendices}


\begin{thebibliography}{99}
	\setlength{\parskip}{0pt}
	\setlength{\itemsep}{0pt plus 0.3ex}
	\footnotesize
	
\bibitem{AKS}
N.~Alon, M.~Krivelevich, B.~Sudakov,
\newblock Tur{\'a}n numbers of bipartite graphs and related Ramsey-type questions.
\newblock \emph{Combinatorics, Probability and Computing}, 12, (2003), 477--494.


\bibitem{BLS}
J.~Balogh, H.~Liu, M.~Sharifzadeh,
\newblock Subdivisions of a large clique in $C_6$-free graphs.
\newblock \emph{Journal of Combinatorial Theory, Series B}, 112, (2015), 18--35.


\bibitem{B-Th}
B.~Bollob{\'a}s, A.~Thomason,
\newblock Proof of a conjecture of {M}ader, {E}rd{\H o}s and {H}ajnal on
topological complete subgraphs.
\newblock \emph{European Journal of Combinatorics}, 19, (1998), 883--887.

\bibitem{E}
P.~Erd\H{o}s,
\newblock Problems and results in graph theory and combinatorial analysis.
\newblock \emph{Graph theory and related topics (Proc. Conf. Waterloo, 1977)}, Academic Press, New York (1979), 153--163.

\bibitem{E-H}
P.~Erd\H{o}s, A.~Hajnal,
\newblock On topological complete subgraphs of certain graphs.
\newblock \emph{Annales Universitatis Scientiarum Budapestinensis de Rolando E{\"o}tv{\"o}s Nominatae Sectio Computatorica}, 7, (1969), 193--199.

\bibitem{FHLPW}
I.~Fern{\'a}ndez, J.~Hyde, H.~Liu, O.~Pikhurko, Z.~Wu,
\newblock Disjoint isomorphic balanced clique subdivisions.
\newblock arXiv preprint, arXiv:2204.12465.

\bibitem{F-S}
J.~Fox, B.~Sudakov,
\newblock Dependent random choice.
\newblock \emph{Random Structures and Algorithms}, 38(1-2), (2011), 68--99.

\bibitem{Jung}
H.A.~Jung,
\newblock Eine {V}erallgemeinerung des {$n$}-fachen {Z}usammenhangs f\"ur {G}raphen.
\newblock \emph{Mathematische Annalen}, 187, (1970), 95--103.

\bibitem{K-O-1}
D.~K{\"u}hn, D.~Osthus,
\newblock Topological minors in graphs of large girth.
\newblock \emph{Journal of Combinatorial Theory, Series B}, 86, (2002), 364--380.

\bibitem{K-O-2}
D.~K{\"u}hn, D.~Osthus,
\newblock Large topological cliques in graphs without a 4-cycle.
\newblock \emph{Combinatorics, Probability and Computing}, 13, (2004), 93--102.

\bibitem{K-O-3}
D.~K{\"u}hn, D.~Osthus,
\newblock Improved bounds for topological cliques in graphs of large girth.
\newblock \emph{SIAM Journal on Discrete Mathematics}, 20, (2006), 62--78.


\bibitem{KST}
T.~K\H{o}v{\'a}ri, V.T.~S{\'o}s, P.Tur{\'a}n,
\newblock On a problem of K.Zarankiewicz.
\newblock \emph{Colloquium Mathematicum}, 3, (1954), 50--57.
	
	
\bibitem{K-Sz-1}
J.~Koml{\'o}s, E.~Szemer{\'e}di,
\newblock Topological cliques in graphs.
\newblock \emph{Combinatorics, Probability and Computing}, 3, (1994), 247--256.

\bibitem{K-Sz-2}
J.~Koml{\'o}s, E.~Szemer{\'e}di,
\newblock Topological cliques in graphs {II}.
\newblock \emph{Combinatorics, Probability and Computing}, 5, (1996), 79--90.


\bibitem{Kuratowski}
J.~Kuratowski,
\newblock Sur le probleme des courbes gauches en topologie.
\newblock \emph{Fundamenta Mathematicae}, 16, (1930), 271--283.


\bibitem{LM1}
H. Liu, R.H. Montgomery,
\newblock A proof of Mader's conjecture on large clique subdivisions in $C_4$-free graphs. \newblock \emph{Journal of the London Mathematical Society}, 95(1), (2017), 203--222.

\bibitem{LM2}
H. Liu, R.H. Montgomery,
\newblock A solution to Erd\H os and Hajnal's odd cycle problem.
\newblock \emph{Journal of the American Mathematical Society}, https://doi.org/10.1090/jams/1018.


\bibitem{Mader-1}
W.~Mader,
\newblock Homomorphieeigenschaften und mittlere Kantendichte von Graphen.
\newblock \emph{Mathematische Annalen}, 174, (1967), 265--268.

\bibitem{Mader-2}
W.~Mader,
\newblock Hinreichende Bedingungen f\H{u}r die Existenz von Teilgraphen, die zu einem vollst\H{a}ndigen Graphen hom\H{o}omorph sind.
\newblock \emph{Mathematische Nachrichten}, 53(1-6), (1972), 145--150.

\bibitem{Mader-3}
W.~Mader,
\newblock An extremal problem for subdivisions of $K_5^-$.
\newblock \emph{Journal of Graph Theory}, 30, (1999), 261--276.


\bibitem{Th-1}
C.~Thomassen,
\newblock Subdivisions of graphs with large minimum degree.
\newblock \emph{Journal of Graph Theory}, 8(1), (1984), 23--28.

\bibitem{Th-2}
C.~Thomassen,
\newblock Problems 20 and 21.
\newblock In \emph{Graphs, Hypergraphs and Applications}, H.Sachs,Ed.:217. Teubner. Leipzig.,1985.

\bibitem{Th-3}
C.~Thomassen,
\newblock Configurations in graphs of large minimum degree, connectivity, or chromatic number.
\newblock \emph{Annals of the New York Academy of Sciences}, 1, (1989), 402--412.

\bibitem{V}
J.~Verstra{\"e}te,
\newblock A note on vertex-disjoint cycles.
\newblock \emph{Combinatorics Probability and Computing}, 1, (2002), 92--102.

\bibitem{WY}
Y.~Wang,
\newblock Balanced subdivisions of a large clique in graphs with high average degree.
\newblock arXiv preprint, arXiv:2107.06583v1.

\end{thebibliography}
\end{document}